\documentclass[a4paper]{article} \usepackage{etoolbox} \newtoggle{author} \toggletrue{author} \newtoggle{arxiv} \toggletrue{arxiv}

\iftoggle{author}{
    \usepackage[utf8]{inputenc}
    \usepackage[T1]{fontenc}
    \usepackage[english]{babel}

    \usepackage{amsmath}
    \usepackage{amssymb}
    \usepackage{amsthm}
    \usepackage{enumitem}
    \usepackage{geometry}
    \usepackage{graphicx}
    \usepackage{hyperref}
    \usepackage{lineno}
    \usepackage{microtype}
    \usepackage{tikz}

    \setlist[itemize]{leftmargin=*}
}{}

\usepackage{placeins}

\iftoggle{author}{\theoremstyle{definition}}{\theoremstyle{thmstyletwo}}
\newtheorem{definition}{Definition}[section]
\newtheorem{assumption}[definition]{Assumption}

\iftoggle{author}{\theoremstyle{plain}}{\theoremstyle{thmstyletwo}}

\newtheorem{lemma}[definition]{Lemma}
\newtheorem{proposition}[definition]{Proposition}
\newtheorem{theorem}[definition]{Theorem}

\iftoggle{author}{\theoremstyle{remark}}{\theoremstyle{thmstyletwo}}
\newtheorem{remark}[definition]{Remark}

\numberwithin{equation}{section}

\iftoggle{author}{
    \iftoggle{arxiv}{}{
        \cspreto{equation*}{\linenomath}
        \cspreto{endequation*}{\endlinenomath}
        \cspreto{align}{\linenomath}
        \cspreto{endalign}{\endlinenomath}
        \cspreto{align*}{\linenomath}
        \cspreto{endalign*}{\endlinenomath}

        \linenumbers
    }
}{}

\newcommand{\MM}{\mathbb{M}}
\newcommand{\NN}{\mathbb{N}}
\newcommand{\PP}{\mathbb{P}}
\newcommand{\RR}{\mathbb{R}}

\newcommand{\bn}{\mathbf{n}}

\newcommand{\cA}{\mathcal{A}}

\newcommand{\cK}{\mathcal{K}}
\newcommand{\cL}{\mathcal{L}}

\newcommand{\cS}{\mathcal{S}}

\newcommand{\<}{\langle}
\renewcommand{\>}{\rangle}

\let\epsilon\undefined
\let\phi\undefined

\DeclareMathOperator{\SO}{SO}

\DeclareMathOperator{\diam}{diam}
\DeclareMathOperator{\dofs}{dofs}

\DeclareMathOperator{\intt}{int}

\DeclareMathOperator{\tr}{tr}

\begin{document}

\iftoggle{author}{}{
    \DOI{DOI HERE}
    \copyrightyear{2026}
    \vol{00}
    \pubyear{2026}
    \access{Advance Access Publication Date: Day Month Year}
    \appnotes{Paper}
    \copyrightstatement{Published by Oxford University Press on behalf of the Institute of Mathematics and its Applications. All rights reserved.}
    \firstpage{1}

}

\iftoggle{author}{
    \title{Virtual element methods for a class of fully nonlinear elliptic PDEs}
}{
    \title[Virtual element methods for a class of fully nonlinear elliptic PDEs]{Virtual element methods for a class of fully nonlinear elliptic PDEs}
}

\iftoggle{author}{
    \author{
        Guillaume Bonnet\footnote{CEREMADE, CNRS, Université Paris--Dauphine, Université PSL, 75016 Paris, France}
        \and
        Andrea Cangiani\footnote{Mathematics Area, International School for Advanced Studies (SISSA), 34136 Trieste, Italy}
        \and
        Andreas Dedner\footnote{Mathematics Institute, University of Warwick, Coventry, CV4 7AL, UK}
        \and
        Ricardo H.~Nochetto\footnote{Department of Mathematics and Institute for Physical Science and Technology, University of Maryland, College Park, MD 20742, USA}
    }
}{
    \author{Guillaume Bonnet
    \address{\orgdiv{CEREMADE}, \orgname{CNRS}, \orgname{Université Paris--Dauphine}, \orgname{Université PSL}, \orgaddress{\postcode{75016} Paris, \country{France}}}}
    \author{Andrea Cangiani*
    \address{\orgdiv{Mathematics Area}, \orgname{International School for Advanced Studies (SISSA)}, \orgaddress{\postcode{34136} Trieste, \country{Italy}}}}
    \author{Andreas Dedner
    \address{\orgdiv{Mathematics Institute}, \orgname{University of Warwick}, \orgaddress{Coventry, \postcode{CV4 7AL}, \country{UK}}}}
    \author{Ricardo H.~Nochetto
    \address{\orgdiv{Department of Mathematics} and \orgdiv{Institute for Physical Science and Technology}, \orgname{University of Maryland}, \orgaddress{College Park, \state{MD} \postcode{20742}, \country{USA}}}}

    \authormark{Guillaume Bonnet, Andrea Cangiani, Andreas Dedner, and Ricardo H.~Nochetto}

    \corresp[*]{Corresponding author: \href{email:acangian@sissa.it}{acangian@sissa.it}}
}

\iftoggle{arxiv}{\date{}}

\iftoggle{author}{}{
    \received{Date}{0}{Year}
    \revised{Date}{0}{Year}
    \accepted{Date}{0}{Year}

}

\iftoggle{author}{\maketitle}{}

\iftoggle{author}{\begin{abstract}}{\abstract}{We study virtual element discretizations of a well-known variational formulation in $H^2$ of Hamilton--Jacobi--Bellman and Isaacs equations with Cordes coefficients. We show that the use of $H^2$ conforming virtual element spaces leads to a relatively simple analysis, bypassing the need for discrete Miranda--Talenti estimates that typically arises when using nonconforming schemes.
We investigate how the polynomial degree of projection operators, especially for lower order terms, affects both the error analysis and robustness of the proposed schemes.
We also show the possibility of imposing weakly the Dirichlet boundary condition, which simplifies the implementation of the method in some virtual element codes. Our results are complemented by numerical experiments in which we compare the convergence of different variants of the scheme for some test problems.}\iftoggle{author}{\end{abstract}}{}

\iftoggle{author}{}{
    \keywords{virtual element method; Hamilton--Jacobi--Bellman equations; Isaacs equations; Cordes condition; error analysis.}


    \maketitle
}


\section{Introduction}

In this work we are concerned with the efficient discretization of  a general class of Isaacs fully nonlinear equations, which encompass the Hamilton--Jacobi--Bellman (HJB) equations as a special case. Fully nonlinear elliptic partial differential equations play a central role in several areas of applied mathematics, including optimal control, differential geometry, differential games, and image processing~\cite{falcone2013,maugeri2000}. The numerical approximation of such problems remains challenging due to their nondivergence structure, strong nonlinearity, and the limited regularity typically available for their solutions~\cite{motzkin1953,debrabant2013,falcone2013,smears2013,smears2014,gallistl2017,neilan2019,wu2021,kawecki2021,gallistl2025,bonnans2004,cai2025}.

We propose and analyze arbitrary-order $H^2$-conforming Virtual Element Methods (VEM) for uniformly elliptic Isaacs equations.  The approach builds upon our previous work on $H^2$-conforming VEM for linear elliptic problems in nondivergence form~\cite{bonnet2025}, thereby extending the methodology to the fully nonlinear setting, and accommodating for the presence of lower order terms.

More specifically,  we  seek approximations of strong solutions of Isaacs equations satisfying the Cordes condition on convex domains. In this setting, well-posedness is a consequence of the Miranda--Talenti inequality, and is established through a variational formulation in $H^2$ (rather than $H^1$). This has been used in the literature to design discontinuous Galerkin~\cite{smears2013,smears2014} and other, mostly nonconforming finite element methods~\cite{gallistl2017,neilan2019,kawecki2021,wu2021,dedner2022nonvar,gallistl2025,cai2025}.

The Cordes condition amounts to uniform ellipticity in dimension two, and is more restrictive in higher dimensions. For completeness, it may be observed that another family of numerical schemes for fully nonlinear elliptic equations are monotone schemes, see e.g.~\cite{bonnans2004,debrabant2013,falcone2013,bonnans2023}. Monotone schemes do not require the Cordes condition or even uniform ellipticity, and approximate viscosity solutions rather than strong solutions; however, their order is limited to two~\cite{motzkin1953}. Accordingly, the two families of methods tend to be favored in different settings.

As already observed in~\cite{gallistl2017,bonnet2025}, in the Cordes setting, the use of $H^2$-conforming discrete spaces enables a straightforward discretization of second-order operators in strong form, since the continuous Miranda--Talenti inequality directly applies at the discrete level.
By contrast, the analysis of nonconforming finite element discretizations requires a suitable discrete analogue of Miranda--Talenti which is typically achieved by the inclusion of appropriate stabilizing terms.

The following topics are the main contributions of this paper.
\begin{itemize}
\item {\bf VEMs for Isaacs equations}.
We propose a family of VEM based on $H^2$-conforming virtual element spaces~\cite{brezzi2013,beirao2014,chen2022conforming}
Our schemes are based on equal-order polynomial projections so as to naturally maintain the structural balance required for stability.
Specifically, for any given target polynomial order $m\ge 2$, we consider the VEM scheme obtained by applying elementwise $L^2$-projections of order $m-j$ with $j\in\{0,1,2\}$ to \emph{all} terms.
%
\item {\bf Monotonicity and error estimates.} We establish, under minimal regularity assumptions, strong monotonicity of the respective discrete nonlinear forms. These monotonicity results hinge on the Cordes condition and hold provided that the classical VEM stabilization term is chosen sufficiently large. The monotonicity is the basis for providing well-posedness and deriving optimal order-regularity error estimates in an $H^2$-type norm.
Our analysis shows that the choice of polynomial order for the underlying VEM projection operators is more delicate for the nonlinear setting than for the linear case~\cite{bonnet2025}. In fact, not all combinations of projection orders are admissible, or  favorable, for the diffusion, advection and reaction terms appearing in the equation.
To illustrate this point, we also investigate a VEM employing non-equal projection orders and show that, in this case, strong monotonicity depends more strongly on the Cordes condition.

\item {\bf Weak imposition of boundary conditions.} We propose and analyze a Nitsche-type boundary penalty method to impose Dirichlet boundary conditions weakly. Since the variational formulation does not rely on integration by parts, weak imposition of the boundary condition is particularly natural and leads to simpler implementation while preserving  the convergence order.  However, the analysis of the resulting scheme is nontrivial and relies on a new variant of the Miranda--Talenti inequality involving a nonstandard trace operator for  $H^2$ functions with nonzero trace~\cite{bernardi2007}.

\item {\bf Numerical experiments.} We complement theory with exhaustive numerical experiments that consider all variants of the VEMs and examine the polynomial order of projection operators as well as strong and weak imposition of boundary conditions. The experiments are performed using the unified implementation provided by the Dune-VEM code~\cite{dedner2022fourthorder,dedner2024}, exploiting the constrained least squares approach providing flexible polynomial projections of any order.
We compare the proposed VEM schemes among themselves and against the discontinuous Galerkin method proposed in~\cite{smears2014}. Our results indicate that  the variant employing the higher-order projections has improved accuracy in the presence of a dominating convective term. The methods with weakly imposed boundary conditions perform comparably to their strongly imposed counterparts, confirming that weak imposition provides a computationally simpler yet equally effective alternative.
We also include a study of the number of iterations required by the Howard algorithm, which is used to solve the discrete nonlinear problems. Our results show that convergence is superlinear in all cases, with comparable number of iterations (typically, 2 to 6 iterations to achieve a tolerance of $10^{-9}$) for the VEM and DG methods.
\end{itemize}

All the $H^2$-conforming VEM variants considered in this work are based directly on the variational formulation obtained by testing the strong form of the PDE with a suitable linear operator, as originally proposed in~\cite{smears2014}. Consequently, no additional  stabilizing terms are introduced beyond the standard VEM stabilization term already required in the linear case. However, the stabilization parameter must be chosen sufficiently large to guarantee monotonicity. A different VEM has been proposed recently in~\cite{cai2025} for HJB equations in two space dimensions. In that work, both conforming and nonconforming methods are constructed 
by augmenting the basic formulation with ad hoc positive forms, so as to enforce monotonicity independently of the Cordes conditions. 
The methods in~\cite{cai2025} employ equal-order projections of order  $m-2$, similarly to one of the variants studied here. The numerical comparisons reported in the final section may also provide useful indications for the behavior of the methods proposed in~\cite{cai2025}, particularly in the convection-dominated case.

Another attractive feature of $H^2$-conforming VEM is the direct availability of the gradient of the discrete solution. This may be advantageous both for the nonlinear solution procedure and for driving adaptive mesh refinement, which is itself facilitated by the ability of the VEM to accommodate general polygonal meshes.

This paper is organized as follows. In section~\ref{sec:model} we introduce the fully nonlinear Isaacs equation along with the Cordes condition on its coefficients. We show a crucial consequence that leads to well-posedness of the variational formulation that arises from testing the equation with a suitable second order linear operator. In section~\ref{sec:vem_framework} we describe the $H^2$-conforming virtual element framework, whereas in section~\ref{sec:scheme} we present three VEMs based on equal-order piece-wise projection operators $\Pi_h^{m-j}$, where $j = 0, 1, 2$ and $m \ge 2$ is the polynomial degree. We analyze the VEMs in section~\ref{sec:analysis}: we define a suitable $H^2$-discrete norm, prove strong monotonicity, existence and uniqueness, and establish an abstract error estimate. We present two variants of the basic schemes in section~\ref{sec:variants}. We first examine the effect of using projection operators with different polynomial degrees, and next study the weak imposition of Dirichlet boundary conditions via a penalty formulation. After this somewhat abstract presentation, we devote section~\ref{sec:realvem} to discussing a specific VEM concrete realization and deriving optimal order-regularity error estimates. We conclude in section~\ref{sec:numerics} with several numerical experiments that explore properties of the proposed VEMs, and compare them among themselves and against the discontinuous Galerkin method of~\cite{smears2014}.

\section{The continuous problem}\label{sec:model}

Let us denote by $\cS_d$ (respectively $\cS_d^{++}$) the set of symmetric (respectively symmetric positive definite) matrices of size $d$. We equip the space $\cS_d$ with the Frobenius norm and inner product.

We look for a strong solution $u \in H^2(\Omega)$ to the Isaacs equation
\begin{equation}
    \label{eq:isaacs}
    \inf_{\alpha \in \Lambda_1} \sup_{\beta \in \Lambda_2}\, [A^{\alpha, \beta}(x) : \nabla^2 u(x) + b^{\alpha, \beta}(x) \cdot \nabla u(x) - c^{\alpha, \beta}(x) u(x) - f^{\alpha, \beta}(x)] = 0 \quad \text{a.e.\ in } \Omega,
\end{equation}
where $\Omega \subset \RR^d$ is a bounded convex polytope, $\Lambda_1$ and $\Lambda_2$ are compact metric spaces, and $A \colon \Lambda_1 \times \Lambda_2 \times \Omega \to \cS_d^{++}$, $b \colon \Lambda_1 \times \Lambda_2 \times \Omega \to \RR^d$, $c \colon \Lambda_1 \times \Lambda_2 \times \Omega \to \RR_+$, and $f \colon \Lambda_1 \times \Lambda_2 \times \Omega \to \RR$ are functions satisfying the Carathéodory conditions, i.e.\ they are measurable with respect to $x \in \Omega$ for every $(\alpha, \beta) \in \Lambda_1 \times \Lambda_2$ and continuous with respect to $(\alpha, \beta) \in \Lambda_1 \times \Lambda_2$ for almost every $x \in \Omega$. We impose the Dirichlet boundary condition
\begin{equation}
    \label{eq:bc}
    u = g \quad \text{a.e.\ on } \partial \Omega,
\end{equation}
for some given $g \in H^2(\Omega)$.

We assume that $A$, $b$, and $c$ satisfy the \emph{Cordes condition}:

\begin{assumption}[Cordes condition]
    \label{assum:cordes}
    Either one of the two following properties holds:
    \begin{itemize}
        \item There exist $\lambda > 0$ and $\varepsilon \in (0, 1]$ such that for any $\alpha \in \Lambda_1$, $\beta \in \Lambda_2$, and $x \in \Omega$, one has
        \begin{equation}
            \label{eq:cordes_low_order_case}
            \frac{|A^{\alpha, \beta}(x)|^2 + |b^{\alpha, \beta}(x)|^2 / (2 \lambda) + (c^{\alpha, \beta}(x) / \lambda)^2}{(\tr A^{\alpha, \beta}(x) + c^{\alpha, \beta}(x) / \lambda)^2} \leq \frac{1}{d + \varepsilon}.
        \end{equation}
        \item There exists $\varepsilon \in (0, 1]$ such that for any $\alpha \in \Lambda_1$, $\beta \in \Lambda_2$, and $x \in \Omega$, one has
        \begin{align}
            \label{eq:cordes_high_order_case}
            \frac{|A^{\alpha, \beta}(x)|^2}{(\tr A^{\alpha, \beta}(x))^2} &\leq \frac{1}{d - 1 + \varepsilon}, &
            b^{\alpha, \beta}(x) &= 0, &
            c^{\alpha, \beta}(x) &= 0.
        \end{align}
        In this case, we let $\lambda := 0$.
    \end{itemize}
\end{assumption}

We will not use the Cordes condition directly, but rather its following consequence.

\begin{proposition}
    \label{prop:cordes_charac}
    There exist $\lambda \geq 0$, $\kappa \in (0, 1]$, and a function $\gamma \colon \Lambda_1 \times \Lambda_2 \times \Omega \to \RR_+^*$, measurable with respect to $x \in \Omega$ for every $(\alpha, \beta) \in \Lambda_1 \times \Lambda_2$ and continuous with respect to $(\alpha, \beta) \in \Lambda_1 \times \Lambda_2$ for almost every $x \in \Omega$, such that, for any $\alpha \in \Lambda_1$, $\beta \in \Lambda_2$, and $x \in \Omega$,
    \begin{equation}
        \label{eq:cordes_ineq_low_order_case}
        \lambda^2 |\gamma^{\alpha, \beta}(x) A^{\alpha, \beta}(x) - I_d|^2 + \lambda |\gamma^{\alpha, \beta}(x) b^{\alpha, \beta}(x)|^2 / 2 + |\gamma^{\alpha, \beta}(x) c^{\alpha, \beta}(x) - \lambda|^2 \leq \lambda^2 (1 - \kappa)^2,
    \end{equation}
    and in the case $\lambda = 0$,
    \begin{align}
        \label{eq:cordes_ineq_high_order_case}
        |\gamma^{\alpha, \beta}(x) A^{\alpha, \beta}(x) - I_d|^2 &\leq (1 - \kappa)^2, &
        b^{\alpha, \beta}(x) &= 0, &
        c^{\alpha, \beta}(x) &= 0.
    \end{align}
\end{proposition}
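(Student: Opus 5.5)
The plan is the standard Smears--Süli argument: choose $\gamma$ as a ratio of traces and minimize a quadratic in $\gamma$, treating the two cases of Assumption~\ref{assum:cordes} separately. Fix $(\alpha,\beta,x)$ and drop the superscripts. Measurability in $x$ and continuity in $(\alpha,\beta)$ of $\gamma$ will follow automatically, because $\gamma$ is an explicit rational expression in $A,b,c$ with a strictly positive denominator. Note also that in the first case of Assumption~\ref{assum:cordes} we have $\lambda>0$, so the first case of the proposition can be stated with $\lambda>0$.

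In the case $\lambda>0$, I would expand the left-hand side of \eqref{eq:cordes_ineq_low_order_case} as a quadratic in $\gamma$:
\[
\lambda^2\Big[\gamma^2\big(|A|^2+|b|^2/(2\lambda)+(c/\lambda)^2\big)-2\gamma\big(\tr A+c/\lambda\big)+d+1\Big].
\]
The natural choice is the minimizer
\[
\gamma=\frac{\tr A+c/\lambda}{|A|^2+|b|^2/(2\lambda)+(c/\lambda)^2}>0,
\]
which is positive since $\tr A>0$ for $A\in\cS_d^{++}$. With $S$ denoting the denominator and $T=\tr A+c/\lambda$, the bracket becomes $d+1-T^2/S$. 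By \eqref{eq:cordes_low_order_case}, $T^2/S\ge d+\varepsilon$, so the bracket is at most $1-\varepsilon$. It then suffices to pick $\kappa\in(0,1]$ with $(1-\kappa)^2=1-\varepsilon$, namely $\kappa=1-\sqrt{1-\varepsilon}$.

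In the case $\lambda=0$, inequality \eqref{eq:cordes_ineq_low_order_case} reads $0\le0$ provided $c=0$, which holds here. I would take $\gamma=\tr A/|A|^2$. Then $|\gamma A-I_d|^2=d-(\tr A)^2/|A|^2\le d-(d-1+\varepsilon)=1-\varepsilon$ by \eqref{eq:cordes_high_order_case}, and the same $\kappa$ works. The conditions $b=0$ and $c=0$ are inherited directly from the assumption.

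I expect no real obstacle here; the only care needed is the algebraic identity for the minimized quadratic and checking the positivity and regularity of $\gamma$. Positivity of the denominators follows from $|A|>0$, and continuity holds because compositions of continuous and measurable maps with nonvanishing denominators remain Carathéodory.
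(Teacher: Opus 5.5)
Your proposal is correct and matches the paper's proof: the paper also takes $\kappa := 1 - \sqrt{1-\varepsilon}$ and $\gamma$ as in \eqref{eq:gamma}, which is the minimizer of the quadratic in $\gamma$, and simply states that the verification is ``easily verified''. Your computations supply exactly those omitted details: the expansion $\lambda^2[\gamma^2 S - 2\gamma T + d + 1]$ with minimum value $d+1-T^2/S \le 1-\varepsilon$ when $\lambda>0$, and $|\gamma A - I_d|^2 = d - (\tr A)^2/|A|^2 \le 1-\varepsilon$ when $\lambda = 0$.
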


\begin{proof}
    It is easily verified that when letting $\kappa := 1 - \sqrt{1 - \varepsilon}$ and
    \begin{equation}
        \label{eq:gamma}
        \gamma^{\alpha, \beta}(x) := \begin{cases}
            \left(\tr A^{\alpha, \beta}(x) + \frac{c^{\alpha, \beta}(x)}{\lambda}\right) / \left(|A^{\alpha, \beta}(x)|^2 + \frac{|b^{\alpha, \beta}(x)|^2}{2 \lambda} + \frac{c^{\alpha, \beta}(x)^2}{\lambda^2}\right) &\text{if } \lambda > 0, \\
            \tr A^{\alpha, \beta}(x) / |A^{\alpha, \beta}(x)|^2 &\text{if } \lambda = 0,
        \end{cases}
    \end{equation}
    the inequalities \eqref{eq:cordes_ineq_low_order_case} and \eqref{eq:cordes_ineq_high_order_case} reduce respectively to \eqref{eq:cordes_low_order_case} and \eqref{eq:cordes_high_order_case}.
\end{proof}

\begin{remark}
    For any $\alpha \in \Lambda_1$, $\beta \in \Lambda_2$, and $x \in \Omega$, the quantity $\gamma^{\alpha, \beta}(x)$ defined in \eqref{eq:gamma} is minimizing the left-hand side of \eqref{eq:cordes_ineq_high_order_case} if $\lambda > 0$, and is minimizing the left-hand side of the first inequality in \eqref{eq:cordes_ineq_low_order_case} if $\lambda = 0$, as can be verified by writing the relevant first-order optimality condition. Therefore, the statement of Proposition~\ref{prop:cordes_charac} is actually a characterization of the Cordes condition. The presence of the additional conditions \eqref{eq:cordes_ineq_high_order_case} in the case $\lambda = 0$ can be interpreted as follows: if $\lambda > 0$, then \eqref{eq:cordes_ineq_low_order_case} implies that $|\gamma^{\alpha, \beta}(x) A^{\alpha, \beta}(x) - I_d|^2 \leq (1 - \kappa)^2$, $|\gamma^{\alpha, \beta}(x) b^{\alpha, \beta}(x)|^2 / 2 \leq \lambda (1 - \kappa)^2$, and $|\gamma^{\alpha, \beta}(x) c^{\alpha, \beta}(x) - \lambda|^2 \leq \lambda^2 (1 - \kappa)^2$; \eqref{eq:cordes_ineq_high_order_case} results from letting $\lambda$ approach zero in these inequalities.
\end{remark}

From now on, we let $\lambda \geq 0$, $\kappa \in (0, 1]$, and $\gamma \colon \Lambda_1 \times \Lambda_2 \times \Omega \to \RR$ be as in Proposition~\ref{prop:cordes_charac}. In practice, the function $\gamma$ may be chosen according to \eqref{eq:gamma}, although we also allow other choices of functions $\gamma$ as long as the inequalities \eqref{eq:cordes_ineq_low_order_case} and \eqref{eq:cordes_ineq_high_order_case} are satisfied.

One consequence of \eqref{eq:cordes_ineq_low_order_case} and \eqref{eq:cordes_ineq_high_order_case} is that the functions
\begin{align*}
    x &\mapsto \sup_{\substack{\alpha \in \Lambda_1 \\ \beta \in \Lambda_2}}\, \gamma^{\alpha, \beta}(x) |A^{\alpha, \beta}(x)|, &
    x &\mapsto \sup_{\substack{\alpha \in \Lambda_1 \\ \beta \in \Lambda_2}}\, \gamma^{\alpha, \beta}(x) |b^{\alpha, \beta}(x)|, &
    x &\mapsto \sup_{\substack{\alpha \in \Lambda_1 \\ \beta \in \Lambda_2}}\, \gamma^{\alpha, \beta}(x) c^{\alpha, \beta}(x),
\end{align*}
all belong to $L^\infty(\Omega)$. We need to assume a similar condition on the source term $f$:

\begin{assumption}[source term]
    The function $x \mapsto \sup_{\alpha \in \Lambda_1,\, \beta \in \Lambda_2}\, \gamma^{\alpha, \beta}(x) |f^{\alpha, \beta}(x)|$ belongs to $L^2(\Omega)$.
\end{assumption}

In this setting, the analysis of the Dirichlet problem can be performed by relying on an equivalent variational formulation. In order to describe it, let us first define, for any $v \in H^2(\Omega)$, the pointwise operators
\begin{align}
    \label{eq:pointwise_operators}
    \begin{split}
        \cL^{\alpha, \beta} v &:= A^{\alpha, \beta} : \nabla^2 v + b^{\alpha, \beta} \cdot \nabla v - c^{\alpha, \beta} v, \\
        F[v] &:= \inf_{\alpha \in \Lambda_1} \sup_{\beta \in \Lambda_2}\, [\cL^{\alpha, \beta} v - f^{\alpha, \beta}],
    \end{split} &
    \begin{split}
        \cL_\lambda v &:= \Delta v - \lambda v, \\
        F_\gamma[v] &:= \inf_{\alpha \in \Lambda_1} \sup_{\beta \in \Lambda_2}\, [\gamma^{\alpha, \beta} (\cL^{\alpha, \beta} v - f^{\alpha, \beta})]
    \end{split}
\end{align}
(in our analysis, we will also apply these operators to functions belonging to some broken $H^2$ space; we naturally extend their definitions to this case). Then the Isaacs equation \eqref{eq:isaacs} can be written as $F[u] = 0$ a.e.\ in $\Omega$, and is equivalent to $F_\gamma[u] = 0$ a.e.\ in $\Omega$. Moreover, it is easily verified that $F_\gamma[v] \in L^2(\Omega)$ for any $v \in H^2(\Omega)$.

Let us then introduce the operator $\cA \colon H^2(\Omega) \to (H^2(\Omega))^*$ defined by
\begin{equation}
    \label{eq:cA}
    \<\cA(v), w\> := \int_\Omega F_\gamma[v] \cL_\lambda w\, d x,
\end{equation}
and the spaces
\begin{align*}
    V^g &:= \{v \in H^2(\Omega) \mid v_{|\partial \Omega} = g_{|\partial \Omega}\}, &
    V^0 &:= H^2(\Omega) \cap H_0^1(\Omega) = \{v \in H^2(\Omega) \mid v_{|\partial \Omega} = 0\}.
\end{align*}
Then the problem \eqref{eq:isaacs}, with the boundary condition \eqref{eq:bc}, is equivalent to the variational problem
\begin{equation}
    \label{eq:var}
    \text{find } u \in V^g \text{ s.t.\ } \forall v \in V^0,\, \<\cA(u), v\> = 0.
\end{equation}
The equivalence follows from the fact that $\{\cL_\lambda v \mid v \in V^0\} = L^2(\Omega)$, so testing against $\cL_\lambda v$ for any $v \in V^0$ amounts to testing against any $L^2$ function.

Under our assumptions, it is well-known that there exists a unique solution $u \in V^g$ to \eqref{eq:var}. One may refer to \cite{kawecki2021} for a proof, 
or, equivalently, follow the sketch of the proof that we use in section~\ref{sec:analysis} for the well-posedness of our discrete variational problem \eqref{eq:scheme}.

\section{Virtual element framework}
\label{sec:vem_framework}

In this section, we describe the $H^2$ conforming virtual element framework that we will use in section~\ref{sec:scheme} as a basis for our discretization of the variational problem \eqref{eq:var}.

We assume that we are given a polytopal mesh $\cK_h$ discretizing $\Omega$, and an arbitrary polynomial consistency order $m \geq 2$. We associate some characteristic length $h_F > 0$ to each $d'$-dimensional face $F$ of each cell $K \in \cK_h$, for $0 \leq d' \leq d$, thus including $K$ itself, and assume that the following standard shape-regularity assumption holds.

\begin{assumption}[shape-regularity]\label{ass:reg}
    There exists $\rho \in (0, 1)$ such that any $d'$-dimensional face $F$ of any $K \in \cK_h$, with $0 \leq d' \leq d$, is star-shaped with respect to a $d'$-dimensional ball of diameter $\rho \diam(K)$, and its characteristic length satisfies $\rho \diam(K) \leq h_F \leq \rho^{-1} \diam(K)$.
\end{assumption}

From now on, for any two quantities $a_1$, $a_2 \in \RR$, we write $a_1 \lesssim a_2$ (respectively $a_1 \gtrsim a_2$) if there exists some constant $C > 0$, depending only on $d$, $\Omega$, the polynomial consistency order $m$, and the shape-regularity parameter $\rho$, such that $a_1 \leq C a_2$ (respectively $a_1 \geq C a_2$). We write $a_1 \simeq a_2$ if both $a_1 \lesssim a_2$ and $a_1 \gtrsim a_2$.

We let $\Omega_h := \bigcup_{K \in \cK_h} \intt(K)$, so as to be able to work with the broken $H^2$ space $H^2(\Omega_h)$. Observe that $H^2(\Omega) \subset H^2(\Omega_h) \subset L^2(\Omega)$.

For any polytope $\omega$ (be it of dimension $d$ or less) and any $k \in \NN$, we denote by $\Pi_\omega^k$ the $L^2$ projection operator onto the space $\PP_k(\omega)$ of polynomials of degree $k$ on $\omega$. For any $k \in \NN$, we define the space
\begin{equation*}
    \PP_{k, h}(\Omega) := \{p \in L^2(\Omega) \mid \forall K \in \cK_h,\, p_{|K} \in \PP_k(K)\}
\end{equation*}
of discontinuous piecewise polynomial functions of degree $k$ on the mesh $\cK_h$, and denote by $\Pi_h^k$ the $L^2$ projection operator onto $\PP_{k, h}(\Omega)$. We naturally extend the definitions of the projection operators $\Pi_\omega^k$ and $\Pi_h^k$ to the cases of vector- and matrix-valued functions.

We assume that we are given:
\begin{itemize}
    \item A finite-dimensional linear subspace $V_h \subset H^2(\Omega)$; we then let $V_h^0 := V_h \cap H_0^1(\Omega)$.
    \item A function $g_I \in V_h$, to be interpreted as some interpolant of the function $g$ from the Dirichlet boundary condition \eqref{eq:bc}; we then let $V_h^g := g_I + V_h^0$.
    \item Symmetric bilinear forms $s_{0, h} \colon (V_h + \PP_{m, h}(\Omega))^2 \to \RR$ and $s_{2, h} \colon (V_h + \PP_{m, h}(\Omega))^2 \to \RR$.
\end{itemize}

In order to accommodate for the several variants of the $H^2$ conforming virtual element method described in the literature and implemented in computer codes, we do not prescribe a specific construction of $V_h$, $g_I$, $s_{0, h}$, and $s_{2, h}$, but only impose that the following assumptions hold. However, later on in section~\ref{sec:realvem} we shall provide the concrete realization of the $H^2$ conforming framework which is tested with a series of numerical examples in section~\ref{sec:numerics}.

\begin{assumption}[inverse inequalities]
    \label{assum:inverse_ineq}
    For any $v \in V_h$, cell $K \in \cK_h$, and side $s$ of $K$,
    \begin{align}
        \label{eq:inverse_ineq_cell}
        |v|_{j, K} &\lesssim h_K^{i-j} |v|_{i, K}, \\
        \label{eq:inverse_ineq_side}
        |v|_{j, s} &\lesssim h_s^{i-j} \|v\|_{i, s},
    \end{align}
for $0\le i<j\le 2$.
\end{assumption}

\begin{assumption}[stabilization form scaling]
    \label{assum:stab_scaling}
    For any $v \in V_h + \PP_{m, h}(\Omega)$, one has
    \begin{equation}
        \label{eq:stab_scaling0}
        s_{0, h}(v, v) \eqsim \|v\|_0^2.
    \end{equation}
    If $\Pi_h^1 v = 0$, then one also has
    \begin{equation}
        \label{eq:stab_scaling2}
        s_{2, h}(v, v) \eqsim |v|_2^2.
    \end{equation}
\end{assumption}

\begin{remark}[requirement for inverse inequalities]
    We will only need the inverse inequalities \eqref{eq:inverse_ineq_cell} and \eqref{eq:inverse_ineq_side} in section~\ref{subsec:nitsche}, and not in the analysis of our main numerical scheme \eqref{eq:scheme}.
\end{remark}

One important feature of the virtual element method is that functions of $V_h$ are typically \emph{virtual}, in the sense that their pointwise values cannot be efficiently computed from the values of the degrees of freedom that characterize them. Rather, we only assume the efficient computability of the discontinuous piecewise polynomial functions $\Pi_h^{m-j} v$, $\Pi_h^{m-j} \nabla v$, and $\Pi_h^{m-j} \nabla^2 v$, for $v \in V_h$ and some $j\in\{0,1,2\}$, and of $s_{0, h}(v, w)$ and $s_{2, h}(v, w)$, for $v$, $w \in V_h + \PP_{m, h}(\Omega)$.

\begin{remark}[degrees of the projections]
    Choosing the degree $m$ for all projections is nonstandard: usually, for $H^2$ conforming virtual element functions, only the efficient computability of $\Pi_h^{m-1} \nabla v$ and $\Pi_h^{m-2} \nabla^2 v$ is assumed.
However, as show below in section~\ref{sec:numerics}, all projections mentioned above can be computed exploiting the constrained least squares approach from~\cite{dedner2022fourthorder,dedner2024}.
\end{remark}

\section{The virtual element scheme}
\label{sec:scheme}

Since the quantity $\<\cA(v), w\>$ defined in \eqref{eq:cA} is typically not efficiently computable from the values of the degrees of freedom associated to functions $v \in V_h^g$ and $w \in V_h^0$, we need to design a variant of the operator $\cA$ suitable for use in our virtual element scheme based on computable polynomial projections. We actually propose a family of methods depending on the order of such projections.

First,  we define, for $v \in H^2(\Omega_h)$, the following counterparts to the operators $\cL^{\alpha, \beta}$, $\cL_\lambda$, and $F_\gamma$ from \eqref{eq:pointwise_operators}:
\begin{equation}
    \label{eq:projected_pointwise_operators}
    \begin{split}
        \cL_h^{\alpha, \beta} v &:= A^{\alpha, \beta} : \Pi_h^{m-j} \nabla^2 v + b^{\alpha, \beta} \cdot \Pi_h^{m-j} \nabla v - c^{\alpha, \beta} \Pi_h^{m-j} v, \\
        \cL_{\lambda, h} v &:= \Pi_h^{m-j} (\Delta v - \lambda v), \\
        F_{\gamma, h}[v] &:= \inf_{\alpha \in \Lambda_1} \sup_{\beta \in \Lambda_2}\, [\gamma^{\alpha, \beta} (\cL_h^{\alpha, \beta} v - f^{\alpha, \beta})],
    \end{split}
\end{equation}
for some given $j\in\{0,1,2\}$, and where $\nabla$, $\nabla^2$, and $\Delta$ denote respectively the broken gradient, Hessian, and Laplacian. Here $j\in\{0,1,2\}$ is a parameter of the method that can be chosen freely, and will slightly affect our error estimate (see Theorem~\ref{thm:error_estimate}).

We then define operators $\cA_h^c$, $\cA_h^s$, and $\cA_h$ from $V_h + \PP_{m, h}(\Omega)$ to $(V_h)^*$ by letting, for $v \in V_h + \PP_{m, h}(\Omega)$ and $w \in V_h$,
\begin{align*}
    \<\cA_h^c(v), w\> &:= \int_\Omega F_{\gamma, h}[v] \cL_{\lambda, h} w\, d x, \\
    \<\cA_h^s(v), w\> &:= s_{2, h}(v - \Pi_h^m v, w - \Pi_h^m w) + \lambda^2 s_{0, h}(v - \Pi_h^{m-j} v, w - \Pi_h^{m-j} w), \\
    \<\cA_h(v), w\> &:= \<\cA_h^c(v), w\> + \eta \<\cA_h^s(v), w\>,
\end{align*}
where $\eta > 0$ is some given stabilization factor.

\begin{remark}
    In the definition of our numerical scheme, we only need to apply the operators $\cA_h^c$, $\cA_h^s$, and $\cA_h$ to functions of $V_h$, but it will be useful in our analysis that they are also defined for functions of $V_h + \PP_{m, h}(\Omega)$.
\end{remark}

We consider the virtual element scheme (implicitly depending on $m$, $j$, $\eta$):
\begin{equation}
    \label{eq:scheme}
    \text{find } u_h \in V_h^g \text{ s.t.\ } \forall v \in V_h^0,\, \<\cA_h(u_h), v\> = 0.
\end{equation}

\section{Analysis of the scheme}
\label{sec:analysis}

In this section, we prove that the existence of a unique solution to the scheme \eqref{eq:scheme} (Theorem~\ref{thm:existence_uniqueness}) and an error estimate for this solution (Theorem~\ref{thm:error_estimate}), provided that the stabilization factor $\eta$ is large enough.

We will rely in our analysis on seminorms $|\cdot|_{*, \lambda}$ and $|\cdot|_{*, \lambda, h}$, that we define respectively on $H^2(\Omega)$ and $H^2(\Omega_h)$ by
\begin{align*}
    |v|_{*, \lambda}^2 &:= |v|_2^2 + 2 \lambda |v|_1^2 + \lambda^2 \|v\|_0^2, &
    |v|_{*, \lambda, h}^2 &:= |v|_{2, h}^2 + 2 \lambda |v|_{1, h}^2 + \lambda^2 \|v\|_0^2,
\end{align*}
noting that these are norms if $\lambda>0$.

Following the literature on nondivergence form elliptic equations with Cordes coefficients, we interpret the nonlinear nondivergence form operator $F_{\gamma, h}$ as a perturbation of the linear, divergence form operator $\cL_{\lambda, h}$. This introduces an error that can be controlled as follows.

\begin{lemma}[linear approximation]
    For any $v_1$, $v_2 \in H^2(\Omega)$,
    \begin{equation}
        \label{eq:linear_approximation}
        \|F_{\gamma, h}[v_1] - F_{\gamma, h}[v_2] - \cL_{\lambda, h} (v_1 - v_2)\|_0 \leq (1 - \kappa) |v_1 - v_2|_{*, \lambda}.
    \end{equation}
\end{lemma}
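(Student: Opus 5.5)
The proof reduces the nonlinear difference to a linear one pointwise, then bounds that linear expression with the Cordes inequality of Proposition~\ref{prop:cordes_charac}, and finally integrates using the $L^2$-stability of $\Pi_h^{m-j}$. Set $w := v_1 - v_2$.

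\textbf{Step 1: remove the nonlinearity.} For real families $(a^{\alpha,\beta})$ and $(b^{\alpha,\beta})$ one has the elementary inequality $|\inf_\alpha \sup_\beta a^{\alpha,\beta} - \inf_\alpha \sup_\beta b^{\alpha,\beta}| \le \sup_{\alpha,\beta} |a^{\alpha,\beta} - b^{\alpha,\beta}|$. The maps $\cL_h^{\alpha,\beta}$ are linear and the $f^{\alpha,\beta}$ terms cancel. Hence, a.e.\ in $\Omega$,
\begin{equation*}
|F_{\gamma,h}[v_1] - F_{\gamma,h}[v_2] - \cL_{\lambda,h} w| \le \sup_{\alpha,\beta} |\gamma^{\alpha,\beta}\cL_h^{\alpha,\beta} w - \cL_{\lambda,h} w|.
\end{equation*}
Here we use that $\cL_{\lambda,h} w$ does not depend on $(\alpha,\beta)$, so it passes through the $\inf\sup$ as a shift. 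Measurability of the supremum follows from the Carathéodory conditions and compactness of $\Lambda_1\times\Lambda_2$ (one may restrict to a countable dense subset).

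\textbf{Step 2: pointwise Cordes bound.} Write $H := \Pi_h^{m-j}\nabla^2 w$, $g := \Pi_h^{m-j}\nabla w$ and $p := \Pi_h^{m-j} w$. The $L^2$ projection commutes with taking the trace of a matrix, so $\Pi_h^{m-j}\Delta w = \tr H = I_d : H$. This gives
\begin{equation*}
\gamma\cL_h^{\alpha,\beta} w - \cL_{\lambda,h} w = (\gamma A - I_d):H + \gamma b\cdot g - (\gamma c - \lambda)p .
\end{equation*}

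If $\lambda>0$, rewrite the right-hand side as
\begin{equation*}
\lambda(\gamma A - I_d):\lambda^{-1}H + \bigl(\sqrt{\lambda/2}\,\gamma b\bigr)\cdot\bigl(\sqrt{2/\lambda}\,g\bigr) - (\gamma c-\lambda)p .
\end{equation*}
Apply the Cauchy--Schwarz inequality in $\RR^{d\times d}\times\RR^d\times\RR$ and then \eqref{eq:cordes_ineq_low_order_case}. The modulus is bounded by
\begin{equation*}
\lambda(1-\kappa)\bigl(\lambda^{-2}|H|^2 + 2\lambda^{-1}|g|^2 + |p|^2\bigr)^{1/2} = (1-\kappa)\bigl(|H|^2 + 2\lambda|g|^2 + \lambda^2|p|^2\bigr)^{1/2}.
\end{equation*}
If $\lambda=0$, then $b=c=0$ by \eqref{eq:cordes_ineq_high_order_case}, and Cauchy--Schwarz directly gives the bound $(1-\kappa)|H|$, which is the same formula with $\lambda=0$. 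The right-hand side is independent of $(\alpha,\beta)$, so it also bounds the supremum from Step 1.

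\textbf{Step 3: integrate.} Square the pointwise bound and integrate over $\Omega$. Then use that $\Pi_h^{m-j}$ is an $L^2$-orthogonal projection, hence a contraction in $L^2$, applied componentwise. This gives $\|H\|_0 \le |w|_{2}$, $\|g\|_0\le |w|_1$ and $\|p\|_0\le\|w\|_0$. Since $w\in H^2(\Omega)$, the broken derivatives coincide with the global ones, and the sum is exactly $(1-\kappa)^2|w|_{*,\lambda}^2$. Taking square roots yields \eqref{eq:linear_approximation}.

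The main point requiring care is Step 2. One must choose the weights in the Cauchy--Schwarz inequality so that they match exactly the combination in \eqref{eq:cordes_ineq_low_order_case} and in $|\cdot|_{*,\lambda}$. One must also check that the projection acts on $\Delta w - \lambda w$ in a way consistent with $\tr\Pi_h^{m-j}\nabla^2 w - \lambda\Pi_h^{m-j} w$, which holds because the projection is applied to every term with the same degree $m-j$. The rest is routine.
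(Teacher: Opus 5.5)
Your proof is correct and follows the paper's argument essentially step for step. You reduce the $\inf$–$\sup$ difference to a supremum of linear residuals, and apply a Cauchy--Schwarz inequality weighted to match the Cordes inequality, with the cases $\lambda=0$ and $\lambda>0$ handled separately. You then conclude via the $L^2$-stability of $\Pi_h^{m-j}$.
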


\begin{proof}
    Let $w := v_1 - v_2$. One has
    \begin{align*}
        &\|F_{\gamma, h}[v_1] - F_{\gamma, h}[v_2] - \cL_{\lambda, h} (v_1 - v_2)\|_0 \\
        &= \Big\|\inf_{\alpha_1 \in \Lambda_1} \sup_{\beta_1 \in \Lambda_2}\, [\gamma^{\alpha_1, \beta_1} (\cL_h^{\alpha_1, \beta_1} v_1 - f^{\alpha_1, \beta_1}) - \cL_{\lambda, h} v_1] \\
        &\qquad - \inf_{\alpha_2 \in \Lambda_1} \sup_{\beta_2 \in \Lambda_2}\, [\gamma^{\alpha_2, \beta_2} (\cL_h^{\alpha_2, \beta_2} v_2 - f^{\alpha_2, \beta_2}) - \cL_{\lambda, h} v_2]\Big\|_0 \\
        &\leq \Big\|\sup_{\substack{\alpha \in \Lambda_1 \\ \beta \in \Lambda_2}}\, |\gamma^{\alpha, \beta} \cL_h^{\alpha, \beta} w - \cL_{\lambda, h} w|\Big\|_0 \\
        &= \Big\|\sup_{\substack{\alpha \in \Lambda_1 \\ \beta \in \Lambda_2}}\, \big|(\gamma^{\alpha, \beta} A^{\alpha, \beta} - I_d) : \Pi_h^{m-j} \nabla^2 w + \gamma^{\alpha, \beta} b^{\alpha, \beta} \cdot \Pi_h^{m-j} \nabla w - (\gamma^{\alpha, \beta} c^{\alpha, \beta} - \lambda) \Pi_h^{m-j} w\big|\Big\|_0.
    \end{align*}
    If $\lambda = 0$, we conclude using the Cordes properties \eqref{eq:cordes_ineq_high_order_case}:
    \begin{equation*}
        \|F_{\gamma, h}[v_1] - F_{\gamma, h}[v_2] - \cL_{\lambda, h} (v_1 - v_2)\|_0
        \leq (1 - \kappa) \|\Pi_h^{m-j} \nabla^2 w\|_0
        \leq (1 - \kappa) \|\nabla^2 w\|_0
        = (1 - \kappa) |w|_{*, \lambda}.
    \end{equation*}
    If $\lambda > 0$, we use the Cauchy-Schwarz inequality and then the Cordes inequality \eqref{eq:cordes_ineq_low_order_case}:
    \begin{align*}
        &\|F_{\gamma, h}[v_1] - F_{\gamma, h}[v_2] - \cL_{\lambda, h} (v_1 - v_2)\|_0 \\
        &\leq \Big\|\sup_{\substack{\alpha \in \Lambda_1 \\ \beta \in \Lambda_2}}\, \Big(|\gamma^{\alpha, \beta} A^{\alpha, \beta} - I_d|^2 + \frac{1}{2 \lambda} |\gamma^{\alpha, \beta} b^{\alpha, \beta}|^2 + \frac{1}{\lambda^2} |\gamma^{\alpha, \beta} c^{\alpha, \beta} - \lambda|^2\Big)^{1/2} \\
        &\qquad \qquad \left(|\Pi_h^{m-j} \nabla^2 w|^2 + 2 \lambda |\Pi_h^{m-j} \nabla w|^2 + \lambda^2 |\Pi_h^{m-j} w|^2\right)^{1/2}\Big\|_0 \\
        &\leq (1 - \kappa) (\|\Pi_h^{m-j} \nabla^2 w\|_0^2 + 2 \lambda \|\Pi_h^{m-j} \nabla w\|_0^2 + \lambda^2 \|\Pi_h^{m-j} w\|_0^2)^{1/2}
        \leq (1 - \kappa) |w|_{*, \lambda},
    \end{align*}
    which concludes the proof.
\end{proof}

By similar computations, one can prove the Lipschitz continuity of the operators $F_\gamma$ and $F_{\gamma, h}$.

\begin{lemma}[Lipschitz continuity]
    For any $v_1$, $v_2 \in H^2(\Omega_h)$, one has
    \begin{align}
        \label{eq:lipschitz_continuity}
        \|F_\gamma[v_1] - F_\gamma[v_2]\|_0 &\lesssim |v_1 - v_2|_{*, \lambda, h}, &
        \|F_{\gamma, h}[v_1] - F_{\gamma, h}[v_2]\|_0 &\lesssim |v_1 - v_2|_{*, \lambda, h}.
    \end{align}
\end{lemma}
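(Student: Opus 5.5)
We follow the first steps of the proof of \eqref{eq:linear_approximation}, without subtracting the Laplacian-type term. Set $w := v_1 - v_2 \in H^2(\Omega_h)$. The elementary inequality
\begin{equation*}
    \Big|\inf_{\alpha} \sup_{\beta} a^{\alpha, \beta} - \inf_{\alpha} \sup_{\beta} b^{\alpha, \beta}\Big| \leq \sup_{\alpha, \beta} |a^{\alpha, \beta} - b^{\alpha, \beta}|
\end{equation*}
is valid pointwise a.e.\ in $\Omega$. Applying it with the source terms $\gamma^{\alpha,\beta} f^{\alpha,\beta}$, which cancel, gives
\begin{equation*}
    |F_{\gamma, h}[v_1] - F_{\gamma, h}[v_2]| \leq \sup_{\alpha, \beta} \big|\gamma^{\alpha, \beta} A^{\alpha, \beta} : \Pi_h^{m-j} \nabla^2 w + \gamma^{\alpha, \beta} b^{\alpha, \beta} \cdot \Pi_h^{m-j} \nabla w - \gamma^{\alpha, \beta} c^{\alpha, \beta} \Pi_h^{m-j} w\big|.
\end{equation*}
The same bound holds for $F_\gamma$, with the projections removed and the broken derivatives used. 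Measurability of the suprema follows as in the paper: $\Lambda_1 \times \Lambda_2$ is a compact metric space, hence separable, and the Carathéodory conditions allow us to restrict the supremum to a countable dense set.

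The second step bounds the coefficients. When $\lambda > 0$, we argue as in the previous lemma and apply Cauchy--Schwarz with weights $1$, $(2\lambda)^{-1/2}$, $\lambda^{-1}$:
\begin{equation*}
    |\cdots| \leq \Big(|\gamma A|^2 + \tfrac{1}{2\lambda}|\gamma b|^2 + \tfrac{1}{\lambda^2}|\gamma c|^2\Big)^{1/2} \big(|\Pi_h^{m-j}\nabla^2 w|^2 + 2\lambda |\Pi_h^{m-j}\nabla w|^2 + \lambda^2 |\Pi_h^{m-j} w|^2\big)^{1/2}.
\end{equation*}
The first factor is uniformly bounded. By \eqref{eq:cordes_ineq_low_order_case} and the triangle inequality, $|\gamma A| \leq |I_d| + (1-\kappa) \leq \sqrt d + 1$. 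Likewise $|\gamma b|^2/(2\lambda) \leq (1-\kappa)^2$, and $|\gamma c|/\lambda \leq 1 + (1-\kappa) \leq 2$. The constant therefore depends only on $d$; this is the one point needing care, since we do not want a dependence on $\lambda$. When $\lambda = 0$, \eqref{eq:cordes_ineq_high_order_case} gives $b = c = 0$ and $|\gamma A| \leq \sqrt d + 1$. Only the Hessian term then survives, which matches $|w|_{*,0,h} = |w|_{2,h}$.

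Finally, we take $L^2$ norms. The $L^2$ projection $\Pi_h^{m-j}$ is a contraction elementwise, so for instance $\|\Pi_h^{m-j}\nabla^2 w\|_0 \leq |w|_{2,h}$. This yields
\begin{equation*}
    \|F_{\gamma,h}[v_1]-F_{\gamma,h}[v_2]\|_0 \leq (\sqrt d + 2)\,|w|_{*,\lambda,h},
\end{equation*}
and the same estimate holds for $F_\gamma$ without any projection. This proves \eqref{eq:lipschitz_continuity}.
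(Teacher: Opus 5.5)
Your proof is correct and follows the same route as the paper, which only sketches it. You bound the inf--sup difference by the supremum of the pointwise differences (the source terms cancel), control $\gamma|A|$, $\gamma|b|$ and $\gamma c$ through \eqref{eq:cordes_ineq_low_order_case} or \eqref{eq:cordes_ineq_high_order_case}, and conclude with the weighted Cauchy--Schwarz inequality and the $L^2$ stability of $\Pi_h^{m-j}$, exactly as in the proof of \eqref{eq:linear_approximation}. Your explicit $\lambda$-independent constant $\sqrt d + 2$ is a detail the paper leaves implicit.
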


\begin{proof}
    This is easily verified, using the Cordes inequality \eqref{eq:cordes_ineq_low_order_case}, or alternatively using \eqref{eq:cordes_ineq_high_order_case} in the case $\lambda = 0$, in order to estimate $\gamma^{\alpha, \beta}(x) |A^{\alpha, \beta}(x)|$, $\gamma^{\alpha, \beta}(x) |b^{\alpha, \beta}(x)|$, and $\gamma^{\alpha, \beta}(x) c^{\alpha, \beta}(x)$ for every $\alpha \in \Lambda_1$, $\beta \in \Lambda_2$, and $x \in \Omega$.
\end{proof}

We also need the following inequality for functions of the space $V^0 = H^2(\Omega) \cap H_0^1(\Omega)$, which we refer to as a Miranda--Talenti inequality since it follows from the standard \emph{Miranda--Talenti estimate}, and reduces to it in the case $\lambda = 0$.

\begin{proposition}[Miranda--Talenti inequality]
    For any $v \in V^0$,
    \begin{equation}
        \label{eq:miranda_talenti_lambda}
        |v|_{*, \lambda} \leq \|\cL_\lambda v\|_0.
    \end{equation}
\end{proposition}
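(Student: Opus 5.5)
Expand the square of $\cL_\lambda v = \Delta v - \lambda v$ and compare it term by term with $|v|_{*, \lambda}^2$. For $v \in V^0$,
\begin{equation*}
    \|\cL_\lambda v\|_0^2 = \|\Delta v\|_0^2 - 2 \lambda \int_\Omega v \Delta v \, d x + \lambda^2 \|v\|_0^2 .
\end{equation*}
The last term already matches the $\lambda^2 \|v\|_0^2$ contribution in $|v|_{*, \lambda}^2$. The cross term is handled by integration by parts: since $v \in H^2(\Omega)$ has vanishing trace on $\partial \Omega$, Green's formula gives
\[
    -\int_\Omega v \Delta v \, d x = \int_\Omega |\nabla v|^2 \, d x = |v|_1^2 .
\]
This is justified by density, or directly for $v \in H^2(\Omega) \cap H_0^1(\Omega)$ on a Lipschitz domain, since $\nabla v \in H^1(\Omega)^d$ has a normal trace in $H^{1/2}$ and $v|_{\partial\Omega} = 0$. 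Hence $-2\lambda \int_\Omega v \Delta v \, dx = 2 \lambda |v|_1^2$, which matches the middle term exactly; it is nonnegative because $\lambda \geq 0$.

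It remains to show $|v|_2 \leq \|\Delta v\|_0$ for $v \in V^0$. This is the classical Miranda--Talenti estimate on convex domains, which I would invoke as a known result (e.g.\ Grisvard). If a justification is desired, the route is as follows. First, for smooth $v$ vanishing on $\partial\Omega$, two integrations by parts give
\[
    \|\Delta v\|_0^2 - |v|_2^2 = \int_{\partial\Omega} \mathcal{B}(\nabla v, \nabla v) \, ds ,
\]
where, using $v = 0$ on $\partial \Omega$, the boundary integrand reduces to a curvature term $\mathcal{B}$. Second, convexity makes this term nonnegative; on a polytope it in fact vanishes on the flat faces. Third, the identity extends to $H^2(\Omega) \cap H_0^1(\Omega)$ by approximating the convex domain by smooth convex domains, or by a density argument as in Grisvard.

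Combining the three pieces gives
\[
    \|\cL_\lambda v\|_0^2 = \|\Delta v\|_0^2 + 2 \lambda |v|_1^2 + \lambda^2 \|v\|_0^2 \geq |v|_2^2 + 2 \lambda |v|_1^2 + \lambda^2 \|v\|_0^2 = |v|_{*, \lambda}^2 ,
\]
and taking square roots yields the claim. The only substantive step is the classical estimate $|v|_2 \leq \|\Delta v\|_0$, where convexity of $\Omega$ enters. Everything else is an exact algebraic identity, which is why the constant in the inequality is exactly one.
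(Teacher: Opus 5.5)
Your proof is correct and follows the paper's argument exactly. You expand $\|\Delta v - \lambda v\|_0^2$, turn the cross term into $2\lambda |v|_1^2$ by integration by parts using the vanishing trace, and conclude with the classical Miranda--Talenti estimate $|v|_2 \leq \|\Delta v\|_0$ on convex domains, which the paper likewise cites rather than proves (the cleanest justification of the cross-term identity is density of $C_c^\infty(\Omega)$ in $H_0^1(\Omega)$ together with $\Delta v \in L^2(\Omega)$, rather than a normal-trace argument).
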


\begin{proof}
    One has
    \begin{equation*}
        \|\cL_\lambda v\|_0^2
        = \int_\Omega (\Delta v - \lambda v)^2\, d x
        = \|\Delta v\|_0^2 - 2 \lambda \int_\Omega (\Delta v) v\, d x + \lambda^2 \|v\|_0^2.
    \end{equation*}
    Integrating by parts, $\|\cL_\lambda v\|_0^2 = \|\Delta v\|_0^2 + 2 \lambda |v|_1^2 + \lambda^2 \|v\|_0^2$. Since $\Omega$ is convex and $v \in V^0$, the Miranda--Talenti estimate $|v|_2 \leq \|\Delta v\|_0$ holds (see \cite{maugeri2000,smears2013}), which concludes the proof.
\end{proof}

From the linear approximation inequality \eqref{eq:linear_approximation} and the Miranda--Talenti inequality \eqref{eq:miranda_talenti_lambda}, we deduce a strong monotonicity estimate for the operator $\cA_h$, and then the existence of a unique solution to the scheme \eqref{eq:scheme}.

\begin{lemma}[strong monotonicity]
    \label{lemma:strong_monotonicity}
    There exists $\eta_* \eqsim 1$ such that if $\eta \geq \eta_*$, it holds for any $v_1$, $v_2 \in V_h^g$ that
    \begin{equation}
        \label{eq:strong_monotonicity}
        \<\cA_h(v_1) - \cA_h(v_2), v_1 - v_2\> \geq \kappa |v_1 - v_2|_{*, \lambda}^2.
    \end{equation}
\end{lemma}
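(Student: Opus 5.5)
Let $w := v_1 - v_2$. Since $v_1, v_2 \in V_h^g = g_I + V_h^0$, we have $w \in V_h^0 \subset V^0$, so the Miranda--Talenti inequality \eqref{eq:miranda_talenti_lambda} applies to $w$. The plan is to split $\<\cA_h(v_1) - \cA_h(v_2), w\>$ into its consistency part and its stabilization part and to bound each from below. The only difficulty is that the consistency part only controls the projected quantity $X := \|\cL_{\lambda, h} w\|_0$, whereas Miranda--Talenti controls $|w|_{*,\lambda}$ by the unprojected $\|\cL_\lambda w\|_0$. The missing piece $Y := \|(I - \Pi_h^{m-j}) \cL_\lambda w\|_0$ will be recovered from the stabilization, and this is where the requirement $\eta \geq \eta_*$ enters.

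\textbf{Consistency part.} Write
\begin{equation*}
\<\cA_h^c(v_1) - \cA_h^c(v_2), w\> = X^2 + \int_\Omega \big(F_{\gamma,h}[v_1] - F_{\gamma,h}[v_2] - \cL_{\lambda,h} w\big)\, \cL_{\lambda,h} w \, dx .
\end{equation*}
By Cauchy--Schwarz and the linear approximation inequality \eqref{eq:linear_approximation}, the second term is bounded below by $-(1-\kappa) Z X$, where $Z := |w|_{*,\lambda}$. By Young's inequality,
\begin{equation*}
\<\cA_h^c(v_1) - \cA_h^c(v_2), w\> \geq \tfrac{1+\kappa}{2} X^2 - \tfrac{1-\kappa}{2} Z^2 .
\end{equation*}
On the other hand, by $L^2$-orthogonality of $\Pi_h^{m-j}$ and \eqref{eq:miranda_talenti_lambda},
\begin{equation*}
Z^2 \leq \|\cL_\lambda w\|_0^2 = X^2 + Y^2 .
\end{equation*}

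\textbf{Stabilization part (main step).} Since $\cA_h^s$ is linear, $\<\cA_h^s(v_1) - \cA_h^s(v_2), w\> = \<\cA_h^s(w), w\>$. We show $Y^2 \leq C\, \<\cA_h^s(w), w\>$ with $C \simeq 1$.
\begin{itemize}
\item \emph{Laplacian term.} On each cell, $\Delta \Pi_h^m w \in \PP_{m-2} \subset \PP_{m-j}$, so $(I - \Pi_h^{m-j}) \Delta w = (I - \Pi_h^{m-j}) \Delta (w - \Pi_h^m w)$. Hence
\begin{equation*}
\|(I - \Pi_h^{m-j}) \Delta w\|_0 \leq \sqrt d\, |w - \Pi_h^m w|_{2,h} .
\end{equation*}
Moreover $\Pi_h^1 (w - \Pi_h^m w) = 0$ because $\Pi_h^1 \Pi_h^m = \Pi_h^1$. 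Thus \eqref{eq:stab_scaling2} applies and gives $|w - \Pi_h^m w|_{2,h}^2 \lesssim s_{2,h}(w - \Pi_h^m w, w - \Pi_h^m w)$.
\item \emph{Reaction term.} Here $\lambda \|(I - \Pi_h^{m-j}) w\|_0$ is controlled directly by \eqref{eq:stab_scaling0}, giving $\lambda^2 \|(I - \Pi_h^{m-j}) w\|_0^2 \lesssim \lambda^2 s_{0,h}(w - \Pi_h^{m-j} w, w - \Pi_h^{m-j} w)$.
\end{itemize}
Combining the two via $(a+b)^2 \leq 2a^2 + 2b^2$ yields $Y^2 \leq C\, \<\cA_h^s(w), w\>$.

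\textbf{Conclusion.} Set $\eta_* := C$, which is $\simeq 1$. For $\eta \geq \eta_*$, using $\kappa \le 1$ so that $\frac{1+\kappa}{2} \leq 1$,
\begin{equation*}
\<\cA_h(v_1) - \cA_h(v_2), w\> \geq \tfrac{1+\kappa}{2} X^2 + Y^2 - \tfrac{1-\kappa}{2} Z^2 \geq \tfrac{1+\kappa}{2}(X^2 + Y^2) - \tfrac{1-\kappa}{2} Z^2 \geq \kappa Z^2 ,
\end{equation*}
where the last step uses $Z^2 \leq X^2 + Y^2$. This is \eqref{eq:strong_monotonicity}.
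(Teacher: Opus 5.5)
Your proof is correct and follows essentially the same route as the paper: you split into consistency and stabilization parts, bound the consistency part via the linear approximation inequality, recover $\|(I-\Pi_h^{m-j})\cL_\lambda w\|_0^2$ from the stabilization using $\Delta \Pi_h^m w \in \PP_{m-2,h}$, and conclude with the Pythagorean identity and Miranda--Talenti. The only cosmetic difference is that you decouple the cross term with Young's inequality, whereas the paper bounds $\|\cL_{\lambda,h} w\|_0 \le \|\cL_\lambda w\|_0$ in the cross term and applies Miranda--Talenti twice; this gives the same constant $\kappa$ and, incidentally, the slightly stronger intermediate bound $\kappa \|\cL_\lambda w\|_0^2$.
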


\begin{proof}
    Let $w := v_1 - v_2$. One has
    \begin{align*}
        \<\cA_h^c(v_1) - \cA_h^c(v_2), w\>
        &= \int_\Omega (F_{\gamma, h}[v_1] - F_{\gamma, h}[v_2]) \cL_{\lambda, h} w\, d x \\
        &= \|\cL_{\lambda, h} w\|_0^2 + \int_\Omega (F_{\gamma, h}[v_1] - F_{\gamma, h}[v_2] - \cL_{\lambda, h} w) \cL_{\lambda, h} w\, d x \\
        &\geq \|\cL_{\lambda, h} w\|_0^2 - \|F_{\gamma, h}[v_1] - F_{\gamma, h}[v_2] - \cL_{\lambda, h} w\|_0 \|\cL_{\lambda, h} w\|_0.
    \end{align*}
    Using \eqref{eq:linear_approximation} to estimate $\|F_{\gamma, h}[v_1] - F_{\gamma, h}[v_2] - \cL_{\lambda, h} w\|_0$, and using the fact that $\|\cL_{\lambda, h} w\|_0 = \|\Pi_h^{m-j} \cL_\lambda w\|_0 \leq \|\cL_\lambda w\|_0$,
    \begin{equation*}
        \<\cA_h^c(v_1) - \cA_h^c(v_2), w\> \geq \|\Pi_h^{m-j} \cL_\lambda w\|_0^2 - (1 - \kappa) |w|_{*, \lambda} \|\cL_\lambda w\|_0.
    \end{equation*}
    On the other hand, using the stabilization scaling inequalities \eqref{eq:stab_scaling0} and \eqref{eq:stab_scaling2} with $v = w - \Pi_h^{m} w$ and $v = w - \Pi_h^{m-j} w$,
    \begin{align*}
        \<\cA_h^s(v_1) - \cA_h^s(v_2), w\>
        &\gtrsim |w - \Pi_h^m w|_{2, h}^2 + \lambda^2 \|w - \Pi_h^{m-j} w\|_0^2 \\
        &\gtrsim \|\Delta w - \Delta \Pi_h^m w\|_0^2 + \lambda^2 \|w - \Pi_h^{m-j} w\|_0^2 \\
        &\geq \|\Delta w - \Pi_h^{m-j} \Delta w\|_0^2 + \lambda^2 \|w - \Pi_h^{m-j} w\|_0^2 \\
        &\gtrsim \|\Delta w - \Pi_h^{m-j} \Delta w - \lambda (w - \Pi_h^{m-j} w)\|_0^2 \\
        &= \|\cL_\lambda w - \Pi_h^{m-j} \cL_\lambda w\|_0^2,
    \end{align*}
    We can assume $\eta$ to be large enough so that $\eta \<\cA_h^s(v_1) - \cA_h^s(v_2), w\>
    \geq \|\cL_\lambda w - \Pi_h^{m-j} \cL_\lambda w\|_0^2$, and thus
    \begin{align*}
        \<\cA_h(v_1) - \cA_h(v_2), w\>
        &= \<\cA_h^c(v_1) - \cA_h^c(v_2), w\> + \eta \<\cA_h^s(v_1) - \cA_h^s(v_2), w\> \\
        &\geq \|\Pi_h^{m-j} \cL_\lambda w\|_0^2 - (1 - \kappa) |w|_{*, \lambda} \|\cL_\lambda w\|_0 + \|\cL_\lambda w - \Pi_h^{m-j} \cL_\lambda w\|_0^2 \\
        &= \|\cL_\lambda w\|_0^2 - (1 - \kappa) |w|_{*, \lambda} \|\cL_\lambda w\|_0,
    \end{align*}
    where the last equality follows from the Pythagorean theorem. Using two times successively the Miranda--Talenti inequality \eqref{eq:miranda_talenti_lambda}, $\|\cL_\lambda w\|_0^2 - (1 - \kappa) |w|_{*, \lambda} |\cL_\lambda w|_0 \geq \kappa \|\cL_\lambda w\|_0^2 \geq \kappa |w|_{*, \lambda}^2$, which concludes the proof.
\end{proof}

From now on, we let $\eta_* \eqsim 1$ be as in the above lemma.

\begin{theorem}[existence and uniqueness]
    \label{thm:existence_uniqueness}
    If $\eta \geq \eta_*$, then there exists a unique solution $u_h \in V_h^g$ to \eqref{eq:scheme}.
\end{theorem}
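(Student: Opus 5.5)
We plan to reduce \eqref{eq:scheme} to a nonlinear equation on the finite-dimensional space $V_h^0$ and apply the Browder--Minty theorem for strongly monotone, continuous operators; in finite dimension this is elementary. Writing $u_h = g_I + z$ with $z \in V_h^0$, we define $T \colon V_h^0 \to (V_h^0)^*$ by $\<T(z), v\> := \<\cA_h(g_I + z), v\>$ for $v \in V_h^0$. Then \eqref{eq:scheme} is equivalent to finding $z \in V_h^0$ with $T(z) = 0$. For $z_1, z_2 \in V_h^0$, the functions $g_I + z_1$ and $g_I + z_2$ both lie in $V_h^g$, with difference $z_1 - z_2 \in V_h^0$. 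Hence Lemma~\ref{lemma:strong_monotonicity} gives, for $\eta \geq \eta_*$,
\begin{equation*}
    \<T(z_1) - T(z_2), z_1 - z_2\> \geq \kappa |z_1 - z_2|_{*, \lambda}^2 .
\end{equation*}

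\textbf{Step 1: $|\cdot|_{*, \lambda}$ is a norm on $V_h^0$.} If $\lambda > 0$ this is immediate. If $\lambda = 0$, then $|v|_2 = 0$ forces $v$ to be affine on the connected set $\Omega$. Since $v$ vanishes on $\partial\Omega$, which is not contained in a hyperplane, we get $v = 0$. This is the step needing the most care, since Lemma~\ref{lemma:strong_monotonicity} only controls a seminorm a priori.

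\textbf{Step 2: uniqueness.} If $T(z_1) = T(z_2) = 0$, the monotonicity bound gives $\kappa |z_1 - z_2|_{*, \lambda}^2 \leq 0$. By Step 1, $z_1 = z_2$.

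\textbf{Step 3: continuity of $T$.} The term $\<\cA_h^c(g_I+z), v\>$ depends continuously on $z$. Indeed, the Lipschitz estimate \eqref{eq:lipschitz_continuity} gives $\|F_{\gamma,h}[g_I+z_1] - F_{\gamma,h}[g_I+z_2]\|_0 \lesssim |z_1 - z_2|_{*, \lambda}$. Moreover, $F_{\gamma,h}[g_I] \in L^2(\Omega)$ by the source term assumption and the $L^\infty$ bounds on $\gamma A$, $\gamma b$, $\gamma c$. Finally, $\|\cL_{\lambda,h} v\|_0 \leq \|\cL_\lambda v\|_0 < \infty$. The stabilization part $\<\cA_h^s(\cdot), v\>$ is affine in $z$, hence continuous. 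All norms on the finite-dimensional space $V_h^0$ are equivalent, so any $h$-dependent constants here are harmless.

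\textbf{Step 4: existence.} Fix a basis of $V_h^0$ and a Euclidean structure for which $|\cdot|_{*,\lambda}$ is equivalent to the Euclidean norm. From monotonicity, $\<T(z), z\> \geq \kappa |z|_{*,\lambda}^2 - \|T(0)\|_* |z|_{*,\lambda}$, which is positive on a large enough sphere $|z|_{*,\lambda} = R$. By the standard Brouwer-based acute-angle lemma, $T$ then has a zero in the ball of radius $R$. Alternatively, one can take a Banach fixed-point route: identify $T$ with a map $V_h^0 \to V_h^0$ via the Riesz map of the inner product inducing $|\cdot|_{*,\lambda}$ (available since this is a Hilbertian norm). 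The map $z \mapsto z - \tau T(z)$ is then a contraction for small $\tau > 0$, by strong monotonicity together with the Lipschitz continuity from Step 3. Either way we obtain $z$, and $u_h = g_I + z$ solves \eqref{eq:scheme}.
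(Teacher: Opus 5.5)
Your proposal is correct and follows the paper's route: strong monotonicity from Lemma~\ref{lemma:strong_monotonicity} and Lipschitz continuity of $\cA_h$ via \eqref{eq:lipschitz_continuity}, fed into Browder--Minty. You unpack that theorem in finite dimensions through the shift by $g_I$ and a Brouwer or contraction argument. Your explicit check that $|\cdot|_{*,\lambda}$ is a norm on $V_h^0$ when $\lambda = 0$ fills in a detail the paper leaves implicit, and relying on finite-dimensionality rather than \eqref{eq:stab_scaling0}--\eqref{eq:stab_scaling2} for the stabilization term is harmless for fixed $h$.
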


\begin{proof}
    The inequality \eqref{eq:strong_monotonicity} implies that $\cA_h$ is strongly monotone when seen as an operator from $V_h^g$ to $(V_h^0)^*$. It is easily verified that it is also Lipschitz continuous, using the Lipschitz continuity estimate \eqref{eq:lipschitz_continuity} for $F_{\gamma, h}$ and the stabilization scaling inequalities \eqref{eq:stab_scaling0} and \eqref{eq:stab_scaling2}. Therefore, the Browder--Minty theorem applies, and yields the announced result.
\end{proof}

We end this section by providing the following estimate of the error between the solution $u_h \in V_h^g$ to the scheme \eqref{eq:scheme} and an arbitrary function $u_I \in V_h^g$, which should be understood as some interpolant of the solution $u \in V^g$ to \eqref{eq:isaacs} and \eqref{eq:bc}.

\begin{theorem}[error estimate]
    \label{thm:error_estimate}
    Assume that $\eta \geq \eta_*$. Then, for any $u_I \in V_h^g$,
    \begin{equation*}
        |u_I - u_h|_{*, \lambda} \lesssim \eta \kappa^{-1} (|u - u_I|_{*, \lambda} + |u - \Pi_h^m u|_{*, \lambda, h} + \lambda E_{j,h}^0[u] + \lambda^{1/2} E_{j,h}^1[u]),
    \end{equation*}
    where
    \begin{align*}
        E_{j,h}^0[u] &:= \begin{cases}
            0 &\text{if } j = 0, \\
            \|u - \Pi_h^{m-j} u\|_0 &\text{if } j \geq 1,
        \end{cases} &
        E_{j,h}^1[u] &:= \begin{cases}
            0 &\text{if } j \leq 1, \\
            \|\nabla u - \Pi_h^{m-2} \nabla u\|_0 &\text{if } j = 2.
        \end{cases}
    \end{align*}
    In particular, if $j = 0$, only the first two bounding terms appear in the estimate, and only the first three if $j = 1$.
\end{theorem}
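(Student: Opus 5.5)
Let $e_h := u_I - u_h \in V_h^0$. We start from the strong monotonicity \eqref{eq:strong_monotonicity} applied to $v_1 = u_I$, $v_2 = u_h$ (both in $V_h^g$). Since $u_h$ solves \eqref{eq:scheme}, this gives
\begin{equation*}
    \kappa |e_h|_{*, \lambda}^2 \leq \<\cA_h(u_I), e_h\>.
\end{equation*}
The exact solution satisfies $F_\gamma[u] = 0$ a.e., so $\<\cA(u), e_h\> = 0$. We will bound $\<\cA_h(u_I), e_h\>$ by $C \eta (\ldots) |e_h|_{*, \lambda}$ and then divide by $|e_h|_{*, \lambda}$. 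For the consistency part, we insert $u$:
\begin{equation*}
    \<\cA_h^c(u_I), e_h\> = \int_\Omega (F_{\gamma, h}[u_I] - F_{\gamma, h}[u]) \cL_{\lambda, h} e_h\, dx + \int_\Omega F_{\gamma, h}[u] \cL_{\lambda, h} e_h\, dx.
\end{equation*}
We will bound the first integral by the Lipschitz estimate \eqref{eq:lipschitz_continuity} and $\|\cL_{\lambda, h} e_h\|_0 \leq \|\cL_\lambda e_h\|_0 \lesssim |e_h|_{*, \lambda}$ (the last step uses the integration by parts identity from the Miranda--Talenti proof). This yields $\lesssim |u - u_I|_{*, \lambda} |e_h|_{*, \lambda}$.

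The second integral is the delicate term, and it is where the projection order $j$ enters. We will compare $F_{\gamma, h}[u]$ with $F_\gamma[u] = 0$. As in the proof of the Lipschitz estimate, the pointwise difference is bounded by
\begin{equation*}
    \sup_{\alpha, \beta} \gamma^{\alpha, \beta} \bigl(|A^{\alpha, \beta}| |\Pi_h^{m-j} \nabla^2 u - \nabla^2 u| + |b^{\alpha, \beta}| |\Pi_h^{m-j} \nabla u - \nabla u| + c^{\alpha, \beta} |\Pi_h^{m-j} u - u|\bigr).
\end{equation*}
Using the bounds $\gamma |A| \lesssim 1$, $\gamma |b| \lesssim \lambda^{1/2}$ and $\gamma c \lesssim \lambda$ from \eqref{eq:cordes_ineq_low_order_case}, the $L^2$ norm of this is at most a constant times
\begin{equation*}
    \|\nabla^2 u - \Pi_h^{m-j} \nabla^2 u\|_0 + \lambda^{1/2} \|\nabla u - \Pi_h^{m-j} \nabla u\|_0 + \lambda \|u - \Pi_h^{m-j} u\|_0.
\end{equation*}
Because $m - j$ may be less than $m$, we then reduce these terms to the stated quantities using $\nabla^k \Pi_h^m u \in \PP_{m-k, h}$ and the best-approximation property of $L^2$ projections:
\begin{itemize}
    \item The Hessian term satisfies $\|\nabla^2 u - \Pi_h^{m-j} \nabla^2 u\|_0 \leq |u - \Pi_h^m u|_{2, h}$ for every $j \leq 2$.
    \item The gradient term is bounded by $|u - \Pi_h^m u|_{1, h}$ when $j \leq 1$; when $j = 2$ it stays as $E^1_{2,h}[u]$.
    \item The zeroth-order term is bounded by $\|u - \Pi_h^m u\|_0$ when $j = 0$; otherwise it stays as $E^0_{j,h}[u]$.
\end{itemize}
Collecting these, the bound is $|u - \Pi_h^m u|_{*, \lambda, h} + \lambda E^0_{j,h} + \lambda^{1/2} E^1_{j,h}$. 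The care needed is in this bookkeeping and in the case $\lambda = 0$, where $b = c = 0$ and the argument is simpler.

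For the stabilization part, we use Cauchy--Schwarz for the symmetric positive semidefinite forms together with the upper bounds in Assumption~\ref{assum:stab_scaling}. The $e_h$ factors satisfy
\begin{equation*}
    s_{2, h}(e_h - \Pi_h^m e_h, e_h - \Pi_h^m e_h)^{1/2} \lesssim |e_h - \Pi_h^m e_h|_{2, h} \leq |e_h|_2,
\end{equation*}
and similarly $\lambda \|e_h - \Pi_h^{m-j} e_h\|_0 \leq \lambda \|e_h\|_0$. For the $u_I$ factors, we write $u_I - \Pi_h^m u_I = (I - \Pi_h^m)(u_I - u) + (I - \Pi_h^m) u$. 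This gives, with a stability bound for $I - \Pi_h^m$ in the broken $H^2$ seminorm, a bound by $|u - u_I|_2 + |u - \Pi_h^m u|_{2, h}$. That stability bound is obtained by Poincaré-type estimates, applicable because $\Pi_h^1$ of the argument vanishes, which is also what legitimizes using \eqref{eq:stab_scaling2}. Likewise, $\lambda \|u_I - \Pi_h^{m-j} u_I\|_0 \leq \lambda \|u - u_I\|_0 + \lambda \|u - \Pi_h^{m-j} u\|_0$, and the last term is $\lambda E^0_{j,h}$ or $\lambda \|u - \Pi_h^m u\|_0$ according to whether $j \geq 1$. The $\eta$ factor comes from this stabilization part. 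Dividing by $\kappa |e_h|_{*, \lambda}$ then concludes, and since $\eta \gtrsim 1$ the consistency terms can be absorbed into the $\eta$ prefactor.
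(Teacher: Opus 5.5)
Your proof is correct. Its skeleton matches the paper: strong monotonicity, $\langle \cA_h(u_h), e_h\rangle = 0$, then a bound on $\langle \cA_h(u_I), e_h\rangle$ via Lipschitz continuity, stabilization scaling and a consistency defect.

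The two arguments differ in which intermediate function is inserted. The paper splits
\[
\langle \cA_h(u_I), w\rangle = \langle \cA_h(u_I) - \cA_h(\Pi_h^m u), w\rangle + \langle \cA_h(\Pi_h^m u), w\rangle,
\]
so it inserts the piecewise polynomial $\Pi_h^m u$; this is why $\cA_h$ is defined on $V_h + \PP_{m,h}(\Omega)$. The payoff is that on $\Pi_h^m u$ the $s_{2,h}$ part of the stabilization vanishes identically, as does the $s_{0,h}$ part when $j=0$. Also $\Pi_h^{m-j}\nabla^2 \Pi_h^m u = \nabla^2 \Pi_h^m u$, so the defect $F_{\gamma,h}[\Pi_h^m u] - F_\gamma[\Pi_h^m u]$ involves only the $b$ and $c$ terms. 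This shows cleanly that $E^0_{j,h}$ and $E^1_{j,h}$ arise only because the lower-order projections fail to reproduce $\PP_{m,h}(\Omega)$. The price is an extra Lipschitz step for $F_\gamma$ between $\Pi_h^m u$ and $u$ on the broken space, and, for $j=2$, a further triangle inequality to pass from $\nabla \Pi_h^m u$ to $\nabla u$.

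You instead insert $u$ itself in the consistency part and compare $F_{\gamma,h}[u]$ directly with $F_\gamma[u]=0$. You then reduce all three projection errors by best approximation against $\nabla^k \Pi_h^m u$, which is slightly more direct and never evaluates $\cA_h$ on $\PP_{m,h}(\Omega)$. In exchange, you must treat the stabilization at $u_I$ directly, through the broken $H^2$-seminorm stability of $I - \Pi_h^m$. The paper also needs this ingredient implicitly, since $(I - \Pi_h^m)(u_I - \Pi_h^m u) = u_I - \Pi_h^m u_I$.

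One small correction: $|e_h - \Pi_h^m e_h|_{2,h} \le |e_h|_2$ does not hold with constant one, because the $L^2$ projection is not contractive in the $H^2$ seminorm. It does hold with $\lesssim$, by the same Poincaré/inverse-estimate argument you invoke for the $u_I$ factor, and that is all you need.
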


\begin{proof}
    Let $w := u_I - u_h$. Using the strong monotonicity inequality \eqref{eq:strong_monotonicity}, and then the fact that $u_h$ is solution to the scheme \eqref{eq:scheme},
    \begin{equation*}
        |w|_{*, \lambda}^2
        \leq \kappa^{-1} \<\cA_h(u_I) - \cA_h(u_h), w\>
        = \kappa^{-1} \<\cA_h(u_I), w\>.
    \end{equation*}
    It remains to prove that $\<\cA_h(u_I), w\> \lesssim \eta (|u - u_I|_{*, \lambda} + |u - \Pi_h^m u|_{*, \lambda, h} + \lambda E_{j,h}^0[u] + \lambda^{1/2} E_{j,h}^1[u]) |w|_{*, \lambda}$.

    One has $\<\cA_h(u_I), w\> = \<\cA_h(u_I) - \cA_h(\Pi_h^m u), w\> + \<\cA_h(\Pi_h^m u), w\>$. Using the Lipschitz continuity estimate \eqref{eq:lipschitz_continuity}, the stabilization scaling inequalities \eqref{eq:stab_scaling0} and \eqref{eq:stab_scaling2}, the triangle inequality $|u_I - \Pi_h^m u|_{*, \lambda, h} \leq |u - u_I|_{*, \lambda} + |u - \Pi_h^m u|_{*, \lambda, h}$, and the $L^2$ stability of the projectors, it is easy to prove that
    \begin{equation*}
        \<\cA_h(u_I) - \cA_h(\Pi_h^m u), w\> \lesssim \eta (|u - u_I|_{*, \lambda} + |u - \Pi_h^m u|_{*, \lambda, h}) |w|_{*, \lambda}.
    \end{equation*}

It remains to bound $\<\cA_h(\Pi_h^m u), w\>=\<\cA_h^c(\Pi_h^m u), w\>+\<\cA_h^s(\Pi_h^m u), w\>$.
%
Since $\Pi_h^m u \in \PP_{m, h}(\Omega)$, one has $\<\cA_h^s(\Pi_h^m u), w\> = 0$ if $j=0$ and otherwise
    \begin{equation*}
        \<\cA_h^{s}(\Pi_h^m u), w\>
        = \eta \lambda^2 s_{0, h}(\Pi_h^m u - \Pi_h^{m-j} u, w - \Pi_h^{m-j} w)
        \lesssim \eta \lambda \|u - \Pi_h^{m-j} u\|_0 |w|_{*, \lambda}
        = \eta \lambda E_{j, h}^0[u] |w|_{*, \lambda}.
    \end{equation*}
On the other hand,
    \begin{align*}
        \<\cA_h^{c}(\Pi_h^m u), w\>
        &= \int_\Omega F_{\gamma, h}[\Pi_h^m u] \cL_{\lambda, h} w\, d x \\
        &= \int_\Omega F_\gamma[\Pi_h^m u] \cL_{\lambda, h} w\, d x + \int_\Omega (F_{\gamma, h}[\Pi_h^m u] - F_\gamma[\Pi_h^m u]) \cL_{\lambda, h} w\, d x.
    \end{align*}
   Since $u$ is a strong solution to \eqref{eq:isaacs}, one has $F_\gamma[u] = 0$ almost everywhere, so
    \begin{equation*}
        \int_\Omega F_\gamma[\Pi_h^m u] \cL_{\lambda, h} w\, d x
        = \int_\Omega (F_\gamma[\Pi_h^m u] - F_\gamma[u]) \cL_{\lambda, h} w\, d x
        \lesssim |u - \Pi_h^m u|_{*, \lambda, h} |w|_{*, \lambda},
    \end{equation*}
    using the Lipschitz continuity estimate \eqref{eq:lipschitz_continuity} for $F_\gamma$. It remains to estimate the term
    \begin{equation*}
        \int_\Omega (F_{\gamma, h}[\Pi_h^m u] - F_\gamma[\Pi_h^m u]) \cL_{\lambda, h} w\, d x.
    \end{equation*}
    One has
    \begin{align*}
        \|F_{\gamma, h}[\Pi_h^m u] - F_\gamma[\Pi_h^m u]\|_0
        &\leq \Big\|\sup_{\substack{\alpha \in \Lambda_1 \\ \beta \in \Lambda_2}} |\gamma^{\alpha, \beta} (\cL_h^{\alpha, \beta} \Pi_h^m u - \cL^{\alpha, \beta} \Pi_h^m u)|\Big\|_0 \\
        &\leq \Big\|\sup_{\substack{\alpha \in \Lambda_1 \\ \beta \in \Lambda_2}}\,\big|\gamma^{\alpha, \beta}b^{\alpha, \beta} \cdot (\Pi_h^{m-j}-I) \nabla \Pi_h^m u \big|\Big\|_0 \\
        &\quad + \Big\|\sup_{\substack{\alpha \in \Lambda_1 \\ \beta \in \Lambda_2}} |\gamma^{\alpha, \beta} c^{\alpha, \beta} (\Pi_h^{m-j} - I) \Pi_h^m u|\Big\|_0 \\
       &\lesssim \lambda^{1/2} \|(\Pi_h^{m-j} - I) \nabla \Pi_h^m u\|_0 + \lambda \|\Pi_h^{m-j} u - \Pi_h^m u\|_0,
    \end{align*}
    where we controlled $|\gamma^{\alpha, \beta}b^{\alpha, \beta}|$ and  $|\gamma^{\alpha, \beta} c^{\alpha, \beta}|$ using \eqref{eq:cordes_ineq_low_order_case}. Observe that
    \begin{align*}
        \|\Pi_h^{m-j} u - \Pi_h^m u\|_0 &\leq E_{j, h}^0[u], &
        \lambda^{1/2} \|(\Pi_h^{m-j} - I) \nabla \Pi_h^m u\|_0 \leq \lambda^{1/2} E_{j, h}^1[u] + |u - \Pi_h^m u|_{*, \lambda, h},
    \end{align*}
    where for the second inequality we used that the only nontrivial case is $j=2$, in which
    \begin{align*}
        \|(\Pi_h^{m-2} - I) \nabla \Pi_h^m u\|_0
        &\leq \|(\Pi_h^{m-2} - I) \nabla u\|_0 + \|(\Pi_h^{m-2} - I) \nabla (u - \Pi_h^m u)\|_0 \\
        &\leq E_{2, h}^1[u] + \|\nabla (u - \Pi_h^m u)\|_0.
    \end{align*}
    This concludes the proof.
\end{proof}

\section{Variants of the scheme}
\label{sec:variants}

In this section, we analyze two variants of the scheme. The first considers the use of different polynomial projection orders for the various terms in the differential operator, following the approach of~\cite{cangiani2017}. While equal-order projections simplify the analysis, it is natural to ask whether projection orders tailored to the differential terms can also be employed. As shown below, an a priori error analysis can still be established; however, the resulting bounds exhibit a stronger dependence on the Cordes condition, involving an additional factor of $\kappa^{-1}$.
The second variant concerns the weak imposition of boundary conditions. In dimensions $d\in\{2,3\}$ (to which we restrict our analysis for technical reasons), we show that it provides a natural alternative to strong imposition, yielding an equally optimal method without additional parameters and with a simpler implementation.

\subsection{Different polynomial projection degrees}
\label{subsec:scheme_alt1}

Let us define the following variants of the operators $\cL_h^{\alpha, \beta}$, $\cL_{\lambda, h}$, and $F_{\gamma, h}$ from \eqref{eq:projected_pointwise_operators}:
\begin{align*}
    \widetilde \cL_h^{\alpha, \beta} v &:= A^{\alpha, \beta} : \Pi_h^{m-2} \nabla^2 v + b^{\alpha, \beta} \cdot \Pi_h^{m-1} \nabla v - c^{\alpha, \beta} \Pi_h^m v, \\
    \widetilde \cL_{\lambda, h} v &:= \Pi_h^{m-2} \Delta v - \lambda \Pi_h^m v, \\
    \widetilde F_{\gamma, h}[v] &:= \inf_{\alpha \in \Lambda_1} \sup_{\beta \in \Lambda_2}\, [\gamma^{\alpha, \beta} (\widetilde \cL_h^{\alpha, \beta} v - f^{\alpha, \beta})].
\end{align*}
We then define, for $v \in V_h + \PP_{m, h}(\Omega)$ and $w \in V_h$,
\begin{align*}
    \<\widetilde \cA_h^c(v), w\> &:= \int_\Omega \widetilde F_{\gamma, h}[v] \widetilde \cL_{\lambda, h} w\, d x, \\
    \<\widetilde \cA_h^s(v), w\> &:= s_{2, h}(v - \Pi_h^m v, w - \Pi_h^m w) + \lambda^2 s_{0, h}(v - \Pi_h^m v, w - \Pi_h^m w), \\
    \<\widetilde \cA_h(v), w\> &:= \<\widetilde \cA_h^c(v), w\> + \eta \kappa^{-1} \<\widetilde \cA_h^s(v), w\>,
\end{align*}
and consider the virtual element scheme:
\begin{equation}
    \label{eq:scheme_alt1}
    \text{find } \widetilde u_h \in V_h^g \text{ s.t.\ } \forall v \in V_h^0,\, \<\widetilde \cA_h(\widetilde u_h), v\> = 0.
\end{equation}

As in section~\ref{sec:scheme}, $\eta > 0$ is a stabilization factor, that should be understood as a parameter of the scheme. Observe however that in the definition of $\<\widetilde \cA_h(v), w\>$, the term $\<\widetilde \cA_h^s(v), w\>$ is multiplied by the factor $\eta \kappa^{-1}$, while in the definition of $\<\cA_h(v), w\>$ in section~\ref{sec:scheme}, the term $\<\cA_h^s(v), w\>$ was only multiplied by $\eta$. The additional factor $\kappa^{-1}$ in the construction of the scheme \eqref{eq:scheme_alt1} is required in the proof of the following strong monotonicity property, which reflects Lemma~\ref{lemma:strong_monotonicity} in the new setting.

\begin{lemma}
    \label{lemma:strong_monotonicity_alt1}
    There exists $\widetilde \eta_* \eqsim 1$ such that if $\eta \geq \widetilde \eta_*$, it holds for any $v_1$, $v_2 \in V_h^g$ that
    \begin{equation}
        \label{eq:strong_monotonicity_alt1}
        \<\widetilde \cA_h(v_1) - \widetilde \cA_h(v_2), v_1 - v_2\> \gtrsim \kappa |v_1 - v_2|_{*, \lambda}^2.
    \end{equation}
\end{lemma}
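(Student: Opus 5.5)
The plan is to repeat the argument of Lemma~\ref{lemma:strong_monotonicity}. The difference is that $\widetilde \cL_{\lambda, h}$ is no longer an $L^2$ projection of $\cL_\lambda$, so the Pythagorean identity is not available. Instead, the defect $\cL_\lambda w - \widetilde \cL_{\lambda, h} w$ will be controlled by the stabilization through Young's inequality. This step is where the factor $\kappa^{-1}$ in front of $\widetilde \cA_h^s$ is needed.

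\textbf{Linear approximation and lower bound for the consistency part.} Let $w := v_1 - v_2 \in V_h^0 \subset V^0$. Exactly as in the proof of \eqref{eq:linear_approximation}, but with $\Pi_h^{m-2}$, $\Pi_h^{m-1}$, $\Pi_h^m$ applied to $\nabla^2 w$, $\nabla w$, $w$ respectively, the Cauchy--Schwarz inequality and \eqref{eq:cordes_ineq_low_order_case} (or \eqref{eq:cordes_ineq_high_order_case} if $\lambda = 0$) give
\begin{equation*}
\|\widetilde F_{\gamma, h}[v_1] - \widetilde F_{\gamma, h}[v_2] - \widetilde \cL_{\lambda, h} w\|_0 \leq (1 - \kappa) |w|_{*, \lambda}.
\end{equation*}
The projections have $L^2$ norm at most one, so this bound holds. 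Set $x := \|\widetilde \cL_{\lambda, h} w\|_0$, $a := \|\cL_\lambda w\|_0$ and $\delta := \|\cL_\lambda w - \widetilde \cL_{\lambda, h} w\|_0$. Then
\begin{equation*}
\<\widetilde \cA_h^c(v_1) - \widetilde \cA_h^c(v_2), w\> \geq x^2 - (1-\kappa) |w|_{*, \lambda}\, x.
\end{equation*}
By the Miranda--Talenti inequality \eqref{eq:miranda_talenti_lambda} we have $|w|_{*, \lambda} \leq a$, and the triangle inequality gives $a \leq x + \delta$. Hence
\begin{equation*}
x^2 - (1-\kappa) a x \geq \kappa x^2 - (1-\kappa)\delta x \geq \tfrac{\kappa}{2} x^2 - \tfrac{1}{2\kappa}\delta^2,
\end{equation*}
using Young's inequality in the last step. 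Next, $x^2 \geq \tfrac12 a^2 - \delta^2$, so
\begin{equation*}
\<\widetilde \cA_h^c(v_1) - \widetilde \cA_h^c(v_2), w\> \geq \tfrac{\kappa}{4} a^2 - C\kappa^{-1} \delta^2,
\end{equation*}
with $C$ an absolute constant, since $\kappa \leq 1$.

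\textbf{Control of the defect by the stabilization.} Write
\begin{equation*}
\cL_\lambda w - \widetilde \cL_{\lambda, h} w = (I - \Pi_h^{m-2}) \Delta w - \lambda (I - \Pi_h^m) w .
\end{equation*}
Since $\Delta \Pi_h^m w \in \PP_{m-2, h}(\Omega)$, the best-approximation property gives
\begin{equation*}
\|(I - \Pi_h^{m-2}) \Delta w\|_0 \leq \|\Delta(w - \Pi_h^m w)\|_0 \lesssim |w - \Pi_h^m w|_{2, h}.
\end{equation*}
Therefore $\delta^2 \lesssim |w - \Pi_h^m w|_{2, h}^2 + \lambda^2 \|w - \Pi_h^m w\|_0^2$. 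The function $w - \Pi_h^m w$ satisfies $\Pi_h^1(w - \Pi_h^m w) = 0$ because $m \geq 2$, so \eqref{eq:stab_scaling0} and \eqref{eq:stab_scaling2} apply and give $\delta^2 \leq C' \<\widetilde \cA_h^s(v_1) - \widetilde \cA_h^s(v_2), w\>$ with $C' \eqsim 1$.

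\textbf{Conclusion.} Choose $\widetilde \eta_* := C C' \eqsim 1$. For $\eta \geq \widetilde \eta_*$,
\begin{equation*}
\eta \kappa^{-1} \<\widetilde \cA_h^s(v_1) - \widetilde \cA_h^s(v_2), w\> \geq C \kappa^{-1} \delta^2,
\end{equation*}
which exactly absorbs the negative term above. This yields
\begin{equation*}
\<\widetilde \cA_h(v_1) - \widetilde \cA_h(v_2), w\> \geq \tfrac{\kappa}{4} \|\cL_\lambda w\|_0^2 \geq \tfrac{\kappa}{4} |w|_{*, \lambda}^2,
\end{equation*}
where the last step is \eqref{eq:miranda_talenti_lambda} again.

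\textbf{Main obstacle.} The delicate step is the first one. Without orthogonality between $\widetilde \cL_{\lambda, h} w$ and the defect, one must split the cross term by Young's inequality with weight $\kappa$. This produces a $\kappa^{-1}\delta^2$ penalty, so the stabilization must be scaled by $\kappa^{-1}$ and only a constant-multiple ($\gtrsim$) version of the bound in Lemma~\ref{lemma:strong_monotonicity} is obtained.
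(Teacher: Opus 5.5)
Your proof is correct and follows essentially the same route as the paper: the linear approximation bound for $\widetilde F_{\gamma,h}$, the Miranda--Talenti inequality, a Young split with weight $\kappa$ that produces a $\kappa^{-1}$-weighted defect, and absorption of that defect by the $\kappa^{-1}$-scaled stabilization. The difference is only in bookkeeping. The paper writes $\widetilde \cL_{\lambda,h} w = \Pi_h^m \cL_\lambda w - (\Pi_h^m - \Pi_h^{m-2})\Delta w$ and closes with the Pythagorean identity for $\Pi_h^m$, as in Lemma~\ref{lemma:strong_monotonicity}, obtaining the constant $\kappa/3$; so orthogonality is in fact still usable, contrary to your opening remark. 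You instead treat the whole defect $\cL_\lambda w - \widetilde\cL_{\lambda,h} w$ with the triangle inequality alone and obtain $\kappa/4$ without any orthogonality.
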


\begin{proof}
    Proceeding as in the proof of Lemma~\ref{lemma:strong_monotonicity}, for $v_1$, $v_2 \in V_h^g$ and $w = v_1 - v_2$,
    \begin{equation*}
        \<\widetilde \cA_h^c(v_1) - \widetilde \cA_h^c(v_2), w\> \geq \|\widetilde \cL_{\lambda, h} w\|_0^2 - (1 - \kappa) |w|_{*, \lambda} \|\widetilde \cL_{\lambda, h} w\|_0.
    \end{equation*}
    Using the Miranda--Talenti estimate \eqref{eq:miranda_talenti_lambda},
    \begin{equation*}
        \<\widetilde \cA_h^c(v_1) - \widetilde \cA_h^c(v_2), w\> \geq \|\widetilde \cL_{\lambda, h} w\|_0^2 - (1 - \kappa) \|\cL_\lambda w\|_0 \|\widetilde \cL_{\lambda, h} w\|_0.
    \end{equation*}
    Using that $\widetilde \cL_{\lambda, h} w = \Pi_h^m \cL_\lambda w - (\Pi_h^m \Delta w - \Pi_h^{m-2} \Delta w)$ and
    \begin{align*}
        \|\Pi_h^m \cL_\lambda w\|_0 &\leq \|\cL_\lambda w\|_0, &
        \|\Pi_h^m \Delta w - \Pi_h^{m-2} \Delta w\|_0 &\leq \|\Delta w - \Pi_h^{m-2} \Delta w\|_0,
    \end{align*}
    one has
    \begin{align*}
        \|\widetilde \cL_{\lambda, h} w\|_0^2
        &= \|\Pi_h^m \cL_\lambda w\|_0^2 + \|\Pi_h^m \Delta w - \Pi_h^{m-2} \Delta w\|_0^2 - 2 \int_\Omega \Pi_h^m \cL_\lambda w (\Pi_h^m \Delta w - \Pi_h^{m-2} \Delta w)\, d x \\
        &\geq \|\Pi_h^m \cL_\lambda w\|_0^2 - 2 \int_\Omega \Pi_h^m \cL_\lambda w (\Pi_h^m \Delta w - \Pi_h^{m-2} \Delta w)\, d x \\
        &\geq \left(1 - \frac{\kappa}{3}\right) \|\Pi_h^m \cL_\lambda w\|_0^2 - \frac{3}{\kappa} \|\Delta w - \Pi_h^{m-2} \Delta w\|_0^2
    \end{align*}
    and
    \begin{align*}
        (1 - \kappa) \|\cL_\lambda w\|_0 \|\widetilde \cL_{\lambda, h} w\|_0 &\leq (1 - \kappa) \|\cL_\lambda w\|_0^2 + (1 - \kappa) \|\cL_\lambda w\|_0 \|\Delta w - \Pi_h^{m-2} \Delta w\|_0 \\
        &\leq \left(1 - \kappa + \frac{\kappa}{3}\right) \|\cL_\lambda w\|_0^2 + \frac{3 (1 - \kappa)^2}{4 \kappa} \|\Delta w - \Pi_h^{m-2} \Delta w\|_0^2 \\
        &\leq \left(1 - \frac{2 \kappa}{3}\right) \|\cL_\lambda w\|_0^2 + \frac{3}{4 \kappa} \|\Delta w - \Pi_h^{m-2} \Delta w\|_0^2,
    \end{align*}
    hence
    \begin{equation*}
        \<\widetilde \cA_h^c(v_1) - \widetilde \cA_h^c(v_2), w\> \geq \left(1 - \frac{\kappa}{3}\right) \|\Pi_h^m \cL_\lambda w\|_0^2 - \left(1 - \frac{2 \kappa}{3}\right) \|\cL_\lambda w\|_0^2 - \frac{15}{4 \kappa} \|\Delta w - \Pi_h^{m-2} \Delta w\|_0^2.
    \end{equation*}
    One may conclude by proceeding similarly to the proof of Lemma~\ref{lemma:strong_monotonicity}. In particular, the factor $\kappa^{-1}$ in front of the stabilization term ensures that the term $-\frac{15}{4 \kappa} \|\Delta w - \Pi_h^{m-2} \Delta w\|_0^2$ in the above estimate is compensated.
\end{proof}

From now on, we let $\widetilde \eta_*$ be as in the above lemma. As in section~\ref{sec:analysis}, we deduce by applying the Browder--Minty theorem that the scheme \eqref{eq:scheme_alt1} admits a unique solution $\widetilde u_h \in V_h$ as soon as is $\eta \geq \widetilde \eta_*$. The following error estimate holds.

\begin{theorem}
    \label{thm:error_estimate_alt1}
    We assume that $\eta \geq \widetilde \eta_*$. Then, for any $u_I \in V_h^g$,
    \begin{equation*}
        |u_I - \widetilde u_h|_{*, \lambda} \lesssim \eta \kappa^{-2} (|u - u_I|_{*, \lambda} + |u - \Pi_h^m u|_{*, \lambda, h}).
    \end{equation*}
\end{theorem}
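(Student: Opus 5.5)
We follow the proof of Theorem~\ref{thm:error_estimate}, now relying on Lemma~\ref{lemma:strong_monotonicity_alt1}. Let $w := u_I - \widetilde u_h \in V_h^0$. Strong monotonicity \eqref{eq:strong_monotonicity_alt1} and the fact that $\widetilde u_h$ solves \eqref{eq:scheme_alt1} give
\begin{equation*}
    \kappa |w|_{*, \lambda}^2 \lesssim \<\widetilde \cA_h(u_I) - \widetilde \cA_h(\widetilde u_h), w\> = \<\widetilde \cA_h(u_I), w\>.
\end{equation*}
It therefore suffices to show that $\<\widetilde \cA_h(u_I), w\> \lesssim \eta \kappa^{-1} (|u - u_I|_{*, \lambda} + |u - \Pi_h^m u|_{*, \lambda, h}) |w|_{*, \lambda}$. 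The factor $\kappa^{-1}$ here comes from the weight $\eta\kappa^{-1}$ on the stabilization. Combined with the $\kappa^{-1}$ from monotonicity, this produces the announced $\kappa^{-2}$.

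We split $\<\widetilde \cA_h(u_I), w\> = \<\widetilde \cA_h(u_I) - \widetilde \cA_h(\Pi_h^m u), w\> + \<\widetilde \cA_h(\Pi_h^m u), w\>$.

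\textbf{First term.} The Lipschitz bound for $\widetilde F_{\gamma,h}$ follows exactly as in \eqref{eq:lipschitz_continuity}, with $L^2$ stability of $\Pi_h^{m-2}$, $\Pi_h^{m-1}$ and $\Pi_h^m$. We also have $\|\widetilde \cL_{\lambda,h} w\|_0 \lesssim |w|_{*,\lambda}$. The stabilization part is handled by the scaling \eqref{eq:stab_scaling0}--\eqref{eq:stab_scaling2} and Cauchy--Schwarz, using $|v - \Pi_h^m v|_{2,h} \lesssim |v|_{2,h}$ for the $s_{2,h}$ term; this uses $\Pi_h^1(v-\Pi_h^m v)=0$ and the fact that the optimal-polynomial approximation error is bounded by the seminorm. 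Altogether this term is bounded by $\eta\kappa^{-1}|u_I - \Pi_h^m u|_{*,\lambda,h}|w|_{*,\lambda}$, and the triangle inequality converts $|u_I - \Pi_h^m u|_{*,\lambda,h}$ into the two terms on the right-hand side.

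\textbf{Second term.} Since $\Pi_h^m u - \Pi_h^m(\Pi_h^m u) = 0$, both stabilization contributions in $\widetilde\cA_h^s(\Pi_h^m u)$ vanish. This is simpler than in the equal-order case: no $E^0$ term appears, because $s_{0,h}$ now uses $\Pi_h^m$. We then write
\begin{equation*}
    \widetilde F_{\gamma,h}[\Pi_h^m u] = \big(\widetilde F_{\gamma,h}[\Pi_h^m u] - F_\gamma[\Pi_h^m u]\big) + \big(F_\gamma[\Pi_h^m u] - F_\gamma[u]\big),
\end{equation*}
using $F_\gamma[u]=0$. The second difference is bounded in $L^2$ by $|u-\Pi_h^m u|_{*,\lambda,h}$ via \eqref{eq:lipschitz_continuity}.

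For the first difference, the reaction part vanishes since $\Pi_h^m \Pi_h^m u = \Pi_h^m u$. The advection part gives $\gamma b\cdot(\Pi_h^{m-1}-I)\nabla\Pi_h^m u$, which is zero because $\nabla \Pi_h^m u \in \PP_{m-1,h}$. The diffusion part gives $\gamma A:(\Pi_h^{m-2}-I)\nabla^2\Pi_h^m u$, which is zero because $\nabla^2\Pi_h^m u\in\PP_{m-2,h}$. Hence the first difference vanishes identically. This exact polynomial consistency is the reason for choosing these graded projection degrees, and it removes all $E^0$, $E^1$ terms. The remaining integral against $\widetilde\cL_{\lambda,h}w$ is controlled by $\|\widetilde\cL_{\lambda,h}w\|_0\lesssim |w|_{*,\lambda}$.

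\textbf{Main obstacle.} The delicate point is bookkeeping in the first term rather than any new idea. One must check that Lipschitz continuity of $\widetilde F_{\gamma,h}$ holds on the broken space, with $\Pi_h^{m-1}\nabla$ and $\Pi_h^m$ of broken functions bounded via the $|\cdot|_{*,\lambda,h}$ seminorm exactly as in \eqref{eq:lipschitz_continuity}. One must also confirm that the stabilization bound $\widetilde\cA_h^s(u_I-\Pi_h^m u)$ carries only the single factor $\eta\kappa^{-1}$, so that the final constant is $\eta\kappa^{-2}$ and not worse. Dividing by $\kappa|w|_{*,\lambda}$ then concludes.
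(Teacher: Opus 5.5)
Your proposal is correct and follows the same route as the paper. Lemma~\ref{lemma:strong_monotonicity_alt1} supplies one factor $\kappa^{-1}$, and the $\eta\kappa^{-1}$-weighted stabilization difference $\<\widetilde \cA_h^s(u_I) - \widetilde \cA_h^s(\Pi_h^m u), w\>$ supplies the other, exactly as in the paper's adaptation of Theorem~\ref{thm:error_estimate}. Your added observations that $\widetilde F_{\gamma,h}[\Pi_h^m u] = F_\gamma[\Pi_h^m u]$ and $\widetilde \cA_h^s(\Pi_h^m u) = 0$ identically correctly fill in what the paper's terse proof leaves implicit, and explain why no $E^0_{j,h}$ or $E^1_{j,h}$ terms appear.
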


\begin{proof}
    The proof is a straightforward adaptation of the one of Theorem~\ref{thm:error_estimate}. The main difference between the statements of Theorem~\ref{thm:error_estimate_alt1} and Theorem~\ref{thm:error_estimate} is the exponent on $\kappa$. Here, one negative power of $\kappa$ arises from the strong monotonicity estimate \eqref{eq:strong_monotonicity_alt1}, similarly to the case of Theorem~\ref{thm:error_estimate}, and one additional negative power of $\kappa$ arises when upper bounding the term $\eta \kappa^{-1} \<\widetilde \cA_h^s(u_I) - \widetilde \cA_h^s(\Pi_h^m u), u_I - u_h\>$.
\end{proof}

\subsection{Weak imposition of the boundary condition}
\label{subsec:nitsche}

We now consider a variant of the scheme \eqref{eq:scheme} in which the Dirichlet boundary condition \eqref{eq:bc} is imposed weakly, through a Nitsche-type boundary penalty method. Here, we consider only the cases $d=2$ and $d=3$.

We denote by $\cS$ the set of sides of $\Omega$, and, for every $S \in \cS$, by $\Sigma_S$ the set of ridges of $\Omega$ (vertices if $d=2$ and edges if $d=3$) which are also sides of  $S$.

We define the space
\begin{equation}
    \label{eq:boundary_space}
    V_{\partial \Omega} := \left\{v \in \prod_{S \in \cS} H^{3/2}(S) \mid \forall S_1, S_2 \in \cS,\, \forall \sigma \in \Sigma_{S_1} \cap \Sigma_{S_2},\, (v_{S_1})_{|\sigma} = (v_{S_2})_{|\sigma}\right\},
\end{equation}
equipped with the norm $\|\cdot\|_{V_{\partial \Omega}}$ defined by
\begin{equation*}
    \|v\|_{V_{\partial \Omega}}^2 := \sum_{S \in \cS} \|v\|_{H^{3/2}(S)}^2.
\end{equation*}

According to \cite{bernardi2007}, the space
$V_{\partial \Omega}$ allows for the construction of a continuous trace operator for $H^2(\Omega)$. Note that  only the value trace is considered, which is nonstandard in the $H^2$ setting.

\begin{theorem}
    \label{thm:extension}
    The trace operator
    \begin{equation*}
        T \colon H^2(\Omega) \to \prod_{S \in \cS} H^{3/2}(S), \quad v \mapsto (v_{|S})_{S \in \cS},
    \end{equation*}
%
    is continuous and onto the space $V_{\partial \Omega}$, and admits a continuous right inverse $E \colon V_{\partial \Omega} \to H^2(\Omega)$. 
\end{theorem}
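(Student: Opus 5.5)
The plan is to reduce the statement to known trace and lifting results on polygons and polyhedra, following the approach of Grisvard and of \cite{bernardi2007}. There are three claims: continuity of $T$ into $\prod_{S} H^{3/2}(S)$, the fact that the range is contained in $V_{\partial\Omega}$, and surjectivity onto $V_{\partial\Omega}$ with a bounded right inverse $E$.

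\emph{Continuity and range.} For each side $S$ we first localize with a partition of unity subordinate to neighbourhoods of $S$ and, for $d=3$, of its edges and vertices. Near an interior point of $S$, $\Omega$ coincides with a half-space, and the classical trace theorem gives $\|v_{|S}\|_{H^{3/2}} \lesssim \|v\|_{2,\Omega}$. Near the boundary of $S$, we use a Stein-type extension of $v$ from the Lipschitz domain $\Omega$ to $H^2(\RR^d)$. The hyperplane trace of this extension is in $H^{3/2}$ of the whole hyperplane, and restricting to $S$ preserves the bound, so $T$ is continuous. The compatibility condition $(v_{S_1})_{|\sigma}=(v_{S_2})_{|\sigma}$ holds because $v\in H^2(\Omega)$ has a continuous representative when $d\le 3$ (Sobolev embedding), so both side traces agree with $v_{|\sigma}$ pointwise. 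Alternatively we argue by density of smooth functions and continuity of the trace $H^{3/2}(S)\to H^{1}(\sigma)$.

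\emph{Right inverse.} This is the main step. The key observation is that, since only the value trace is prescribed and no normal derivative, the compatibility conditions at ridges reduce to continuity alone. By contrast, the full $(v,\partial_n v)$ trace requires additional integral (Grisvard-type) compatibility conditions at corners. Given $g=(g_S)\in V_{\partial\Omega}$, we build $E g$ in three stages.
\begin{enumerate}
\item Handle the vertices. For $d\le 3$, $H^{3/2}(S)$ functions are continuous and the values at vertices are well defined. We subtract an $H^2$ function, for example a smooth function localized near each vertex, matching those values, and reduce to data vanishing at vertices.
\item In $d=3$, handle the edges. On each edge $\sigma$ the common trace lies in $H^{1}(\sigma)$ and vanishes at its endpoints. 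We lift it into a wedge neighbourhood of $\sigma$ with an $H^2$ function whose traces on the two adjacent faces coincide with a prescribed $H^{3/2}$ extension. This reduces the problem to face data vanishing on all ridges, that is, data in the Lions--Magenes-type space $H^{3/2}_{00}$ on each face, possibly with a weaker vanishing condition.
\item Lift each face datum separately. Extend it by zero to the whole hyperplane, lift it with the standard half-space lifting (with $\partial_n$ set to zero), and cut it off so that it does not affect the other faces.
\end{enumerate}

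The delicate point, and where I expect the main obstacle, is the behaviour of $H^{3/2}$ data at ridges with arbitrary dihedral angle. Zero extension from $S$ need not preserve $H^{3/2}$ when the datum merely vanishes on $\partial S$; a subtraction of a first-order term in the distance to $\sigma$ may be needed, and the tangential derivative of $g_S$ across the ridge has to be accommodated. I would try first to avoid this by working in local coordinates adapted to each wedge and lifting the datum on one face via a Bernardi--Dauge-type extension that also prescribes a suitable normal derivative. That normal derivative is chosen so that the neighbouring face sees exactly $g_{S_2}$. For $d=2$ this reduces to a finite-dimensional compatibility at each vertex, solvable because the two edge directions are independent. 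In all cases, for the final step I would invoke the construction of \cite{bernardi2007} directly rather than redo it.
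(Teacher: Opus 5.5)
Your proposal is correct and, apart from your direct (and valid) argument for continuity and for the range lying in $V_{\partial \Omega}$, it is the paper's argument. The paper also takes the right inverse from \cite{bernardi2007} (Corollaries~5.8 and~6.10, with a single trace, $s=2$, $p=2$), and its verification rests on the same observation you make: with only the value trace prescribed, the borderline ridge operators are first-order derivatives along non-collinear directions and hence vanish, so the only compatibility conditions left are continuity at vertices and edges. Your own stages~2--3 would not stand on their own, since a cut-off standard half-space lifting cannot vanish on the adjacent face, which meets $S$ at the ridge. Likewise, at the borderline order $s-2/p=1$ the vertex condition is not a finite-dimensional pointwise system but a potential integral (Grisvard-type) condition that happens to be void here, so you are right to defer that step to \cite{bernardi2007}.
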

\begin{proof}
The assertion is a special case of the general results established in~\cite[Chapter~I]{bernardi2007} given by Corollary~5.8 and Corollary~6.10 in the $d=2$ and $d=3$ case, respectively. Specifically, it follows by verifying  the conditions in  \cite[Theorem 5.7]{bernardi2007} and \cite[Theorem 6.9]{bernardi2007}.

In~\cite{bernardi2007}, the general case of traces from $\mathbb{W}^{{\rm s,p(m)}}(\partial\Omega) := \prod_{S \in \cS} \prod_{k=0}^{{\rm m} - 1} W^{{\rm s} - k - \frac{1}{\rm p}, {\rm p}}(S)$ is considered, where $1<{\rm p}<\infty$, ${\rm s}>1/{\rm p}$, and  ${\rm m}$ denotes the number of traces and has to satisfy $0\le {\rm m}-1<{\rm s}-1/{\rm p}$. In our setting, ${\rm m}=1$, ${\rm s}=2$, and ${\rm p}=2$.

We deal with $d=2$ first, verifying the conditions required by \cite[Theorem~5.7]{bernardi2007}. In this case, the only matching condition required is continuity at the vertices which is already  imposed in the definition of $V_{\partial \Omega}$. This corresponds to \cite[(5.7)]{bernardi2007} with ${\rm n}=0<{\rm s}-2/{\rm p}=1$ and is meaningful as $H^{3/2}(S)$ is (comfortably) continuously embedded into the space of functions which are continuous in $S$. Condition \cite[(5.8)]{bernardi2007} is void when ${\rm m}= 1$, because, according to \cite[(5.1)]{bernardi2007}, the operators $\cL_-$ and $\cL_+$ are proportional to the first order partial derivation operators in the respective, non colinear directions $\tau_{\ell_-(i)}$ and $\tau_{\ell_+(i)}$, and then the condition $\cL_- + \cL_+ = 0$ imposes that they both coincide with the zero operator. This is reflected by the fact that no condition appears for the case ${\rm m} = 1$ and ${\rm n} = 2$ in \cite[Table~5.1]{bernardi2007}.

In the case $d=3$, we need to verify the conditions in \cite[Theorem~6.9]{bernardi2007}. The matching conditions (i) in \cite[Theorem~6.9]{bernardi2007} correspond on edges to the conditions \cite[(5.7)]{bernardi2007} on vertices already discussed in the $d=2$ case. These reduce to the case ${\rm n}=0<{\rm s}-2/{\rm p}=1$ imposing the continuity at each edge $\sigma$ as required in the definition of $V_{\partial \Omega}$. Note that, also in this case, as $v\in H^{3/2}(S)$ implying $v|_\sigma\in H^1(\sigma)$ for all ridges (edges) $\sigma \in \partial S$, we have that the trace $v|_\sigma$ can be selected as a  continuous function. As before, there are no extra conditions on edges as the differential operators $\cL_-$ and $\cL_+$ involved in \cite[(6.4)]{bernardi2007} are zero. We further need to verify conditions (ii) in \cite[Theorem~6.9]{bernardi2007} which pertain to vertices. In particular, we have only the matching condition corresponding to ${\rm n}=0<{\rm s}-3/{\rm p}=1/2$, requiring continuity at each vertex. However, this follows from the fact that edge traces are compatible in the sense of continuous functions.
\end{proof}

Let us define the set
\begin{equation*}
    \cS_{\partial, h} := \{s \text{ side of } K \mid K \in \cK_h,\, s \subset \partial \Omega\}
\end{equation*}
of boundary sides of the mesh $\cK_h$, the space $\PP_{m, h}(\partial \Omega)$ of functions $p \in L^2(\partial \Omega)$ satisfying $p_{|s} \in \PP_m(s)$ on each $s \in \cS_{\partial, h}$, the $L^2$ projection operator $\Pi_{\partial, h}^m$ from $L^2(\partial \Omega)$ to $\PP_{m, h}(\partial \Omega)$, and the space $V_{\partial, h} := \{v_{|\partial \Omega} \mid v \in V_h\}$. For any $v \in L^2(\partial \Omega)$, let
\begin{equation*}
    |v|_{\partial, \lambda, h}^2 := \sum_{s \in \cS_{\partial, h}} (h_s^{-3} + \lambda h_s^{-1}) \|v\|_{0, s}^2.
\end{equation*}

\begin{proposition}\label{prop:lift}
For any $v \in V_h$, there exists $v' \in H^2(\Omega)$ such that $v'|_{\partial \Omega} = v$ and $|v'|_2^2 \lesssim  |v|_{\partial, 0, h}^2$.
\end{proposition}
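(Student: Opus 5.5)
The plan is to use the trace theorem (Theorem~\ref{thm:extension}) and move the problem to the boundary. Let $v \in V_h$ and set $\phi := v_{|\partial\Omega}$. Since $v \in H^2(\Omega)$, we have $\phi = Tv \in V_{\partial\Omega}$, and $v'' := E\phi \in H^2(\Omega)$ satisfies $v''_{|\partial\Omega} = \phi$ and $\|v''\|_2 \lesssim \|\phi\|_{V_{\partial\Omega}}$. The estimate then reduces to the boundary bound
\begin{equation*}
    \sum_{S \in \cS} \|\phi\|_{H^{3/2}(S)}^2 \lesssim |\phi|_{\partial,0,h}^2 = \sum_{s \in \cS_{\partial,h}} h_s^{-3} \|\phi\|_{0,s}^2,
\end{equation*}
after which $v' := v''$ gives $|v'|_2 \le \|v''\|_2 \lesssim |v|_{\partial,0,h}$.

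To obtain the boundary bound, I would fix a side $S$ of $\Omega$ and interpolate: $\|\phi\|_{H^{3/2}(S)}^2 \lesssim \|\phi\|_{H^1(S)}\|\phi\|_{H^2(S)}$. The broken norms I can control come from the side inverse inequality \eqref{eq:inverse_ineq_side}, namely $|\phi|_{k,s} \lesssim h_s^{-k}\|\phi\|_{0,s}$ for $k=1,2$ on each mesh side $s \subset S$. This handles the pieces of $\phi$ that are regular on each mesh side. The difficulty is that $\phi$ is only piecewise smooth on $S$, so I need $\phi_{|S} \in H^{3/2}(S)$ together with control of the cross-side part of the Sobolev--Slobodeckij seminorm. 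Here I use $H^2$-conformity: $\nabla v$ is continuous across cell interfaces (in the trace sense, and for the concrete VEM spaces $v$ is $C^1$), so $\phi_{|S}$ belongs to $H^2(S)$ and not merely to a broken space. For $d=2$, $\phi_{|S} \in H^2(S)$ follows from $v \in H^2$ once the tangential derivative of $v$ along $S$ has a well-defined $H^1$ trace (which holds for the VEM spaces). Summing the inverse inequality over the sides $s \subset S$ then gives $\|\phi\|_{2,S}^2 \lesssim \sum_s h_s^{-4}\|\phi\|_{0,s}^2$ and $\|\phi\|_{1,S}^2 \lesssim \sum_s h_s^{-2}\|\phi\|_{0,s}^2$.

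The interpolation of global sums, however, does not produce the local weight $h_s^{-3}$ directly. So rather than interpolating globally, I would use a localized estimate of the $H^{3/2}$ seminorm: split the double integral in the Slobodeckij seminorm of $\nabla\phi$ into near pairs (points in the same or neighbouring mesh sides) and far pairs. The near pairs are bounded locally by $h_s^{-1}|\phi|_{1,\omega_s}^2 + h_s|\phi|_{2,\omega_s}^2 \lesssim h_s^{-3}\|\phi\|_{0,\omega_s}^2$ via the inverse inequalities. The far pairs are bounded by $\int |\nabla\phi(x)|^2\int_{|x-y|>ch_x}|x-y|^{-d}\,dy\,dx \lesssim \sum_s h_s^{-1}|\phi|_{1,s}^2$, using shape regularity (Assumption~\ref{ass:reg}) to compare neighbouring side sizes. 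The lower-order part satisfies $\|\phi\|_{1,S}^2 \lesssim \sum_s h_s^{-2}\|\phi\|_{0,s}^2 \lesssim \sum_s h_s^{-3}\|\phi\|_{0,s}^2$, because $h_s \lesssim \diam\Omega$.

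I expect this localization of the fractional norm, and the verification that $\phi_{|S}$ is genuinely $H^2$ (or at least $H^{3/2}$) across interior mesh ridges of $S$, to be the main obstacle. If the direct localization proves awkward, I would instead construct the lift cellwise: use a $C^1$-type local extension near each boundary side with bump functions of width $h_s$, so that each local piece contributes $h_s^{-4}\|\phi\|_{0,s}^2 \cdot h_s$, which is exactly $h_s^{-3}\|\phi\|_{0,s}^2$. I would then apply Theorem~\ref{thm:extension} only to correct the compatibility at the corners of $\Omega$.
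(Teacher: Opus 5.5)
Your proposal is correct and follows the paper's route: extend via Theorem~\ref{thm:extension}, localize the $H^{3/2}(S)$ seminorm of the trace into patch contributions $|\cdot|_{3/2,\omega_s}^2$ plus a far-field term $h_s^{-1}\|\nabla v\|_{0,s}^2$, and then apply a scaled interpolation inequality and the side inverse inequalities \eqref{eq:inverse_ineq_side}. The only differences are that your near/far splitting of the Slobodeckij double integral reproves the Faermann-type localization that the paper cites (\cite{borthagaray2021,faermann2002}), and that your fallback cellwise construction is not needed.
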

\begin{proof}
By Theorem~\ref{thm:extension}, there exists $v' \in H^2(\Omega)$ such that $v' = v$ on $\partial \Omega$ and $|v'|_2^2 \lesssim  \|v\|_{V_{\partial \Omega}}^2$, and it remains to localize the contribution from each side $S \in \cS$. We exploit the fractional a norm localization result~\cite[Equation (3.2)]{borthagaray2021}, see also \cite{faermann2002} for the original proof. Let $S \in \cS$ and for any $s \in \cS_{\partial, h}$ with $s \subset S$, denote by $\omega_s:=\cup \{s' \in \cS_{\partial, h} \mid s' \subset S,\, s'\cap s\neq \emptyset \}$ the patch of coplanar boundary sides surrounding $s$. Then, from~\cite[Equation (3.2)]{borthagaray2021}, the shape-regularity Assumption~\ref{ass:reg}, and a Sobolev interpolation inequality~\cite{adams2003}, we obtain
\begin{align*}
|v|_{H^{3/2}(S)}^2 &\lesssim\sum_{s \in \cS_{\partial, h},\, s \subset S}
\left(|v|_{3/2,\omega_s}^2+h_s^{-1}|\nabla v|_{0,s}^2\right)\\
&\lesssim \sum_{s \in \cS_{\partial, h},\, s \subset S}
\left(h_s|v|_{2,s}^2+h_s^{-1}|\nabla v|_{0,s}^2\right)
\end{align*}
depending only on $(d-1)$ and on shape-regularity. Thus, using the inverse inequalities (Assumption~\ref{assum:inverse_ineq}), we conclude
\begin{align*}
 \|v\|_{V_{\partial \Omega}}^2&=\sum_{S \in \cS}\|v\|_{H^{3/2}(S)}^2 \lesssim
\sum_{s \in \cS_{\partial, h}}\left(h_s|v|_{2,s}^2+h_s^{-1}|\nabla v|_{0,s}^2+\|v\|_{1,s}^2\right)
\lesssim \sum_{s \in \cS_{\partial, h}} h_s^{-3}\|v\|_{0,s}.
\end{align*}

\end{proof}

We assume that we are given a symmetric bilinear form $s_{\partial, \lambda, h} \colon (V_{\partial, h} + \PP_{m, h}(\partial \Omega))^2 \to \RR$ satisfying the following assumption.

\begin{assumption}[boundary stabilization form scaling]
    For any $v \in V_{\partial, h} + \PP_{m, h}(\partial \Omega)$, one has
    \begin{equation}
        \label{eq:boundary_stab_scaling}
        s_{\partial, \lambda, h}(v, v) \eqsim |v|_{\partial, \lambda, h}^2.
    \end{equation}
\end{assumption}

Let us define operators $\cA_h^b$ and $\cA_h^w$ from $V_h + \PP_{m, h}(\Omega)$ to $(V_h)^*$ by
\begin{align*}
    \<\cA_h^b(v), w\> &:= \sum_{s \in \cS_{\partial, h}} (h_s^{-3} + \lambda h_s^{-1}) \int_s (\Pi_s^m v - g) \Pi_s^m w\, d x + s_{\partial, \lambda, h}(v - \Pi_{\partial, h}^m v, w - \Pi_{\partial, h}^m w), \\
    \<\cA_h^w(v), w\> &:= \<\cA_h(v), w\> + \eta \kappa^{-1} \<\cA_h^b(v), w\>.
\end{align*}
We consider the scheme
\begin{equation}
    \label{eq:nitsche_scheme}
    \text{find } u_h^w \in V_h \text{ s.t.\ } \forall v \in V_h,\, \<\cA_h^w(u_h^w), v\> = 0.
\end{equation}

In our analysis of the scheme \eqref{eq:nitsche_scheme}, we rely on the following generalization of the Miranda--Talenti inequality \eqref{eq:miranda_talenti_lambda} to functions of $V_h$, that do not necessarily belong to $H_0^1(\Omega)$.

\begin{proposition}[Miranda--Talenti inequality, boundary penalty case]
    For any $v \in V_h$,
    \begin{equation}
        \label{eq:nitsche_miranda_talenti_lambda}
        |v|_{*, \lambda}^2 - \|\cL_\lambda v\|_0^2 \lesssim |v|_{\partial, \lambda, h} |v|_{*, \lambda}.
    \end{equation}
\end{proposition}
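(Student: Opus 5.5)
Fix $v \in V_h$ and split it as $v = v_0 + v'$. Here $v'\in H^2(\Omega)$ is the lifting of $v_{|\partial\Omega}$ given by Proposition~\ref{prop:lift}, so $v'_{|\partial\Omega} = v_{|\partial\Omega}$ and $|v'|_2 \lesssim |v|_{\partial,0,h}$. The remainder $v_0 := v - v' \in V^0$ vanishes on the boundary. We also need lower-order control of $v'$, namely
\begin{equation*}
\lambda^{1/2}|v'|_1 + \lambda\|v'\|_0 \lesssim |v|_{\partial,\lambda,h}.
\end{equation*}
To get it, I would construct the lifting with a cutoff: extend the boundary data into a layer of width comparable to the local mesh size, using a local extension on each boundary cell combined with the inverse inequalities \eqref{eq:inverse_ineq_side}. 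Each boundary side $s$ should then contribute $\|v'\|_{0}^2 \lesssim h_s\|v\|_{0,s}^2$ and $|v'|_1^2\lesssim h_s^{-1}\|v\|_{0,s}^2$, alongside the $H^2$ bound. This gives $\lambda^2\|v'\|_0^2 + \lambda |v'|_1^2 \lesssim \sum_s(\lambda^2 h_s+\lambda h_s^{-1})\|v\|_{0,s}^2$. The term $\lambda^2 h_s$ is $\le \lambda h_s^{-1}$ when $\lambda h_s^2\lesssim 1$. Otherwise I would take the layer width to be $\lambda^{-1/2}$ instead of $h_s$, which yields contributions $(\lambda^{3/2}+\dots)\|v\|_{0,s}^2$. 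Some care is needed here; this is the step I expect to be the main obstacle. It amounts to checking that Proposition~\ref{prop:lift} can be strengthened to a full $|\cdot|_{*,\lambda}$ bound $|v'|_{*,\lambda}\lesssim |v|_{\partial,\lambda,h}$. Once the lifting is reorganized this way, that is the only genuinely new ingredient.

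With this in hand, the proof is an algebraic comparison of quadratic forms. Write $Q(w) := |w|_{*,\lambda}^2 - \|\cL_\lambda w\|_0^2$, a quadratic form on $H^2(\Omega)$ with associated symmetric bilinear form $B(w,z) := (w,z)_{*,\lambda} - (\cL_\lambda w,\cL_\lambda z)_0$. By the Miranda--Talenti inequality \eqref{eq:miranda_talenti_lambda}, $Q(v_0)\le 0$. Therefore
\begin{equation*}
Q(v) = Q(v_0) + 2B(v_0,v') + Q(v') \le 2B(v_0,v') + Q(v').
\end{equation*}
By Cauchy--Schwarz and $\|\cL_\lambda z\|_0 \lesssim |z|_{*,\lambda}$, we have $|B(w,z)|\lesssim |w|_{*,\lambda}|z|_{*,\lambda}$. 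Hence
\begin{equation*}
Q(v)\lesssim |v_0|_{*,\lambda}|v'|_{*,\lambda} + |v'|_{*,\lambda}^2 \lesssim |v|_{*,\lambda}|v'|_{*,\lambda} + |v'|_{*,\lambda}^2,
\end{equation*}
using $|v_0|_{*,\lambda}\le |v|_{*,\lambda}+|v'|_{*,\lambda}$.

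It remains to absorb the term $|v'|_{*,\lambda}^2$, which is quadratic in the boundary quantity. If $|v'|_{*,\lambda}\le |v|_{*,\lambda}$, it is bounded by $|v|_{*,\lambda}|v'|_{*,\lambda}$ and the conclusion follows from the bound $|v'|_{*,\lambda}\lesssim|v|_{\partial,\lambda,h}$ of the first step. If instead $|v|_{*,\lambda}<|v'|_{*,\lambda}$, I would use a trace-inverse estimate instead. By \eqref{eq:inverse_ineq_side} and a discrete trace inequality on boundary cells, $|v|_{\partial,\lambda,h}\lesssim h^{-3/2}\|v\|_{0,K}$-type quantities. Since these are not controlled by $|v|_{*,\lambda}$, a better route is to avoid this case entirely by rewriting $Q(v)$ directly. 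Integrating by parts as in the proof of \eqref{eq:miranda_talenti_lambda}, $Q(v)$ consists of $|v|_2^2-\|\Delta v\|_0^2$ plus boundary terms of the form $2\lambda\int_{\partial\Omega} v\,\partial_n v$. The Hessian part equals $Q_0(v_0)+2B_0(v_0,v')+Q_0(v')$, where $Q_0,B_0$ denote the $\lambda=0$ versions of $Q,B$. Each term involving $v'$ carries at least one factor $|v'|_2$, and the pure $Q_0(v')$ term can be bounded by $|v'|_2(|v|_2+|v_0|_2)$ after writing $v'=v-v_0$ in one slot. Finally, the $\lambda$ boundary term is estimated by $\lambda\|v\|_{0,\partial\Omega}\|\nabla v\|_{0,\partial\Omega} \lesssim \lambda^{1/2}\big(\sum_s h_s^{-1}\|v\|_{0,s}^2\big)^{1/2}\lambda^{1/2}|v|_{1}$-type bounds, using discrete trace inequalities for $\nabla v$ on boundary cells, which yields the claimed $|v|_{\partial,\lambda,h}|v|_{*,\lambda}$.
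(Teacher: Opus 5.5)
Your final route has the same skeleton as the paper's proof. You integrate by parts, so that $Q(v)=|v|_2^2-\|\Delta v\|_0^2+2\lambda\int_{\partial\Omega}v\,\nabla v\cdot\bn$. You treat the Hessian part with the lifting of Proposition~\ref{prop:lift} and Miranda--Talenti for $v_0$. You bound the boundary term side by side with Cauchy--Schwarz (weights $h_s^{\mp1}$), a trace inequality and \eqref{eq:inverse_ineq_cell}. However, the Hessian step does not close as you wrote it.

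\textbf{The gap.} The bilinear expansion gives $Q_0(v)\le 2B_0(v_0,v')+Q_0(v')\lesssim|v'|_2(|v|_2+|v_0|_2)$, and $|v_0|_2$ is not controlled by $|v|_2$. The only available bound, $|v_0|_2\le|v|_2+|v'|_2$, brings back exactly the term $|v'|_2^2\lesssim|v|_{\partial,0,h}^2$ that you said this route would avoid. Take an affine $v\in V_h$ with nonzero trace: then $|v|_2=0$ and $|v_0|_2=|v'|_2$. Your chain then only yields $Q_0(v)\lesssim|v'|_2^2$, instead of the required $Q_0(v)\lesssim|v|_{\partial,0,h}|v|_2=0$.

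\textbf{The fix.} The missing step is elementary: $Q_0(v)\le|v|_2^2$ always.
\begin{itemize}
\item If $|v'|_2>|v|_2$, this gives $Q_0(v)\le|v|_2|v'|_2$ directly.
\item If $|v'|_2\le|v|_2$, then $|v_0|_2\le 2|v|_2$ and your bound suffices.
\end{itemize}
The paper avoids any term quadratic in $v'$ by a factorization. From $|v|_2\le\|\Delta v_0\|_0+|v'|_2\le\|\Delta v\|_0+(1+\sqrt{d})|v'|_2$, the difference $|v|_2-\|\Delta v\|_0$ is bounded linearly by $|v'|_2$. Then $|v|_2^2-\|\Delta v\|_0^2=(|v|_2-\|\Delta v\|_0)(|v|_2+\|\Delta v\|_0)$, and the second factor is at most $(1+\sqrt{d})|v|_2$. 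With either fix, your second route is the paper's proof.

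\textbf{Your first route.} You were right to abandon it, and for a stronger reason than the one you give. The strengthened lifting $|v'|_{*,\lambda}\lesssim|v|_{\partial,\lambda,h}$ is false with constants independent of $\lambda h^2$. The multiplicative trace inequality $\|v'\|_{0,\partial\Omega}^2\lesssim\|v'\|_0(\|v'\|_0+|v'|_1)$ shows that every $H^2$ extension satisfies $|v'|_{*,\lambda}^2\gtrsim\lambda^{3/2}\|v\|_{0,\partial\Omega}^2$ when $\lambda\ge1$. On a quasi-uniform boundary mesh, by contrast, $|v|_{\partial,\lambda,h}^2\simeq(h^{-3}+\lambda h^{-1})\|v\|_{0,\partial\Omega}^2$, which is much smaller when $\lambda h^2\gg1$. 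The observation $Q(v)\le|v|_{*,\lambda}^2$ would settle your case distinction there too, but it cannot rescue that route. This is why the lower-order terms must never go through the lifting. As in the paper, they enter only through $2\lambda\int_{\partial\Omega}v\,\nabla v\cdot\bn$, estimated directly on $v$.
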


\begin{proof}
    We distinguish between the cases $\lambda = 0$ and $\lambda > 0$.

    \emph{Case $\lambda = 0$.}
By Proposition~\ref{prop:lift}  there exists $v' \in H^2(\Omega)$ such that $v' = v$ on $\partial \Omega$ and $|v'|_2 \lesssim |v|_{\partial, 0, h}^2 $.

    Let $v_0 := v - v'$, so that $|v|_2 \leq |v_0|_2 + |v'|_2$. By the standard Miranda--Talenti estimate and then the triangle inequality, $|v_0|_2 \leq \|\Delta v_0\|_0 \leq \|\Delta v\|_0 + \|\Delta v'\|_0$, so $|v|_2 \leq \|\Delta v\|_0 + \|\Delta v'\|_0 + |v'|_2$, thus $|v|_2 - \|\Delta v\|_0 \lesssim |v'|_2 \lesssim |v|_{\partial, 0, h}$ and $|v|_2^2 - \|\Delta v\|_0^2 = (|v|_2 - \|\Delta v\|_0) (|v|_2 + \|\Delta v\|_0) \lesssim |v|_{\partial, 0, h} |v|_2$.

    \emph{Case $\lambda > 0$.} One has
    \begin{equation*}
        \|\cL_\lambda v\|_0^2
        = \|\Delta v\|_0^2 - 2 \lambda \int_\Omega v \Delta v\, d x + \lambda^2 \|v\|_0^2
        = \|\Delta v\|_0^2 + 2 \lambda |v|_1^2 + \lambda^2 \|v\|_0^2 - 2 \lambda \int_{\partial \Omega} v \nabla v \cdot \bn\, d x,
    \end{equation*}
    so that
    \begin{equation*}
        |v|_{*, \lambda}^2 - \|\cL_\lambda v\|_0^2
        = |v|_2^2 - \|\Delta v\|_0^2 + 2 \lambda \int_{\partial \Omega} v \nabla v \cdot \bn\, d x.
    \end{equation*}
    From the analysis of the case $\lambda = 0$, $|v|_2^2 - \|\Delta v\|_0^2 \lesssim |v|_{\partial, 0, h} |v|_2 \leq |v|_{\partial, \lambda, h} |v|_{*, \lambda}$. On the other hand,
    \begin{equation*}
        2 \lambda \int_{\partial \Omega} v \nabla v \cdot \bn\, d x
        \lesssim \left(\lambda \sum_{s \in \cS_{\partial, h}} h_s^{-1} \|v\|_{0, s}^2\right)^{1/2} \left(\lambda \sum_{s \in \cS_{\partial, h}} h_s \|\nabla v\|_{0, s}^2\right)^{1/2}.
    \end{equation*}
    For any $s \in \cS_{\partial, h}$, by a standard trace inequality and the inverse inequality \eqref{eq:inverse_ineq_cell}, one has $h_s \|\nabla v\|_{0, s}^2 \lesssim \|\nabla v\|_{0, K}^2$, where $K$ denotes the cell of $\cK_h$ whose $s$ is a side. We deduce that $2 \lambda \int_{\partial \Omega} v \nabla v \cdot \bn\, d x \lesssim |v|_{\partial, \lambda, h} |v|_{*, \lambda}$, which concludes the proof.
\end{proof}

The new Miranda--Talenti inequality \eqref{eq:nitsche_miranda_talenti_lambda} allows us to prove a suitable strong monotonicity estimate for the operator $\cA_h^w$.

\begin{lemma}[strong monotonicity, boundary penalty case]
    \label{lemma:nitsche_strong_monotonicity}
    There exists $\eta_*^w \eqsim 1$ such that if $\eta \geq \eta_*^w$, it holds for any $v_1$, $v_2 \in V_h$ that
    \begin{equation}
        \label{eq:nitsche_strong_monotonicity}
        \<\cA_h^w(v_1) - \cA_h^w(v_2), v_1 - v_2\> \gtrsim \kappa |v_1 - v_2|_{*, \lambda}^2 + \kappa^{-1} |v_1 - v_2|_{\partial, \lambda, h}^2.
    \end{equation}
\end{lemma}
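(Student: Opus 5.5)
Let $w := v_1 - v_2 \in V_h$. Note that $w$ need not vanish on $\partial \Omega$, but the $g$-terms cancel in $\cA_h^b$. The plan is to split the left-hand side of \eqref{eq:nitsche_strong_monotonicity} into three parts: the consistency part $\<\cA_h^c(v_1) - \cA_h^c(v_2), w\>$, the interior stabilization $\eta \<\cA_h^s(v_1) - \cA_h^s(v_2), w\>$, and the boundary term $\eta \kappa^{-1} \<\cA_h^b(v_1) - \cA_h^b(v_2), w\>$. We then argue as in the proof of Lemma~\ref{lemma:strong_monotonicity}, with the Miranda--Talenti inequality \eqref{eq:miranda_talenti_lambda} replaced by \eqref{eq:nitsche_miranda_talenti_lambda}.

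\emph{Boundary term.} Since $g$ cancels,
\begin{equation*}
\<\cA_h^b(v_1) - \cA_h^b(v_2), w\> = \sum_{s} (h_s^{-3} + \lambda h_s^{-1}) \|\Pi_s^m w\|_{0,s}^2 + s_{\partial, \lambda, h}(w - \Pi_{\partial,h}^m w, w - \Pi_{\partial,h}^m w).
\end{equation*}
By \eqref{eq:boundary_stab_scaling} and the Pythagorean identity $\|w\|_{0,s}^2 = \|\Pi_s^m w\|_{0,s}^2 + \|w - \Pi_s^m w\|_{0,s}^2$, this is $\gtrsim |w|_{\partial, \lambda, h}^2$. Hence the boundary contribution is bounded below by $c_b \eta \kappa^{-1} |w|_{\partial, \lambda, h}^2$.

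\emph{Interior terms.} The linear approximation lemma \eqref{eq:linear_approximation} and the stabilization argument in the proof of Lemma~\ref{lemma:strong_monotonicity} never used $w \in H_0^1$. So for $\eta$ large enough,
\begin{equation*}
\<\cA_h^c(v_1) - \cA_h^c(v_2), w\> + \eta\<\cA_h^s(v_1) - \cA_h^s(v_2), w\> \geq \|\cL_\lambda w\|_0^2 - (1-\kappa) |w|_{*,\lambda} \|\cL_\lambda w\|_0.
\end{equation*}
Young's inequality gives $(1-\kappa) |w|_{*,\lambda} \|\cL_\lambda w\|_0 \le \frac{1-\kappa}{2}(|w|_{*,\lambda}^2 + \|\cL_\lambda w\|_0^2)$. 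The right-hand side is therefore at least
\begin{equation*}
\tfrac{1+\kappa}{2}\|\cL_\lambda w\|_0^2 - \tfrac{1-\kappa}{2}|w|_{*,\lambda}^2 \ge \kappa |w|_{*,\lambda}^2 - \tfrac{1+\kappa}{2}\big(|w|_{*,\lambda}^2 - \|\cL_\lambda w\|_0^2\big).
\end{equation*}
The deficit $|w|_{*,\lambda}^2 - \|\cL_\lambda w\|_0^2$ is no longer nonpositive. This is the main obstacle, and it is exactly where \eqref{eq:nitsche_miranda_talenti_lambda} enters: it bounds the deficit by $C |w|_{\partial, \lambda, h} |w|_{*, \lambda}$. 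Young's inequality then gives
\begin{equation*}
C |w|_{\partial, \lambda, h} |w|_{*, \lambda} \le \tfrac{\kappa}{2}|w|_{*,\lambda}^2 + \tfrac{C^2}{2\kappa}|w|_{\partial,\lambda,h}^2 .
\end{equation*}
This term has precisely the $\kappa^{-1}$ scaling of the boundary penalty, which explains the factor $\eta\kappa^{-1}$ in the definition of $\cA_h^w$.

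\emph{Conclusion.} Summing, we obtain a lower bound of $\frac{\kappa}{2}|w|_{*,\lambda}^2 + (c_b \eta - C^2/2)\kappa^{-1}|w|_{\partial,\lambda,h}^2$. Choosing $\eta_*^w \eqsim 1$ larger than both the $\eta_*$ of Lemma~\ref{lemma:strong_monotonicity} and $C^2/c_b$ yields \eqref{eq:nitsche_strong_monotonicity}. One point needs a brief check: the stabilization step in Lemma~\ref{lemma:strong_monotonicity} only requires \eqref{eq:stab_scaling0} and \eqref{eq:stab_scaling2}, which hold on all of $V_h + \PP_{m,h}(\Omega)$, so it applies to $w \in V_h$ without boundary conditions.
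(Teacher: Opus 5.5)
Your proof is correct and follows the same route as the paper. It carries over the interior bound $\|\cL_\lambda w\|_0^2 - (1-\kappa)|w|_{*,\lambda}\|\cL_\lambda w\|_0$ from Lemma~\ref{lemma:strong_monotonicity}, which indeed never uses $w \in H_0^1(\Omega)$, combines it with coercivity of the boundary term, and absorbs the Miranda--Talenti deficit from \eqref{eq:nitsche_miranda_talenti_lambda} into the $\eta\kappa^{-1}$ penalty via Young's inequality. Your bookkeeping is slightly more streamlined than the paper's two-stage use of \eqref{eq:nitsche_miranda_talenti_lambda}: you rewrite the interior bound exactly as $\kappa|w|_{*,\lambda}^2 - \frac{1+\kappa}{2}(|w|_{*,\lambda}^2 - \|\cL_\lambda w\|_0^2)$ and absorb with a single $\kappa$-weighted Young inequality, but the ideas are identical.
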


\begin{proof}
    Let $w := v_1 - v_2$. Proceeding as in the proof of Lemma~\ref{lemma:strong_monotonicity}, one has
    \begin{equation*}
        \<\cA_h(v_1) - \cA_h(v_2), w\> \geq \|\cL_\lambda w\|_0^2 - (1 - \kappa) |w|_{*, \lambda} \|\cL_\lambda w\|_0.
    \end{equation*}
    Similarly, it is easily proved that $\<\cA_h^b(v_1) - \cA_h^b(v_2), w\> \gtrsim |w|_{\partial, \lambda, h}^2$.

    Let $C$ denote the hidden constant in \eqref{eq:nitsche_miranda_talenti_lambda}, so that $|w|_{*, \lambda}^2 - \|\cL_\lambda w\|_0^2 \leq C |w|_{\partial, \lambda, h} |w|_{*, \lambda}$. We can assume $\eta$ to be large enough so that $\eta \kappa^{-1} \<\cA_h^b(v_1) - \cA_h^b(v_2), w\> \geq C^2 \kappa^{-1} |w|_{\partial, \lambda, h}^2$. Then
    \begin{equation*}
        \<\cA_h^w(v_1) - \cA_h^w(v_2), w\> \geq \|\cL_\lambda w\|_0^2 - (1 - \kappa) |w|_{*, \lambda} \|\cL_\lambda w\|_0 + C^2 \kappa^{-1} |w|_{\partial, \lambda, h}^2,
    \end{equation*}
    and, using Young's inequality,
    \begin{align*}
        \<\cA_h^w(v_1) - \cA_h^w(v_2), w\>
        &\geq \kappa \|\cL_\lambda w\|_0^2 - \frac{1 - \kappa}{2} (|w|_{*, \lambda}^2 - \|\cL_\lambda w\|_0^2) + C^2 \kappa^{-1} |w|_{\partial, \lambda, h}^2 \\
        &\geq \kappa \|\cL_\lambda w\|_0^2 - \frac{C}{2} |w|_{\partial, \lambda, h} |w|_{*, \lambda} + C^2 \kappa^{-1} |w|_{\partial, \lambda, h}^2.
    \end{align*}
    Starting with the inequality $|w|_{*, \lambda}^2 - \|\cL_\lambda w\|_0^2 \leq C |w|_{\partial, \lambda, h} |w|_{*, \lambda}$ and applying Young's inequality, one has $|w|_{*, \lambda}^2 - \|\cL_\lambda w\|_0^2 \leq (C^2/2) |w|_{\partial, \lambda, h}^2 + (1/2) |w|_{*, \lambda}^2$, so $\|\cL_\lambda w\|_0^2 + (C^2/2) |w|_{\partial, \lambda, h}^2 \geq (1/2) |w|_{*, \lambda}^2$ and
    \begin{equation*}
        \<\cA_h^w(v_1) - \cA_h^w(v_2), w\>
        \geq \frac{1}{2} \kappa |w|_{*, \lambda}^2 - \frac{C}{2} |w|_{\partial, \lambda, h} |w|_{*, \lambda} + \frac{C^2}{2} \kappa^{-1} |w|_{\partial, \lambda, h}^2.
    \end{equation*}
    We conclude using Young's inequality one more time.
\end{proof}

From now on, we let $\eta_*^w$ be as in the above lemma. As in section~\ref{sec:analysis}, we deduce by applying the Browder--Minty theorem that the scheme \eqref{eq:nitsche_scheme} admits a unique solution $u_h^w \in V_h$ as soon as is $\eta \geq \eta_*^w$. Adapting the proof of Theorem~\ref{thm:error_estimate}, we obtain the following error estimate for $u_h^w$.

\begin{theorem}[error estimate, boundary penalty case]
    \label{thm:nitsche_error_estimate}
    We assume that $\eta \geq \eta_*^w$. Then, for any $u_I \in V_h$,
    \begin{align*}
        |u_I - u_h^w|_{*, \lambda} + \kappa^{-1} |u_I - u_h^w|_{\partial, \lambda}
        &\lesssim \eta \kappa^{-1} (|u - u_I|_{*, \lambda} + |u - u_I|_{\partial, \lambda, h} + |u - \Pi_h^m u|_{*, \lambda, h} \\
        &\qquad \qquad + \lambda E_{j, h}^0[u] + \lambda^{1/2} E_{j, h}^1[u]),
    \end{align*}
    where $E_{j, h}^0[u]$ and $E_{j, h}^1[u]$ are as in Theorem~\ref{thm:error_estimate}.
\end{theorem}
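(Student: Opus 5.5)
We mimic the proof of Theorem~\ref{thm:error_estimate}, using Lemma~\ref{lemma:nitsche_strong_monotonicity} in place of Lemma~\ref{lemma:strong_monotonicity}. Let $w := u_I - u_h^w \in V_h$. Since $u_h^w$ solves \eqref{eq:nitsche_scheme} and $w \in V_h$ is an admissible test function, \eqref{eq:nitsche_strong_monotonicity} gives
\begin{equation*}
    \kappa |w|_{*, \lambda}^2 + \kappa^{-1} |w|_{\partial, \lambda, h}^2 \lesssim \<\cA_h^w(u_I), w\>.
\end{equation*}
It then suffices to show that $\<\cA_h^w(u_I), w\> \lesssim \eta\, \mathcal{E}\, (|w|_{*, \lambda} + \kappa^{-1} |w|_{\partial, \lambda, h})$, where $\mathcal{E}$ denotes the bracket on the right-hand side of the statement. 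Dividing by $\kappa$ and applying Young's inequality then gives the claim. The extra $\kappa^{-1}$ on the boundary term is consistent with the weight $\eta \kappa^{-1}$ placed in front of $\cA_h^b$.

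Split $\<\cA_h^w(u_I), w\> = \<\cA_h(u_I), w\> + \eta\kappa^{-1}\<\cA_h^b(u_I), w\>$. For the interior part, use the same decomposition as before: $\<\cA_h(u_I) - \cA_h(\Pi_h^m u), w\> + \<\cA_h(\Pi_h^m u), w\>$. The Lipschitz bound \eqref{eq:lipschitz_continuity}, the stabilization scalings, the projected-operator consistency estimates and the use of $F_\gamma[u]=0$ all carry over verbatim. None of these steps used $w \in H_0^1$; they only used $\|\cL_{\lambda,h} w\|_0 \le \|\cL_\lambda w\|_0 \lesssim |w|_{*,\lambda}$, which holds for any $w \in H^2(\Omega)$ by the triangle inequality. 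This yields the interior bound $\eta(|u-u_I|_{*,\lambda} + |u-\Pi_h^m u|_{*,\lambda,h} + \lambda E^0_{j,h}[u] + \lambda^{1/2}E^1_{j,h}[u])\,|w|_{*,\lambda}$.

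For the boundary part, $u = g$ on $\partial\Omega$, so $\Pi_s^m u_I - g = \Pi_s^m(u_I - u) + (\Pi_s^m u - u)$. The consistency term
\begin{equation*}
    \sum_s (h_s^{-3}+\lambda h_s^{-1})\int_s \Pi_s^m (u_I-u)\,\Pi_s^m w
\end{equation*}
is bounded by $|u-u_I|_{\partial,\lambda,h}\,|w|_{\partial,\lambda,h}$ using $L^2(s)$-stability of $\Pi_s^m$. The remaining piece $\int_s(\Pi_s^m u - u)\Pi_s^m w$ vanishes by orthogonality of $\Pi^m_s$, since $\Pi_s^m w \in \PP_m(s)$. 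For the stabilization term, the key is that $s_{\partial,\lambda,h}$ only sees $u_I - \Pi_{\partial,h}^m u_I$, and
\begin{equation*}
    |u_I - \Pi_{\partial,h}^m u_I|_{\partial,\lambda,h} \le |(I-\Pi_{\partial,h}^m)(u_I - u)|_{\partial,\lambda,h} + |u - \Pi_{\partial,h}^m u|_{\partial,\lambda,h}.
\end{equation*}
The first term is $\le |u-u_I|_{\partial,\lambda,h}$.

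The main obstacle is the term $|u - \Pi_{\partial,h}^m u|_{\partial,\lambda,h}$, which does not appear explicitly in the statement and must be absorbed into $|u-\Pi_h^m u|_{*,\lambda,h}$. I would try the following. On each boundary side $s \subset \partial K$, the $L^2(s)$-best approximation gives $\|u - \Pi_s^m u\|_{0,s} \le \|u - \Pi_K^m u\|_{0,s}$. A scaled trace inequality then gives
\begin{equation*}
    h_s^{-3}\|u-\Pi_K^m u\|_{0,s}^2 \lesssim h_K^{-4}\|u-\Pi_K^m u\|_{0,K}^2 + h_K^{-2}|u-\Pi_K^m u|_{1,K}^2,
\end{equation*}
and similarly for the $\lambda h_s^{-1}$ weight. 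These negative powers of $h_K$ are not directly in $|u-\Pi_h^m u|_{*,\lambda,h}$. I would control them by Poincaré-type (Bramble--Hilbert) arguments on $K$: since $u - \Pi^m_K u$ is orthogonal to $\PP_m(K)$, and in particular to $\PP_1(K)$, one gets $h_K^{-4}\|u-\Pi_K^m u\|_{0,K}^2 + h_K^{-2}|u-\Pi^m_K u|_{1,K}^2 \lesssim |u - \Pi_K^m u|_{2,K}^2$. For the first-derivative part this needs a mean-zero condition on the gradient. If that route proves awkward, I would alternatively bound everything by $\|u - \Pi_K^m u\|$-quantities, which are of the same order as $|u-\Pi_h^m u|_{*,\lambda,h}$ in the final rates. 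Assembling all pieces and using $\eta \gtrsim 1$ completes the estimate.
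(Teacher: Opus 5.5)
Your proof is correct. Its overall structure is the paper's: strong monotonicity \eqref{eq:nitsche_strong_monotonicity} with $w=u_I-u_h^w$, the interior part handled exactly as in Theorem~\ref{thm:error_estimate}, and the boundary part reduced to $|u-\Pi_h^m u|_{\partial,\lambda,h}\lesssim|u-\Pi_h^m u|_{*,\lambda,h}$ via a scaled trace inequality and projection estimates.

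The only real difference is how you split the boundary term. The paper inserts $\Pi_h^m u$, writing $\langle\cA_h^b(u_I),w\rangle=\langle\cA_h^b(u_I)-\cA_h^b(\Pi_h^m u),w\rangle+\langle\cA_h^b(\Pi_h^m u),w\rangle$, which mirrors the interior argument. The stabilization part of the second term vanishes because the trace of $\Pi_h^m u$ is piecewise polynomial, and the projection error enters through both terms. You instead use $u=g$ on $\partial\Omega$ and the $L^2(s)$-orthogonality of $\Pi_s^m$. Then the penalty consistency term contributes only $|u-u_I|_{\partial,\lambda,h}$, and the projection error enters solely through the stabilization, via $|u-\Pi_{\partial,h}^m u|_{\partial,\lambda,h}\le|u-\Pi_h^m u|_{\partial,\lambda,h}$. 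Both routes are equally short. Yours shows a bit more clearly that the penalty consistency is exact up to interpolation error, while the paper's keeps the boundary and interior arguments uniform.

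Your hedge about the gradient is unnecessary: no mean-zero condition is needed. Set $e=u-\Pi_K^m u$, so that $\Pi_K^1 e=0$, and let $p\in\PP_1(K)$ be the averaged Taylor polynomial of $e$.
\begin{itemize}
\item Since $p=\Pi_K^1(p-e)$, you get $\|p\|_{0,K}\le\|e-p\|_{0,K}\lesssim h_K^2|e|_{2,K}$.
\item An inverse inequality on $\PP_1(K)$ then gives $|e|_{1,K}\le|e-p|_{1,K}+|p|_{1,K}\lesssim h_K|e|_{2,K}$, and likewise $\|e\|_{0,K}\lesssim h_K^2|e|_{2,K}$.
\item For the $\lambda h_s^{-1}$ weight, the Poincar\'e inequality from $\Pi_K^0 e=0$ suffices.
\end{itemize}
So your primary route goes through as written. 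The fallback of bounding by other $\|u-\Pi_K^m u\|$-type quantities should be dropped, since it would not give the estimate in the form stated.
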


\begin{proof}
    Let $w := u_I - u_h^w$. Using the strong monotonicity estimate \eqref{eq:nitsche_strong_monotonicity} and then the fact that $u_h^w$ is solution to the scheme \eqref{eq:nitsche_scheme},
    \begin{equation*}
        (|w|_{*, \lambda} + \kappa^{-1} |w|_{\partial, \lambda, h})^2
        \lesssim \kappa^{-1} \<\cA_h^w(u_I) - \cA_h^w(u_h^w), w\>
        = \kappa^{-1} \<\cA_h^w(u_I), w\>.
    \end{equation*}
    It remains to prove a suitable upper bound on $\<\cA_h^w(u_I), w\>$.

    Recall that $\<\cA_h^w(u_I), w\> = \<\cA_h(u_I), w\> + \eta \kappa^{-1} \<\cA_h^b(u_I), w\>$. For the term $\<\cA_h(u_I), w\>$, one may proceed as in Theorem~\ref{thm:error_estimate}. Let us prove, using similar arguments, that $\<\cA_h^b(u_I), w\> \lesssim (|u - u_I|_{\partial, \lambda, h} + |u - \Pi_h^m u|_{*, \lambda, h}) |w|_{\partial, \lambda, h}$.

    One has $\<\cA_h^b(u_I), w\> = \<\cA_h^b(u_I) - \cA_h^b(\Pi_h^m u), w\> + \<\cA_h^b(\Pi_h^m u), w\>$. Using the stabilization scaling inequality \eqref{eq:boundary_stab_scaling} and the triangle inequality,
    \begin{equation*}
        \<\cA_h^b(u_I) - \cA_h^b(\Pi_h^m u), w\> \lesssim (|u - u_I|_{\partial, \lambda, h} + |u - \Pi_h^m u|_{\partial, \lambda, h}) |w|_{\partial, \lambda, h}.
    \end{equation*}
    By standard trace inequalities and then standard polynomial projection estimates, $|u - \Pi_h^m u|_{\partial, \lambda, h} \lesssim (h^{-2} + \lambda^{1/2} h^{-1}) \|u - \Pi_h^m u\|_{0, h} + (h^{-1} + \lambda^{1/2}) |u - \Pi_h^m u|_{1, h} \lesssim |u - \Pi_h^m u|_{*, \lambda, h}$, and thus
    \begin{equation*}
        \<\cA_h^b(u_I) - \cA_h^b(\Pi_h^m u), w\> \lesssim (|u - u_I|_{\partial, \lambda, h} + |u - \Pi_h^m u|_{*, \lambda, h}) |w|_{\partial, \lambda, h}.
    \end{equation*}
    It remains to bound the term $\<\cA_h^b(\Pi_h^m u), w\>$.

    Using that $\Pi_h^m u \in \PP_{m, h}(\Omega)$, one can compute that
    \begin{equation*}
        \<\cA_h^b(\Pi_h^m u), w\>
        = \sum_{s \in \cS_{\partial, h}} (h_s^{-3} + \lambda h_s^{-1}) \int_s (\Pi_h^m u - g) \Pi_s^m w\, d x,
    \end{equation*}
    and thus $\<\cA_h^b(\Pi_h^m u), w\> \lesssim |\Pi_h^m u - g|_{\partial, \lambda, h} |w|_{\partial, \lambda, h}$. Using that $u = g$ a.e.\ on $\partial \Omega$, $\<\cA_h^b(\Pi_h^m u), w\> \lesssim |u - \Pi_h^m u|_{\partial, \lambda, h} |w|_{\partial, \lambda, h}$. As above, $|u - \Pi_h^m u|_{\partial, \lambda, h} \lesssim |u - \Pi_h^m u|_{*, \lambda, h}$, which concludes the proof.
\end{proof}

\section{Realization of the VEM framework}\label{sec:realvem}

In this section, we briefly outline an $H^2$-conforming virtual element realization of the framework of section~\ref{sec:vem_framework} and the associated quasi-interpolation operators.
$H^2$-conforming virtual elements are well established; see, e.g., \cite{brezzi2013,beirao2014,antonietti2021,antonietti2020,beirao2020,chen2022conforming,brenner2019,chen2022hessian}.
For simplicity, we restrict the description to the $d=2$ case. The higher-dimensional spaces can be constructed hierarchically in the space dimension with no additional difficulties~\cite{bonnet2025,chen2022conforming}. The construction relies on the VEM enhancement technique with general computable projection operators~\cite{ahmad2013,cangiani2017}.

\subsection{Virtual element spaces} \label{sec:projections}

First, on each  $K\in \cK_h$ a local enlarged virtual element space is introduced as
\begin{align*}
\widetilde{V}_K:=\left\{
v\in H^2(K)\, : \, \Delta^2 v\in \PP_m(K)
 \text{ and } v_{|s}\in \PP_{3\lor m}(s),\,
\partial_{\mathbf{n}}v_{|s}\in \PP_{m-1}(s),\,
\forall s\subset\partial K
\right\}.
\end{align*}
The following set $\dofs_K$ of local degrees of freedom is  incomplete for $\widetilde{V}_K$ but will characterize the  forthcoming (reduced) virtual element space:
\begin{align}
    \notag
    &h_{{\rm v}}^j\nabla^j v({\rm v}),\, j=0,1,\, \text{for all vertices } \rm v \text{ of } K; \\
    \notag
    &|s|^{j-1} \int_s \frac{\partial^j v}{\partial \mathbf{n}^j} q \, d\ell,\, \text{for all } q\in\MM_{m+j -4}(s),\, j=0,1,\, \text{for all edges } s \text{ of } K; \\
    \label{eq:cell_dofs}
    &|K|^{-1}\int_K v q\, dx,\, \text{for all } q\in\MM_{m-4}(K).
\end{align}
Here, for given $\omega\subset \RR^d$, 
the set $\MM_{k}(\omega)$ denotes a basis of $\PP_{k}(\omega)$ if $k\ge 0$ while
$\MM_{k}(\omega)=\emptyset$ if $k<0$.

The local virtual element space is obtained from $\widetilde{V}_K$ with the aid of a projection operator fully computable from the above degrees of freedom. Here we make a specific choice, referring to~\cite{ahmad2013,chen2022conforming,cangiani2017,dedner2022fourthorder} for the general idea.  Let $\Pi_{K,*}^m:\widetilde{V}_K\rightarrow \PP_m(K)$ an  operator that (a) reduces to the identity over the polynomial subspace $\PP_m(K)$, and (b) satisfies the constraints  $\int_K\Pi_{K,*}^mvq=\int_Kvq$ for $q\in\PP_{m-4}(K)$. Crucially,  both properties only depend on the degrees of freedom belonging to $\dofs_K$, and hence this operator is well-defined from these. Indeed,  the space of polynomials $\PP_m(K)$ is uniquely determined by the degrees of freedom and the constraints in (b) coincide with the elemental moments \eqref{eq:cell_dofs}.

A local virtual element space with the above as degrees of freedom is now defined as
\begin{align*}
V_K:=\left\{
v\in \widetilde{V}_K\, : \int_K (v- \Pi_{K,*}^mv)q\, dx=0, \forall q\in \PP_{m}(K)\setminus\PP_{m-4}(K)
\right\}.
\end{align*}
Note that, by definition of $V_K$ and the specific choice made for the  projection operator $\Pi_{K,*}^m$ it follows that the restriction of  $\Pi_{K,*}^m$ on ${V_K}$ coincides with the $L^2$-projection onto $\PP_{m}(K)$; henceforth we shall write $\Pi_K^m$ in place of $\Pi_{K,*}^m$.

Exploiting the computability  of $L^2$-projections of order up to $m$, we can also hierarchically compute $L^2$-projections of the gradient and hessian up to prescribed orders. 
Indeed, from $\Pi_K^{m_0}$, $m_0\le m$, for any given $m_1\le m_0+1$ we can compute the gradient projection into $(\PP_{m_1}(K))^d$ by solving
\begin{align*}
  \int_K \Pi_K^{m_1}\nabla v \cdot  q = -\int_K\Pi_K^{m_0} v {\rm div}q + \int_{\partial K} vq\cdot n,\qquad\text{for all}\; q\in(\PP_{m_1}(K))^d,
\end{align*}
and,  for any given $m_2\le m_1+1$, we can compute the hessian projection into $(\PP_{m_2}(K))^{d\times d}$ by solving
\begin{align*}
  \int_K \Pi_K^{m_2}\nabla^2 v\colon q
     = -\int_K\Pi_K^{m_1}\nabla v\cdot {\rm div}q
       + \int_{\partial K} \nabla v\cdot qn
       ,\qquad\text{for all}\; q\in(\PP_{m_2}(K))^{d\times d},
\end{align*}
 for any $v\in V_K$. This approach allows for the implementation of all the variants of the VEM discussed in the previous sections.
Computability of the right-hand sides in the above follows from the observation that $v_{|\partial K}$ is a piecewise polynomial uniquely defined by the degrees of freedom and so consequently tangential derivatives are also computable. Furthermore, the normal derivative at a given edge of $K$ is also a polynomial uniquely defined by the given degrees of freedom.
Note that for $d>2$ the boundary integrals would need to involve additional face projections.

The global virtual element space $V_h \subset H^2(\Omega)$ may now be defined as
\begin{equation*}
    V_h := \{v \in H^2(\Omega) \mid v_{|K} \in V_K,\, \forall K \in \cK_h\}.
\end{equation*}
Recall also that $ V_h^0 := \{v \in V_h \mid v_{|\partial \Omega} = 0\}$ and $V_h^g := g_I + V_h^0$, where $g_I$ is an admissible approximation of $g$ to be fixed later. 

The proof that the space $V_h$ satisfies the required inverse inequalities and stabilization form scaling given, respectively, as Assumption~\ref{assum:inverse_ineq} and Assumption~\ref{assum:stab_scaling}, can be found in \cite{chen2022conforming}.

Finally,  the VEM is realized by fixing symmetric bilinear forms $s_{0,h}$ and $s_{2,h}$. Here we choose the so called \emph{D-recipe}~\cite{beirao2017,mascotto2018}, a modification of the standard \emph{dofi-dofi} recipe~\cite{bonnet2025} more appropriate for moderately high orders,   obtained by weighting each stabilization form with the diagonal of the corresponding consistency form.

\subsection{Order of convergence}

We derive from Theorem~\ref{thm:error_estimate} optimal orders of convergence under minimal regularity assumptions for the VEM realization described above. We rely on the  optimal Scott-Zhang type quasi interpolant presented in Section 5 of~\cite{bonnet2025}.    It is indeed shown in~\cite{bonnet2025} that there exists an operator $(\cdot)_I:H^{2}(\Omega)\rightarrow V_h$ with optimal approximation properties as follows. Let $v\in H^2(\Omega)$. For any $K\in \cK_h$ and $1 \leq s \leq m$, if $v \in H^{s+1}(\omega_K)$, then
    \begin{equation} \label{eq:interp}
        \|v - v_I\|_{2,K} \lesssim h_K^{s-1} |v|_{s+1, \omega_K}.
    \end{equation}
Using this together with standard projection error estimates  we derive from Theorem~\ref{thm:error_estimate} the following.

\begin{theorem}[rate of convergence]
    \label{thm:convergence_rate}
    Assume that $\eta\ge\eta^*$. Let $u \in V$ denote the unique solution to \eqref{eq:var} and  $u_h \in V_h$ denote  the unique solution to any of the VEM in \eqref{eq:scheme} with $g_I$ as above.  If $u \in H^{s+1}(\Omega)$ for some $1 \leq s \leq m$, then
    \begin{equation*}
        \|u - u_h\|_2 \lesssim \eta \kappa^{-1} (1 + \lambda) h^{s-1} \|u\|_{s+1}.
    \end{equation*}
\end{theorem}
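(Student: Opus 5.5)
We combine Theorem~\ref{thm:error_estimate}, applied with $u_I$ the quasi-interpolant from \eqref{eq:interp}, with a triangle inequality. We write $u - u_h = (u - u_I) + (u_I - u_h)$. The first term is bounded directly by summing \eqref{eq:interp} over the cells; the finite overlap of the patches $\omega_K$ (from shape regularity) gives $\|u - u_I\|_2 \lesssim h^{s-1}\|u\|_{s+1}$.

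The second term requires two preliminary points.

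\emph{Admissibility of $u_I$.} We need $u_I \in V_h^g$, i.e.\ $u_I - g_I \in V_h^0$. We fix $g_I := g_I$ as the Scott--Zhang type quasi-interpolant of $g$ (or of any $H^2$ extension). The boundary degrees of freedom of the operator in \cite{bonnet2025} depend only on boundary traces (the Scott--Zhang construction chooses boundary-side averaging). Since $u = g$ on $\partial\Omega$, this gives $(u)_I = g_I$ on $\partial\Omega$. We will state this as the natural choice of $g_I$ in the theorem.

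\emph{Norm comparison.} We must pass from $|\cdot|_{*,\lambda}$ to the full $H^2$ norm. Since $u_I - u_h \in V_h^0 \subset H_0^1(\Omega)$, the Poincaré inequality followed by the Miranda--Talenti-type bound $|v|_1 \lesssim |v|_2$ on $V^0$ (indeed $\|v\|_2 \lesssim |v|_2$ on $H^2\cap H_0^1$ for convex $\Omega$) gives $\|u_I - u_h\|_2 \lesssim |u_I - u_h|_2 \le |u_I - u_h|_{*,\lambda}$. This holds uniformly in $\lambda \ge 0$.

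With these in hand, we bound each term on the right-hand side of Theorem~\ref{thm:error_estimate}.
\begin{itemize}
\item The term $|u - u_I|_{*,\lambda}$ satisfies $|u-u_I|_{*,\lambda} \le (1+\lambda)\|u-u_I\|_2$, since $|v|_{*,\lambda}^2 \le (1+\lambda)^2\|v\|_2^2$. By \eqref{eq:interp} it is therefore $\lesssim (1+\lambda)h^{s-1}\|u\|_{s+1}$.
\item For $|u - \Pi_h^m u|_{*,\lambda,h}$, the standard Bramble--Hilbert estimates on star-shaped cells give $|u-\Pi_K^m u|_{k,K} \lesssim h_K^{s+1-k}|u|_{s+1,K}$ for $k=0,1,2$, since $s\le m$. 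Using $h\lesssim 1$, the term is $\lesssim (1+\lambda)h^{s-1}\|u\|_{s+1}$.
\item For $E_{j,h}^0[u]$, we have $\|u-\Pi_h^{m-j}u\|_0 \lesssim h^{\min(s+1,m-j+1)}|u|$, and $\min(s+1,m-1) \ge s-1$.
\item For $E_{2,h}^1[u]$, we have $\|\nabla u - \Pi_h^{m-2}\nabla u\|_0 \lesssim h^{\min(s,m-1)}\|u\|_{s+1}$, and $\min(s,m-1)\ge s-1$.
\end{itemize}
The last two terms are weighted by $\lambda$ and $\lambda^{1/2} \le 1+\lambda$ respectively, so they are also $\lesssim (1+\lambda)h^{s-1}\|u\|_{s+1}$.

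Collecting these bounds and multiplying by the factor $\eta\kappa^{-1}$ yields the claim. The interpolation term carries no factor $\eta\kappa^{-1}$, but it is absorbed since $\eta\kappa^{-1}\gtrsim 1$ (because $\eta \ge \eta_* \eqsim 1$ and $\kappa\le 1$).

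The main obstacle is the boundary compatibility $u_I - g_I \in V_h^0$. This depends on the precise definition of the quasi-interpolant of \cite{bonnet2025} and on how $g_I$ is chosen. If the interpolant's boundary values did not depend only on the trace, we would instead define $g_I$ as $(u)_I$ restricted via boundary dofs computed from $g$, and verify that the error estimate \eqref{eq:interp} still holds for this modified interpolant.
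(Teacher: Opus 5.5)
Your proposal is correct and uses the same ingredients as the paper: Theorem~\ref{thm:error_estimate} with the quasi-interpolant of \eqref{eq:interp}, a reduction from the full $H^2$ norm to $|\cdot|_2$ on $H^2(\Omega)\cap H_0^1(\Omega)$, and standard projection estimates. The one structural difference is where that norm reduction is applied.

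\begin{itemize}
\item \emph{The paper's route.} It writes $\|u-u_h\|_2\lesssim\|\Delta(u-u_h)\|_0\le|u-u_h|_{*,\lambda}$ directly, invoking $(u-u_h)_{|\partial\Omega}=0$, and then uses the triangle inequality implicitly. Since $u_h=g_I$ on $\partial\Omega$, this tacitly requires $g_I=g$ on $\partial\Omega$. That holds for $g=0$, but not for a general interpolated boundary datum.
\item \emph{Your route.} You split off $u-u_I$ first and apply the norm reduction only to $u_I-u_h$. This lies in $V_h^0$ under exactly the condition $u_I\in V_h^g$ that Theorem~\ref{thm:error_estimate} already needs. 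Your ordering is therefore slightly more robust with respect to the boundary data. The leftover term $\|u-u_I\|_2$ is correctly absorbed via $\eta\kappa^{-1}\gtrsim 1$.
\end{itemize}

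Your explicit bookkeeping fills in what the paper leaves as ``one easily concludes'': the bound $|v|_{*,\lambda}\le(1+\lambda)\|v\|_2$, and the exponents $\min(s+1,m-j+1)\ge s-1$ and $\min(s,m-1)\ge s-1$ in the projection terms.

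On the admissibility issue you flag, the claim that the boundary degrees of freedom of the quasi-interpolant depend only on the trace is delicate at corners of $\Omega$. There the vertex gradient degree of freedom enters the traces on both adjacent sides, and so involves the normal derivative of $u$, which is not determined by $g$. A cleaner formulation is to take $g_I:=(g)_I$ (recall $g\in H^2(\Omega)$) and use linearity: $u_I-g_I=(u-g)_I$ with $u-g\in V^0$. Then all you need is that the quasi-interpolant maps $V^0$ into $V_h^0$, the usual boundary-preservation property of Scott--Zhang-type operators.

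A minor point: the bound $\|v\|_2\lesssim|v|_2$ on $H^2(\Omega)\cap H_0^1(\Omega)$ needs no convexity. The identity $|v|_1^2=-\int_\Omega v\,\Delta v$ together with Poincar\'e's inequality suffices.
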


\begin{proof}
    Observe that
    \begin{equation*}
        \|u - u_h\|_2
        \lesssim \|\Delta (u - u_h)\|_0
        \lesssim |u - u_h|_2
        \leq |u - u_h|_{*, \lambda},
    \end{equation*}
    where the first inequality is the a priori estimate for the Poisson equation, using that $(u - u_h)_{|\partial \Omega} = 0$. Then, using Theorem~\ref{thm:error_estimate},
    \begin{equation*}
        \|u - u_h\|_2 \lesssim \eta \kappa^{-1} (1 + \lambda) (\|u - u_I\|_2 + \|u - \Pi_h^m u\|_2 + \|u - \Pi_h^{m-2} u\|_0 + \|\nabla u - \Pi_h^{m-2} \nabla u\|_0),
    \end{equation*}
    with $u_I \in V_h$ obtained using the quasi interpolant $(\cdot)_I$ discussed above. One easily concludes using \eqref{eq:interp} and standard polynomial projection estimates.
\end{proof}

\begin{remark}
    For conciseness, the dependence on $\lambda$ is not tracked as precisely in Theorem~\ref{thm:convergence_rate} as in Theorem~\ref{thm:error_estimate}.
\end{remark}

\begin{remark}
    Similar reasoning can be used to derive rates of convergence from Theorem~\ref{thm:error_estimate_alt1} and Theorem~\ref{thm:nitsche_error_estimate}. In the case of Theorem~\ref{thm:error_estimate_alt1}, the factor $\kappa^{-1}$ becomes $\kappa^{-2}$.
\end{remark}

\section{Numerical experiments}
\label{sec:numerics}

In this section we investigate numerically the approximation properties of the
presented VEM approximations (and a DG method) on a series of (criss) simplex grids and Voronoi grids.
The number of elements
on each level of the grid series is reported in Table~\ref{tab:gridsize}
together with the number of degrees of freedom for the VEM method (note
that all VEM methods of the same order have the same number of
degrees of freedom).

The numerical tests are based on the Dune-Vem\footnote{\url{pypi.org/project/dune-vem/}}
software package which is part of the Dune framework
\cite{bastian2021,dedner2010,dedner2020}.
All our VEM approximations can be implemented in a unified fashion once a suitable operator $\Pi_{K,*}^m$ has been chosen. In our implementation we follow the \emph{constrained least squares} approach first described in \cite{dedner2022fourthorder,dedner2024}. For the $H^2$-conforming VEM spaces with $m\geq 3$, $\Pi_{K,*}^m$ is the unique solution to the constrained least squares (CLS) problem
\begin{align*} 
  & \sum_{\delta\in\dofs_K} \big(\delta(\Pi_{K,*}^mv - v)\big)^2 \to \text{min} \\
  \text{satisfying}\; & \int_K (\Pi_{K,*}^mv - v)q = 0\; \quad \text{for all}\; q\in \PP_{m-4}(K) 
\end{align*}
where $\dofs_K$ is the set of local degrees of freedom given in Section~\ref{sec:projections}.
For $m=2$, we define $\Pi_{K,*}^2$ into $\PP_3(K)$ to match the order of $v_{|\partial K}$ and, to render the above CLS problem uniquely solvable also on triangles where we only have $|\dofs_K|=9<{\rm dim }\PP_3(K)$, we impose the additional constraint
\begin{align*}
  \int_K\Delta\Pi_{K,*}^2 v - \int_{\partial K}\nabla v\cdot n = 0.
\end{align*}
Details on this construction and on the solution of the resulting CLS can be found in \cite{dedner2024}.

We solve the nonlinear problem \eqref{eq:scheme} by means of Howard's algorithm, an iterative method where, given an approximation $u_h^n$, a new approximation is given by $u_h^{n+1}=u_h^n-\delta_h^n$ where $\delta_h^n\in V_h$ solves the linear problem
\begin{align} \label{eq:linHowards}
    \int_\Omega F^n_{\gamma, h}[\delta_h^{n+1}] \cL_{\lambda, h} w\, d x
         + \eta\<\cA_h^s(\delta_h^{n+1}), w\> \, = \,
     \<R_h^n,w\>
\end{align}
where the right-hand side is the residual
\begin{align*}
  \<R_h^n,w\>:=\int_\Omega F^n_{\gamma, h}[u_h^{n}] \cL_{\lambda, h} w\, d x + \eta\<\cA_h^s(u_h^{n}), w\>.
\end{align*}
At step $n$, the linear operator $F^n_{\gamma,h}$ is given by
\begin{align*}
  F^n_{\gamma, h}[v] := \gamma^{\alpha^n} (\cL_h^{\alpha^n} v - f^{\alpha^n}),
\end{align*}
where, for $x\in\Omega$, we define a function $\alpha^n\colon\Omega\to\Lambda_1$ such that
\begin{align} \label{eq:minHowards}
  \gamma^{\alpha^n(x)} \Big(\cL_h^{\alpha^n(x)}u_h^n(x) - f^{\alpha^n(x)}(x)\Big)
  &= \inf_{\alpha \in \Lambda_1} \Big[\gamma^{\alpha} \Big((\cL_h^{\alpha}u_h^n)(x) - f^{\alpha}(x)\Big)\Big].
\end{align}

{\bf Note:} in all our tests, $\Lambda_2=\emptyset$. Consequently, we only need to focus on minimizing $\alpha\in\Lambda_1$ in Howard's algorithm.

In practice, during assembly of the matrix and right side to compute $\delta_h^n$, the integral in \eqref{eq:linHowards} is replaced by a quadrature rule with points $(x_q)_q$. In each of these points the minimization problem \eqref{eq:minHowards} is solved either directly in the case that $\Lambda_1$ is finite or by means of an iterative method like golden section search or using a suitable solver from the \emph{NLopt nonlinear-optimization package}\footnote{\url{http://github.com/stevengj/nlopt}} by Steven G.~Johnson.
We initialize Howard's algorithm with $u_h^0\equiv 0$ and terminate the iterations once the $H^2$ dual norm of the residual $R_h^n$ drops below $10^{-9}$. The dual norm of the residual is computed as $\<R_h^n,z_h\>$, where $z_h\in V_h$ is the solution to
\begin{align*}
  (z_h,v_h)_2 = \<R_h^n,v_h\>.
\end{align*}

We first focus on the virtual element method of Section~\ref{sec:scheme} in the case $j=0$, denoted below as VEM(m,m,m).
We investigate this method in some detail for three problems on both the simplex and voronoi grid sequences, and use it as reference method when investigating the other approaches
discussed in this paper. Since the results are very similar across the test cases and grid sequences, we report only representative results, focusing primarily on the second problem on simplex grids; other combinations are omitted when they lead to analogous conclusions.
We conclude, in the final section, with a more detailed investigation of the third problem, which features a more dominant first order term.
All the computations with the virtual element method were carried out
with a stabilization constant of $\eta=1$.

Our error estimates focus on the error in $|\cdot|_{*,\lambda}$ which
(depending on the value of $\lambda$) is a combination of the broken
$H^2$, $H^1$, and $L^2$ (semi)norms. However, we report below results in these broken (semi)norms individually, noting that the $|\cdot|_{*,\lambda}$ (semi)norm
is always close to the $H^2$ seminorm. 
Since in the VEM we do not have direct access to the values of the numerical solution $u_h$, we compute errors involving local polynomial projections. Accordingly, by $H^2$, $H^1$, and $L^2$ errors, we mean respectively
\begin{align*}
   &\|\nabla^2 u - \Pi_h^{m}\nabla^2 u_h\|_0, &
   &\|\nabla u - \Pi_h^{m}\nabla u_h\|_0, &
   &\|u - \Pi_h^{m}u_h\|_0.
\end{align*}

Note that, in later experiment, we always use the maximum projection order $m$ in the above projections,
independently of the projection orders (e.g.\ $m-j$) used in the computation of $u_h$. This
ensures that the errors are computed in the same way for all methods.

\begin{table}
\centering
\begin{tabular}{rrrrrr}
\hline
\multicolumn{6}{c}{simplex grids} \\
\hline
level & elements & dofs $m=2$ & dofs $m=3$ & dofs $m=4$ & dofs $m=5$ \\
1 & 128    & 243    & 451   & 275   & 451   \\
2 & 512    & 867    & 1667  & 995   & 1667  \\
3 & 2048   & 3267   & 6403  & 3779  & 6403  \\
4 & 8192   & 12675  & 25091 & 14723 & 25091 \\
5 & 32768  & 49923  & 99331 & 58115 &       \\
\hline
\multicolumn{6}{c}{Voronoi grids} \\
\hline
level & elements & dofs $m=2$ & dofs $m=3$ & dofs $m=4$ & dofs $m=5$ \\
1 & 47    & 288    & 430    & 761     & 1139   \\
2 & 196   & 1182   & 1771   & 3145    & 4715   \\
3 & 784   & 4710   & 7063   & 12553   & 18827  \\
4 & 3135  & 18816  & 28222  & 50169   & 75251  \\
5 & 12544 & 75270  & 112903 & 200713  &        \\
\hline
\end{tabular}

\caption{Number of elements and degrees of freedom (dofs) for the tested
         VEM. The simulations were stopped after a certain number of
         refinement steps or when the number of dofs exceeded $10^5$.}
\label{tab:gridsize}
\end{table}


\subsection{Reference virtual element method}
As announced, we start by studying the method VEM(m,m,m).

\emph{Problem 1: First experiment from \cite{smears2014}.}
For this test case we take $\Lambda_1 = [0, \pi / 3] \times \SO(2)$. For
$\theta\in [0, \pi / 3]$ and a fixed rotation $R=R(\omega)\in \SO(2)$
of angle $\omega\in [0,2\pi)$, we define
\begin{equation*}
    A^{(\theta, R)}(x) = \frac{1}{2} R^\top A_0(\theta) R, \quad \text{where } A_0(\theta) := \begin{pmatrix}1 + \sin^2(\theta) & \sin \theta \cos \theta \\ \sin \theta \cos \theta & \cos^2 \theta\end{pmatrix},
\end{equation*}
$b^{(\theta, R)}(x) = 0$, $c^{(\theta, R)}(x) = \pi^2$, and $f^{(\theta, R)}(x) = \sqrt{3} \sin^2 \theta / \pi^2 + f_0(x)$ with $f_0$ chosen so that the exact solution is given by $u(x_1, x_2) = \exp(x_1 x_2) \sin(\pi x_1) \sin(\pi x_2)$ for $x = (x_1, x_2) \in \Omega = (0, 1)^2$.

In the design of the scheme, we choose $\gamma^{\alpha, \beta} = \gamma^\alpha = \gamma^{(\theta, R)}$ according to \eqref{eq:gamma}, and, following \cite[Example~1]{smears2014}, $\lambda = (8/7) \pi^2$ and $\kappa = 1 - \sqrt{1 - 1/7} \approx 0.074$.

As can be seen in Figure~\ref{fig:p3lllother} (left) and from the
experimental convergence rates summarized in the left most columns of the
tables in Table~\ref{tab:p3eoc}, our reference VEM method
converges with the order of $m-1$ in the $H^2$ error,
which is expected from our analysis.
This behavior is observed on both the simplex grid sequence (top) and the
Voronoi grids (bottom) for polynomial orders $m=2$, $3$, $4$, and $5$.
We also report, in the same figure and table, the $H^1$ and $L^2$ errors and
convergence rates,
for which we do not have a convergence analysis. These results
show convergence of order $m$ for the $H^1$ error on both grid
sequences. In the $L^2$ norm, we observe convergence close to $m+1$ for $m=3$, $4$, and $5$,
while for $m=2$ the convergence rate in $L^2$ is the same as in $H^1$.

\begin{figure}[ht!]
\centering
\resizebox{0.32\textwidth}{!}{\input{results/A3/eps/vem0A3simplexErrH2}}
  \resizebox{0.32\textwidth}{!}{\input{results/A3/eps/vem0A3simplexErrH1}}
  \resizebox{0.32\textwidth}{!}{\input{results/A3/eps/vem0A3simplexErrL2}}

\resizebox{0.32\textwidth}{!}{\input{results/A3/eps/vem0A3voronoiErrH2}}
  \resizebox{0.32\textwidth}{!}{\input{results/A3/eps/vem0A3voronoiErrH1}}
  \resizebox{0.32\textwidth}{!}{\input{results/A3/eps/vem0A3voronoiErrL2}}
\caption{Problem 1: $H^2$, $H^1$, $L^2$
         errors (from left to right) on the simplex grids (top) and
         Voronoi grids (bottom)
         for the reference method VEM(m,m,m).}
\label{fig:p3lllother}
\end{figure}
\begin{table}
\centering
\makebox{\centering \begin{tabular}{rrr}
\hline
   $H^2$ &   $H^1$ &   $L^2$ \\
\hline
    1.23 &    1.71 &    1.79 \\
    1.24 &    1.95 &    1.98 \\
    1.16 &    1.84 &    1.87 \\
    1.09 &    2.19 &    2.15 \\
\hline
\end{tabular} }\hfill%
\makebox{\centering \begin{tabular}{rrr}
\hline
   $H^2$ &   $H^1$ &   $L^2$ \\
\hline
    1.56 &    2.83 &    2.6  \\
    1.77 &    2.97 &    3.54 \\
    1.92 &    3.03 &    3.89 \\
\hline
\end{tabular} }\hfill%
\makebox{\centering \begin{tabular}{rrr}
\hline
   $H^2$ &   $H^1$ &   $L^2$ \\
\hline
    2.81 &    3.82 &    4.89 \\
    3.03 &    4.08 &    5.18 \\
    3.14 &    4.21 &    4.7  \\
\hline
\end{tabular} }\hfill%
\makebox{\centering \begin{tabular}{rrr}
\hline
   $H^2$ &   $H^1$ &   $L^2$ \\
\hline
    3.96 &    5.05 &    6.23 \\
    4.01 &    5.04 &    5.6  \\
\hline
\end{tabular} }\\[1em]
\makebox{\centering \begin{tabular}{rrr}
\hline
   $H^2$ &   $H^1$ &   $L^2$ \\
\hline
    1.29 &    2.13 &    2.22 \\
    1.12 &    1.95 &    1.98 \\
    1.13 &    2.09 &    2.09 \\
    1.14 &    2.16 &    2.17 \\
\hline
\end{tabular} }\hfill%
\makebox{\centering \begin{tabular}{rrr}
\hline
   $H^2$ &   $H^1$ &   $L^2$ \\
\hline
    2.32 &    3.54 &    4.99 \\
    2.1  &    3.1  &    3.63 \\
    2.06 &    3.14 &    4.24 \\
\hline
\end{tabular}
 }\hfill%
\makebox{\centering \begin{tabular}{rrr}
\hline
   $H^2$ &   $H^1$ &   $L^2$ \\
\hline
    3.43 &    4.46 &    5.71 \\
    3.23 &    4.27 &    5.3  \\
    3.06 &    4.07 &    5.12 \\
\hline
\end{tabular} }\hfill%
\makebox{\centering \begin{tabular}{rrr}
\hline
   $H^2$ &   $H^1$ &   $L^2$ \\
\hline
    4.05 &    5.08 &    6.15 \\
    4.3  &    5.34 &    6.46 \\
\hline
\end{tabular} }\iftoggle{author}{}{\\[1em]}
\caption{Problem 1: experimental orders of convergence on the simplex grids
(top row) and the Voronoi grids (bottom row) for $m=2$, $3$, $4$, $5$ (left to right). }
\label{tab:p3eoc}
\end{table}

\emph{Problem 2: checkerboard problem (\cite[Test~4]{wu2019thesis}).}
For this test case we take $\Lambda_1 = \{0,1\}$,
\begin{equation*}
    A^{\alpha} = \begin{pmatrix}2+\alpha & 1 \\ 1 & 1+\alpha\end{pmatrix},
\end{equation*}
$b^{\alpha}=0, c^{\alpha}=0$, and $f^{\alpha}(x) = f(x)$ chosen so that
the exact solution is given by
$u(x_1,x_2) = \sin(x_1)\sin(x_2)$ with
$x\in\Omega=(-\pi,\pi)^2$. This leads to
\begin{equation*}
    f(x) = \begin{cases}
          H(x) & x_1x_2>0, \\
          H(x) - 2\sin(x_1)\sin(x_2) & x_1x_2<0,
          \end{cases}
\end{equation*}
with $H(x_1,x_2) = -3\sin(x_1)\sin(x_2) + 2\cos(x_1)\cos(x_2)$
while the optimal control parameter is
\begin{equation*}
    \alpha(x) = \begin{cases}
          0 & x_1x_2>0, \\
          1 & x_1x_2<0.
          \end{cases}
\end{equation*}

We choose $\gamma^{\alpha, \beta} = \gamma^\alpha$ as in \eqref{eq:gamma}, and, following \cite{wu2019thesis}, $\lambda = 0$ and $\kappa = 1 - \sqrt{1 - 2/7} \approx 0.155$.

Results for Problem 2 are summarized in Figure~\ref{fig:p2lllother} and
Table~\ref{tab:p2eoc}. The errors show the expected order $m-1$ for
the $H^2$ error for both the simplex and Voronoi grid sequences.
As before, higher order convergence can be observed for the $H^1$ and $L^2$ errors.
The numerical convergence rates confirm our observations from Problem 1.
However, in this case we see some stability issues with the higher order virtual element methods
on the finest grids when the $L^2$ error starts dropping below $10^{-8}$,
which is close to the stopping criterion used in the Howard's algorithm.

\begin{figure}[ht!]
\centering
\resizebox{0.32\textwidth}{!}{\input{results/A2/eps/vem0A2simplexErrH2}}
  \resizebox{0.32\textwidth}{!}{\input{results/A2/eps/vem0A2simplexErrH1}}
  \resizebox{0.32\textwidth}{!}{\input{results/A2/eps/vem0A2simplexErrL2}}

\resizebox{0.32\textwidth}{!}{\input{results/A2/eps/vem0A2voronoiErrH2}}
  \resizebox{0.32\textwidth}{!}{\input{results/A2/eps/vem0A2voronoiErrH1}}
  \resizebox{0.32\textwidth}{!}{\input{results/A2/eps/vem0A2voronoiErrL2}}

\caption{Problem 2: $H^2$, $H^1$, and $L^2$
         errors (left to right) on the simplex grids (top) and
         Voronoi grids (bottom)
         for the reference method VEM(m,m,m).}
\label{fig:p2lllother}
\end{figure}
\begin{table}
\centering
\makebox{\centering \begin{tabular}{rrr}
\hline
   $H^2$ &   $H^1$ &   $L^2$ \\
\hline
    1.21 &    1.53 &    1.56 \\
    1.22 &    1.83 &    1.87 \\
    1.12 &    1.95 &    1.96 \\
    1.03 &    1.92 &    1.91 \\
\hline
\end{tabular} }\hfill%
\makebox{\centering \begin{tabular}{rrr}
\hline
   $H^2$ &   $H^1$ &   $L^2$ \\
\hline
    1.2  &    2.5  &    2.51 \\
    1.41 &    2.84 &    2.87 \\
    1.7  &    3.13 &    3.24 \\
    1.88 &    3.18 &    3.43 \\
\hline
\end{tabular} }\hfill%
\makebox{\centering \begin{tabular}{rrr}
\hline
   $H^2$ &   $H^1$ &   $L^2$ \\
\hline
    2.52 &    3.5  &    4.44 \\
    2.98 &    4.01 &    4.68 \\
    3.25 &    4.35 &    4.6  \\
    3.25 &    4.38 &    4.3  \\
\hline
\end{tabular} }\hfill%
\makebox{\centering \begin{tabular}{rrr}
\hline
   $H^2$ &   $H^1$ &   $L^2$ \\
\hline
    3.88 &    4.96 &    6.09 \\
    4.03 &    5.06 &    5.94 \\
    4.02 &    5.02 &    4.39 \\
\hline
\end{tabular} }\\[1em]
\makebox{\centering \begin{tabular}{rrr}
\hline
   $H^2$ &   $H^1$ &   $L^2$ \\
\hline
    1.43 &    1.66 &    1.67 \\
    1.46 &    1.97 &    1.97 \\
    1.31 &    1.98 &    1.96 \\
    1.12 &    1.97 &    1.97 \\
\hline
\end{tabular} }\hfill%
\makebox{\centering \begin{tabular}{rrr}
\hline
   $H^2$ &   $H^1$ &   $L^2$ \\
\hline
    2.16 &    3.33 &    4.11 \\
    2.17 &    3.31 &    4.54 \\
    2.19 &    3.37 &    3.97 \\
    1.92 &    2.93 &    3.9  \\
\hline
\end{tabular} }\hfill%
\makebox{\centering \begin{tabular}{rrr}
\hline
   $H^2$ &   $H^1$ &   $L^2$ \\
\hline
    3.17 &    4.08 &    5.35 \\
    3.2  &    4.17 &    5.26 \\
    3.36 &    4.42 &    5.46 \\
    2.87 &    3.8  &    2.23 \\
\hline
\end{tabular} }\hfill%
\makebox{\centering \begin{tabular}{rrr}
\hline
   $H^2$ &   $H^1$ &   $L^2$ \\
\hline
    3.81 &    4.7  &    5.44 \\
    4.2  &    5.17 &    5.67 \\
    4.42 &    5.38 &    1.75 \\
\hline
\end{tabular} }\iftoggle{author}{}{\\[1em]}
\caption{Problem 2: experimental orders of convergence on the simplex grids
(top row) and the Voronoi grids (bottom row) for $m=2$, $3$, $4$, $5$ (left to right). }
\label{tab:p2eoc}
\end{table}

\emph{Problem 3: advection problem.}
As a final test problem, we consider a variation of Problem 1 inspired by another
test case studied in \cite{smears2013}.
Compared to the setting of Problem 1, we consider different coefficients $b$ and $c$, now choosing $b=(0,\frac1\delta)$ and $c=(\frac\pi\delta)^2$ for some constant $\delta > 0$, independently of $(\theta, R) \in \Lambda_1$, and a different function $f_0$ chosen so that the exact solution is given by
\begin{equation*}
  u(x_1,x_2) = 5x_1(1-x_1)\left(x_2 - \frac{\exp(x_2/\delta)-1}{\exp(1/\delta)-1}\right).
\end{equation*}

Again, we choose $\gamma^{\alpha, \beta} = \gamma^\alpha = \gamma^{(\theta, R)}$ as in \eqref{eq:gamma}. Following \cite[Example~1]{smears2014}, we fix $\lambda = \frac{8}{7} c = \frac{8}{7} (\frac{\pi}{\delta})^2$. One may compute that the optimal $\kappa$ for such $\lambda$ is $\kappa \approx 0.044$, independently of~$\delta$.

Results for Problem 3 with $\delta=0.1$ are shown in Figure~\ref{fig:p5lllother} and
Table~\ref{tab:p5eoc}. As before we observe convergence rates close to $m-1$
for the $H^2$ error. The rates for the $H^1$ and
$L^2$ errors is not quite so clear for this problem, not always reaching the orders
$m$ and $m+1$ in $H^1$ and $L^2$, respectively, obtained in the previous examples.
We will investigate a much smaller value for $\delta$, i.e., a more
strongly pronounced boundary layer in the final part of our investigations.

\begin{figure}[ht!]
\centering
\resizebox{0.32\textwidth}{!}{\input{results/A5b/eps/vem0A5-0.1simplexErrH2}}
  \resizebox{0.32\textwidth}{!}{\input{results/A5b/eps/vem0A5-0.1simplexErrH1}}
  \resizebox{0.32\textwidth}{!}{\input{results/A5b/eps/vem0A5-0.1simplexErrL2}}

\resizebox{0.32\textwidth}{!}{\input{results/A5b/eps/vem0A5-0.1voronoiErrH2}}
  \resizebox{0.32\textwidth}{!}{\input{results/A5b/eps/vem0A5-0.1voronoiErrH1}}
  \resizebox{0.32\textwidth}{!}{\input{results/A5b/eps/vem0A5-0.1voronoiErrL2}}

\caption{Problem 3 with $\delta=0.1$: $H^2$, $H^1$, and $L^2$
         errors (left to right) on the simplex grids (top) and
         Voronoi grids (bottom)
         for the reference method VEM(m,m,m).}
\label{fig:p5lllother}
\end{figure}
\begin{table}
\centering
\makebox{\centering \begin{tabular}{rrr}
\hline
   $H^2$ &   $H^1$ &   $L^2$ \\
\hline
    0.74 &    1.21 &    1.5  \\
    0.82 &    1.43 &    1.79 \\
    1.17 &    1.68 &    1.75 \\
    1.27 &    2    &    2.07 \\
\hline
\end{tabular} }\hfill%
\makebox{\centering \begin{tabular}{rrr}
\hline
   $H^2$ &   $H^1$ &   $L^2$ \\
\hline
    1.2  &    2.18 &    3.11 \\
    1.41 &    2.44 &    3.26 \\
    1.75 &    2.8  &    3.29 \\
\hline
\end{tabular} }\hfill%
\makebox{\centering \begin{tabular}{rrr}
\hline
   $H^2$ &   $H^1$ &   $L^2$ \\
\hline
    2.3  &    3.12 &    4.06 \\
    2.42 &    3.24 &    4.17 \\
    2.54 &    3.45 &    4.43 \\
\hline
\end{tabular} }\hfill%
\makebox{\centering \begin{tabular}{rrr}
\hline
   $H^2$ &   $H^1$ &   $L^2$ \\
\hline
    3.55 &    4.48 &    5.5  \\
    3.87 &    4.82 &    5.81 \\
\hline
\end{tabular} }\\[1em]
\makebox{\centering \begin{tabular}{rrr}
\hline
   $H^2$ &   $H^1$ &   $L^2$ \\
\hline
    0.66 &    0.98 &    1.4  \\
    0.67 &    1.02 &    1.46 \\
    0.84 &    1.25 &    1.65 \\
    0.93 &    1.44 &    1.87 \\
\hline
\end{tabular} }\hfill%
\makebox{\centering \begin{tabular}{rrr}
\hline
   $H^2$ &   $H^1$ &   $L^2$ \\
\hline
    1.98 &    2.8  &    3.54 \\
    1.87 &    2.62 &    3.18 \\
    2.32 &    3.13 &    3.59 \\
\hline
\end{tabular} }\hfill%
\makebox{\centering \begin{tabular}{rrr}
\hline
   $H^2$ &   $H^1$ &   $L^2$ \\
\hline
    3.19 &    3.84 &    4.45 \\
    3.27 &    3.98 &    4.81 \\
    3.18 &    4.18 &    5.2  \\
\hline
\end{tabular} }\hfill%
\makebox{\centering \begin{tabular}{rrr}
\hline
   $H^2$ &   $H^1$ &   $L^2$ \\
\hline
    4.26 &    4.83 &    5.69 \\
    3.77 &    4.45 &    5.33 \\
\hline
\end{tabular} }\iftoggle{author}{}{\\[1em]}
\caption{Problem 3: experimental orders of convergence on the simplex grids
(top row) and the Voronoi grids (bottom row) for $m=2$, $3$, $4$, $5$ (left to right). }
\label{tab:p5eoc}
\end{table}

In the following sections, we investigate the other variants of the method
discussed in this paper. Since results for all problems are very similar,
we focus only on Problem 2. Also, as only a marginal difference in
performance was observed for all methods between the simplex and the Voronoi grids, we only report results relative to  the former grid sequence.
To make comparison between the methods
simpler, we treat the method VEM(m,m,m) discussed so far
as a reference method and include its errors in each plot.

\subsection{Comparison with lower order virtual element projections}

In this section, we compare the VEM(m,m,m) with the other choices of polynomial projection degrees introduced in section~\ref{sec:scheme} and~\ref{subsec:scheme_alt1}.

We denote by VEM(m-1,m-1,m-1) (respectively, VEM(m-2,m-2,m-2)) the method from Section~\ref{sec:scheme} with $j=1$ (respectively, $j=2$), and VEM(m,m-1,m-2) the one from Section~\ref{subsec:scheme_alt1}.

The results obtained with the simplex grid sequence are summarised in Figure~\ref{fig:p2vemprojectionsother}.
Overall little difference in the $H^2$ errors can be observed expect when using
the method VEM(m,m-1,m-2), and even in this case a significant
difference only occurs with $m=2$ (left row of Figure~\ref{fig:p2vemprojectionsother}).
The difference in error here is possibly due
to the higher stabilization constant used for this method due to the
additional factor $\kappa^{-1}$ as prediced by the analysis. As before, we also include the $H^1$ and $L^2$ errors.
Again, errors are similar for all methods except VEM(m,m-1,m-2) with $m=2$. In addition, some difference can be seen
in the $L^2$ norm for $m=3$ where the other equal  projection methods seem to outperform
the reference method. Convergence rates appear slightly higher on
coarser grids in this case. The reason for this will require further
investigation.

\begin{figure}[ht!]
\centering
\resizebox{0.3\textwidth}{!}{\input{results/A2/eps/H2A2simplexEOC_vem1_vem0}}
  \resizebox{0.3\textwidth}{!}{\input{results/A2/eps/H1A2simplexEOC_vem1_vem0}}
  \resizebox{0.3\textwidth}{!}{\input{results/A2/eps/L2A2simplexEOC_vem1_vem0}}

\resizebox{0.3\textwidth}{!}{\input{results/A2/eps/H2A2simplexEOC_vem2_vem0}}
  \resizebox{0.3\textwidth}{!}{\input{results/A2/eps/H1A2simplexEOC_vem2_vem0}}
  \resizebox{0.3\textwidth}{!}{\input{results/A2/eps/L2A2simplexEOC_vem2_vem0}}

\resizebox{0.3\textwidth}{!}{\input{results/A2/eps/H2A2simplexEOC_vem-1_vem0}}
  \resizebox{0.3\textwidth}{!}{\input{results/A2/eps/H1A2simplexEOC_vem-1_vem0}}
  \resizebox{0.3\textwidth}{!}{\input{results/A2/eps/L2A2simplexEOC_vem-1_vem0}}
\caption{Problem 2: comparison between the reference method and virtual element approaches using lower order projections
         on simplex grids. From left to right:
         $H^2$ error, $H^1$ error, $L^2$ error.
         }
\label{fig:p2vemprojectionsother}
\end{figure}

\subsection{VEM with weakly imposed boundary conditions}
We repeat the same experiments as in the previous section comparing, this time, the reference method VEM(m,m,m) (with strongly imposed boundary conditions) and the same method but with weakly imposed boundary conditions (i.e.\ the method from Section~\ref{subsec:nitsche} with $j=0$), which we name VEM-Weak(m,m,m). 

Results for the errors in the $H^2$, $H^1$, and $L^2$ (semi)norms are shown in
Figure~\ref{fig:p2vemprojectionsweak}.
Errors are indistinguishable between the method with
weakly and strongly enforced boundary conditions, thus showing that weak imposition provides an effective and computationally simpler alternative. This holds for both the
simplex and the Voronoi grids and for all norms tested.

\begin{figure}[ht!]
\centering
  \resizebox{0.3\textwidth}{!}{\input{results/A2/eps/H2A2simplexEOC_vembnd0_vem0}}
  \resizebox{0.3\textwidth}{!}{\input{results/A2/eps/H1A2simplexEOC_vembnd0_vem0}}
  \resizebox{0.3\textwidth}{!}{\input{results/A2/eps/L2A2simplexEOC_vembnd0_vem0}}

\caption{Problem 2: comparison of reference method on the simplex grids with VEM using weakly imposed
         boundary conditions.
         From left to right: $H^2$ error, $H^1$ error, $L^2$ error.}
\label{fig:p2vemprojectionsweak}.
\end{figure}

\subsection{Comparison with a Discontinuous Galerkin method}
We conclude our numerical investigation of the convergence of our methods comparing the reference method VEM(m,m,m)
with the DG method studied in \cite{smears2014}.
Numerical results for Problem 2 discretized by simplex grids are reported in  Figure~\ref{fig:p2vemprojectionsdgsimplex}.
The two methods are expected to deliver the same convergence rates, and this fact is confirmed by our results.
Comparing the two methods with the same mesh level and polynomial order, we observe very similar performance, especially for $m=4$ and $m=5$, with the DG method having a slight edge in some cases.
However, in terms of number of degrees of freedom, the VEM method is, with the exception of the $H^2$ error in the case $m=3$, always more efficient, with the biggest difference for $m=2$.
The advantage of the VEM methods is due to the lower number of
degrees of freedom they employ.
The DG method requires $\frac12(m+1)(m+2)$
degrees of freedom per element; the number of degrees of freedom for the
VEM method and the grid sizes are summarized in Table~\ref{tab:gridsize}.
For example, on the level 4 simplex grid the VEM spaces with orders
$m=2$, $3$, $4$, $5$ require $3267$, $6403$, $14723$, $25091$ degrees of freedom, respectively,
while the size of the corresponding DG spaces are $12288$, $20480$, $30720$, $43008$.


\begin{figure}[ht!]
\centering
  \resizebox{0.3\textwidth}{!}{\input{results/A2/eps/H2A2simplexEOC_dg_vem0}}
  \resizebox{0.3\textwidth}{!}{\input{results/A2/eps/H1A2simplexEOC_dg_vem0}}
  \resizebox{0.3\textwidth}{!}{\input{results/A2/eps/L2A2simplexEOC_dg_vem0}}

\caption{Problem 2: comparison between the reference method and a DG method, on simplex grids.
         From left to right: $H^2$ error, $H^1$ error, $L^2$ error.}
\label{fig:p2vemprojectionsdgsimplex}.
\end{figure}

\subsection{Convergence of Howard's algorithm}
We finally investigate the convergence properties of Howard's
algorithm for the different methods, grid resolutions, and polynomial
orders. The results obtained with simplex grids are shown in
Figure~\ref{fig:p2iterations}.
We concentrate on the methods VEM(m,m,m) and VEM-Weak(m,m,m) (the two left columns) and also show, for comparison, results with the DG method (rightmost column).
The three methods show very similar convergence properties overall. In the lowest-order case $m=2$, they  require between 2 and 6 iterations depending on grid level, with a slight advantage for the DG method in a couple of cases. For $m>2$, performance is still similar across methods, with convergence reached in 3--6  steps except on the finest grids where 7 or 8 iterations are sometimes
required. The convergence is overall superlinear with the exception of
the results on the finest grids when $m=4$ or $m=5$. This could be due to
stability issues caused by the computation of the projection operators in
these cases.

\begin{figure}[ht!]
\centering
\resizebox{0.32\textwidth}{!}{\input{results/A2/eps/A2IterVEM0simplex2.tex}}
    \resizebox{0.32\textwidth}{!}{\input{results/A2/eps/A2IterVEMbnd0simplex2.tex}}
    \resizebox{0.32\textwidth}{!}{\input{results/A2/eps/A2IterDGsimplex2.tex}}

\resizebox{0.32\textwidth}{!}{\input{results/A2/eps/A2IterVEM0simplex3.tex}}
    \resizebox{0.32\textwidth}{!}{\input{results/A2/eps/A2IterVEMbnd0simplex3.tex}}
    \resizebox{0.32\textwidth}{!}{\input{results/A2/eps/A2IterDGsimplex3.tex}}

\resizebox{0.32\textwidth}{!}{\input{results/A2/eps/A2IterVEM0simplex4.tex}}
    \resizebox{0.32\textwidth}{!}{\input{results/A2/eps/A2IterVEMbnd0simplex4.tex}}
    \resizebox{0.32\textwidth}{!}{\input{results/A2/eps/A2IterDGsimplex4.tex}}

\resizebox{0.32\textwidth}{!}{\input{results/A2/eps/A2IterVEM0simplex5.tex}}
    \resizebox{0.32\textwidth}{!}{\input{results/A2/eps/A2IterVEMbnd0simplex5.tex}}
    \resizebox{0.32\textwidth}{!}{\input{results/A2/eps/A2IterDGsimplex5.tex}}

\caption{Problem 2: number of Howard's algorithm iterations for degrees
$m=2$, $3$, $4$, $5$ (top to bottom). Results are shown using the simplex grid
sequence with the reference method VEM(m,m,m), its counterpart with weak imposition of the boundary condition VEM-Weak(m,m,m), and the DG method (left to right).}
\label{fig:p2iterations}
\end{figure}

\subsection{Advection problem 3}
Finally, we compare the different methods in the setting of Problem 3 for different values of $\delta$. We test the virtual element method with $m=4$ and the same sequence of simplex grids as in the previous tests. Note,
in particular, that  we do not refine towards the top boundary to resolve the layer. We note that Howard's
algorithm converged in less than 5 iterations in all cases reported below with final residual errors of around $10^{-11}$.

As in the previous example, we investigate the $H^2$, $H^1$, and $L^2$ errors obtained by the different virtual element methods and the DG method with respect to number of
degrees of freedom. Results are summarized in Figure~\ref{fig:p3boundarylayer}.
Those obtained with the virtual element method with weak boundary
conditions are omitted since they are indistinguishable from the
ones relying on the strong imposition.
The reference method VEM(m,m,m) is always the most efficient. Compared to the DG method,
it achieves similar accuracy on any given grid in spite of using a considerably smaller number of
degrees of freedom. In contrast to the previous examples, using lower order projections leads to a significant increase in the error.
This is especially true for the method VEM(m-2,m-2,m-2). 
The method matching more closely the accuracy of the reference method is
VEM(m,m-1,m-2). However, in the case  $\delta=0.1$, the convergence rate of VEM(m,m-1,m-2) seems to degrade on finer grids.
The convergence of all methods on coarser grids is lower when  $\delta=0.0125$ than when
$\delta=0.1$. This issue is expected and due to the under resolved boundary layer; on
the finer grids convergence is again similar to what we observed for larger values of $\delta$.
\begin{figure}[ht!]
\centering
  \resizebox{0.3\textwidth}{!}{\input{results/A5b/eps/H2A5-1simplexEOC}}
  \resizebox{0.3\textwidth}{!}{\input{results/A5b/eps/H1A5-1simplexEOC}}
  \resizebox{0.3\textwidth}{!}{\input{results/A5b/eps/L2A5-1simplexEOC}}

  \resizebox{0.3\textwidth}{!}{\input{results/A5b/eps/H2A5-0125simplexEOC}}
  \resizebox{0.3\textwidth}{!}{\input{results/A5b/eps/H1A5-0125simplexEOC}}
  \resizebox{0.3\textwidth}{!}{\input{results/A5b/eps/L2A5-0125simplexEOC}}

\caption{Problem 3: advection problem computed on the given simplex grid with $m=4$ using from (top to bottom)
         $\delta=0.1,0.0125$. From left to right:
         $H^2$ error, $H^1$ error, $L^2$ error.}
\label{fig:p3boundarylayer}.
\end{figure}

We conclude our investigations by plotting the
resulting values of the optimization parameter $\alpha$ as
computed on level 2 and 4 of the simplex grid.
Figure~\ref{fig:p3alpha0.1} shows $\alpha$ in the case $\delta=0.1$.
\foreach \x in {0.1}{
\begin{figure}[ht!]
  \centering
  \includegraphics[width=0.19\textwidth]{results/A5b/kappaVEM_simplex_5A5-\x _8_4_0Alpha.png}
  \includegraphics[width=0.19\textwidth]{results/A5b/kappaVEM_simplex_5A5-\x _8_4_1Alpha.png}
  \includegraphics[width=0.19\textwidth]{results/A5b/kappaVEM_simplex_5A5-\x _8_4_2Alpha.png}
  \includegraphics[width=0.19\textwidth]{results/A5b/kappaVEM_simplex_5A5-\x _8_4_-1Alpha.png}
  \includegraphics[width=0.19\textwidth]{results/A5b/kappaDG_simplex_5A5-\x _8_4_0Alpha.png}

  \includegraphics[width=0.19\textwidth]{results/A5b/kappaVEM_simplex_5A5-\x _32_4_0Alpha.png}
  \includegraphics[width=0.19\textwidth]{results/A5b/kappaVEM_simplex_5A5-\x _32_4_1Alpha.png}
  \includegraphics[width=0.19\textwidth]{results/A5b/kappaVEM_simplex_5A5-\x _32_4_2Alpha.png}
  \includegraphics[width=0.19\textwidth]{results/A5b/kappaVEM_simplex_5A5-\x _32_4_-1Alpha.png}
  \includegraphics[width=0.19\textwidth]{results/A5b/kappaDG_simplex_5A5-\x _32_4_0Alpha.png}

\caption{Problem 3: values of $|(\theta,\omega)|$ on the level 2 (top row) and the
         finest level (bottom row) of the simplex grid sequence $m=4$ using $\delta=\x$.
         From left to right: VEM(m,m,m), VEM(m-1,m-1,m-1),
         VEM(m-2,m-2,m-2), VEM(m,m-1,m-2), and DG.
         Recall that $\alpha=(\theta,R(\omega))$ with $R(\omega)$
         describing rotation with angle $\omega$.}
\label{fig:p3alpha\x}
\end{figure}
}
While results look very similar on the finer grid (bottom row), differences between methods
are clearly visible on the coarser grid underlining the difference in
errors observed above. Figure~\ref{fig:p3alpha0.0125} shows results corresponding to
$\delta=0.0125$. Here we clearly see differences between the methods  even on the finer grids
(bottom row). Especially results with VEM(m-2,m-2,m-2) (middle plots) show considerably
more oscillations compared to all other methods.

\foreach \x in {0.0125}{
\begin{figure}[ht!]
  \centering
  \includegraphics[width=0.19\textwidth]{results/A5b/kappaVEM_simplex_5A5-\x _8_4_0Alpha.png}
  \includegraphics[width=0.19\textwidth]{results/A5b/kappaVEM_simplex_5A5-\x _8_4_1Alpha.png}
  \includegraphics[width=0.19\textwidth]{results/A5b/kappaVEM_simplex_5A5-\x _8_4_2Alpha.png}
  \includegraphics[width=0.19\textwidth]{results/A5b/kappaVEM_simplex_5A5-\x _8_4_-1Alpha.png}
  \includegraphics[width=0.19\textwidth]{results/A5b/kappaDG_simplex_5A5-\x _8_4_0Alpha.png}

  \includegraphics[width=0.19\textwidth]{results/A5b/kappaVEM_simplex_5A5-\x _32_4_0Alpha.png}
  \includegraphics[width=0.19\textwidth]{results/A5b/kappaVEM_simplex_5A5-\x _32_4_1Alpha.png}
  \includegraphics[width=0.19\textwidth]{results/A5b/kappaVEM_simplex_5A5-\x _32_4_2Alpha.png}
  \includegraphics[width=0.19\textwidth]{results/A5b/kappaVEM_simplex_5A5-\x _32_4_-1Alpha.png}
  \includegraphics[width=0.19\textwidth]{results/A5b/kappaDG_simplex_5A5-\x _32_4_0Alpha.png}

\caption{Problem 3: values of $|(\theta,\omega)|$ on the level 2 (top row) and the
         finest level (bottom row) of the simplex grid sequence $m=4$ using $\delta=\x$.
         From left to right: VEM(m,m,m), VEM(m-1,m-1,m-1),
         VEM(m-2,m-2,m-2), VEM(m,m-1,m-2), and DG.
         Recall that $\alpha=(\theta,R(\omega))$ with $R(\omega)$
         describing rotation with angle $\omega$.}
\label{fig:p3alpha\x}
\end{figure}
}
\FloatBarrier

\section*{Acknowledgments}

We are indebted to I.~Smears and E.~Süli for discussions regarding the effect of quadrature in the convergence of policy iterations~\cite{hall2026}.

We also acknowledge partial support from the following funding agencies.
GB: NSF grant DMS-1908267.
AC:  European Union - Horizon Europe  grant  EuroHPC JU - 101172493. AC also acknowledges membership of INdAM Research group GNCS.
RHN: NSF grants DMS-1908267 and DMS-2512392.

\bibliographystyle{plain}
\bibliography{references}
\end{document}